\pgfplotsset{compat=1.10}
\definecolor{mylightblue}{RGB}{153,204,255}
\newtheorem{thm}{Theorem}[section]
\theoremstyle{definition}
\newtheorem{dfn}{Definition}[section]
\newtheorem{note}{Note}[]
\newtheorem{lem}{Lemma}[section]
\newtheorem{example}{Example}[section]
\newtheorem{rmrk}{Remark}[]
\newcommand{\thickhline}{%
	\noalign {\ifnum 0=`}\fi \hrule height 1pt
	\futurelet \reserved@a \@xhline
}
\journal{Information Sciences}
\begin{document}
\begin{frontmatter}
\title{\textbf{Generalized-Hukuhara Subdifferential Analysis and Its Application in  Nonconvex Composite Optimization Problems with Interval-valued Functions}}
		\author[iitbhu_math]{Anshika}
		\ead{anshika.rs.mat19@itbhu.ac.in}
		\author[iitbhu_math]{Debdas Ghosh}
		\ead{debdas.mat@iitbhu.ac.in}
		\author[iitbhu_math]{Ram Surat Chauhan}  
		\ead{rschauhan.rs.mat16@itbhu.ac.in} 
		\author[radko,radko2]{Radko Mesiar\corref{cor1}}
        \ead{mesiar@math.sk}
		\address[iitbhu_math]{Department of Mathematical Sciences,  Indian Institute of Technology (BHU) Varanasi \\ Uttar Pradesh--221005, India}
		\address[radko]{Faculty of Civil Engineering, Slovak University of Technology, Radlinsk\'{e}ho 11, 810 05, Bratislava}
        \address[radko2]{Palack\'y University Olomouc, Faculty of Science, Department of Algebra and Geometry, 17. listopadu 12, 771 46 Olomouc, Czech Republic}		
	    \cortext[cor1]{Corresponding author}

		\begin{abstract}
		In this article, we study  $gH$-subdifferential calculus of convex interval-valued functions (IVFs) and apply it in a nonconvex composite model of interval optimization problems (IOPs). It is found that the $gH$-directional derivative of maximum of finitely many comparable IVFs is the maximum of their $gH$-directional derivative. Proposed concepts of $gH$-subdifferential are observed to be useful to derive  Fritz-John-type and KKT-type efficiency conditions for weak efficient solutions of IOPs. Further, we extract a necessary and sufficient condition to characterize the weak efficient solutions  of nonconvex composite IOPs by applying the proposed concepts. To derive the results on $gH$-subdifferentials, the concepts of limit supremum and limit infimum with certain properties for IVFs are defined in the sequel. The whole analysis is supported by appropriate expository examples. 
		\end{abstract}

		\begin{keyword}
			Interval-valued functions\sep Interval optimization\sep Efficient solution\sep Fritz-John-type condition\sep KKT-type condition \sep Nonconvex composite model.
			\\ \vspace{0.7cm}
			AMS Mathematics Subject Classification (2010): 90C30 $\cdot$ 65K05
		\end{keyword}
		\end{frontmatter}

\section{Introduction}
Nonconvex optimization problems are of great interest while modeling the problems in applied mathematics and operations research. A vast majority of machine learning algorithms train their models by solving optimization problems in which the designed objective is nonconvex. However, due to randomness and imprecision in real-world phenomenon and the uncertainty of the given data, these problems are often modeled by optimization problems whose objective functions is interval-valued. These are represented as interval optimization problems (IOPs). This emerges as an important research topic due to the implicit presence of indefinite and undetermined events in different real-world situations and it is engrossing from the last two decades.
\subsection{Literature survey}
Sunaga \cite{sunaga} has discussed the fundamental of interval analysis to manoeuvre interval uncertainity that emerges in mathematical or computer models by the help of matrix computations using interval arithmetic. However, these results attracted substantial recognition after the advent of the first book by Moore \cite{Moore1966} on interval analysis. It is evident that the interval arithmetic in \cite{Moore1966,Moore1979} has a downside of nonexistence of the additive inverse of a non-degenerate interval (whose upper and lower limits are not equal). That is, for a non-degenerate interval $\textbf{X}$, there is no existence of an interval $\textbf{Y}$ such that $\textbf{X} \oplus \textbf{Y} = \textbf{0}$. To annihilate this difficulty, a new rule for the difference of compact intervals was given by Hukuhara \cite{hukuhara}, known as the Hukuhara difference of intervals ($H$-difference). Although $H$-difference provides the additive inverse of compact intervals, $H$-difference of a compact interval $\textbf{Y}$ from a compact interval $\textbf{X}$ can be evaluated if the width of $\textbf{X}$ is considered to be greater than or equal to that of $\textbf{Y}$.  To resolve these difficulties, Stefanini and Bede \cite{stefanini2009} introduced the generalized Hukuhara difference ($gH$-difference) of intervals. The $gH$-difference not only provides an additive inverse of any compact interval but also applicable for all pairs of compact intervals.\\

To observe the properties of an IVF, calculus plays an essential role. Initially, Hukuhara  \cite{hukuhara} established the concept of differentiability of IVFs with the help of $H$-difference of intervals. However, the definition of Hukuhara differentiability ($H$-differentiability) is found to be restrictive (see \cite{chalco2013}). Despite of that, Wu \cite{wu2007karush} illustrated the idea of limit, continuity, and differentiability, with the help of the Hausdorff metric for intervals. To overcome the deficiency of $H$-differentiability, Markov \cite{markov} introduced the idea of a nonstandard subtraction and developed the calculus of intervals by using this difference. Thereafter, Stefanini and Bede \cite{stefanini2009} refined the concept of $H$-difference for any pair of intervals with the introduction of $gH$-difference, which corresponds with the nonstandard subtraction operator \cite{markov}. Moreover, Chalco-Cano et al. \cite{chalco2013} developed the $gH$-differentiability by using $gH$-difference of intervals for IVFs. Recently, Ghosh et al. \cite{Ghosh2019derivative} have proposed the idea of $gH$-directional derivative for IVFs. Also, in \cite{Ghosh2019gradient} the concept of $gH$-subgradient and $gH$-subdifferential that is equipped with a linearity concept of IVFs is illustrated.\\

With the help of the developed calculus for IVFs, many researchers have developed optimality conditions for IOPs. Wu \cite{wu2007karush} derived KKT relations for IOPs with the help of $H$-difference of intervals. Chalco-Cano et al. \cite{chalco2013} presented KKT optimality results for IOPs, by applying generalized derivatives. In \cite{Ishibuchi1990}, a technique to solve a linear IOP was proposed. Singh et al. \cite{singh} employ  the $gH$-derivative  \cite{wu2007karush} and the partial ordering to intervals \cite{dsingh} to formulate KKT relations for IOPs by considering the sum of lower and upper functions. Ghosh et al. \cite{Ghosh2019kkt} proposed the KKT results for constrained and unconstrained IOPs by observing the geometrical significance of the solutions. Recently, Ghosh \cite{Ghosh2016newton} analyzed the notion of $gH$-differentiability of IVFs and proposed a Newton-type method for IOPs. Many authors have also reported different solution concepts for IOPs; for instance, see \cite{burjee2012, Ghosh2017quasi, Ghosh2018saadle, liu2007, zhou} and the references therein.\\

 Despite many attempts to develop the calculus of nonsmooth IVFs, the existing ideas are not adequate to derive the optimality conditions of nonsmooth IOPs. Although some researchers \cite{jayswal,sunwang} introduced a concept of \emph{LU} optimal solution for nondifferentiable interval-programming problems with invex IVFs, the KKT theory for IOPs in \cite{jayswal, sunwang} are very restrictive. Even very simple IVFs do not follow those results (see Note \ref{luoptimal}). This article thus attempts to derive Fritz-John-type and KKT-type efficiency conditions for nonsmooth IOPs that do not involve any invexity assumption and widely applicable. We also try to derive the weak efficient solution of IOPs that observes the comparability of functions. In the sequel, to observe the role of developed calculus in nonconvex nonsmooth IOPs, we propose a condition by which a weak efficient solution of a nonconvex model can be obtained.

\subsection{Motivation and contribution of the paper}

From the literature of IOPs, it is observed that the optimality condition for nonsmooth IOPs is yet to be developed. However, the concept of subgradients and subdifferential inevitably arise. Thus, in this article, we, at first, briefly propose $gH$-subdifferential calculus for IVFs. The $gH$-subgradient of convex functions is important for nonsmooths IVFs and in dealing with variational problems. We, importantly, report how to calculate the subgradient of the maximum of a finite number of convex IVFs in terms of the subgradient of the IVFs. In the sequel, we derive a Fermat-type, a Fritz-John-type and a KKT-type condition for (weak) efficient solutions of nonsmooth IOPs. The proposed subdifferential calculus can be applied in various areas of nonsmooth IOPs including nonconvex nonsmooth optimization \cite{bagirov,kiwi}, DC (difference of convex functions) programming problems \cite{sun}, etc. In this article, we apply the proposed study on a nonconvex composite IOP.

\subsection{Delineation}
	The proposed work is organised as follows. Section \ref{section2} covers fundamentals of interval arithmetic followed by convexity and calculus of IVFs. In the section \ref{section3}, we define the notions of infimum and supremum for IVF, and the concept of closedness and boundedness for a set of intervals along with the $gH$-directional derivative of maximum of functions. In the same section, we derive a Fritz-John-type condition for IOPs which helps in developing the KKT-type optimality condition for IOPs. After that, the next section \ref{section4} is devoted to the application of $gH$-subdifferentials in obtaining the efficient solutions of a nonconvex composite model in IOPs. Finally the last section is concerned with future directions for our research.


	\section{\textbf{Preliminaries and terminologies}}\label{section2}
	
	\noindent This section is devoted to some basic nomenclature on intervals along with the convexity and calculus of IVFs. In this article, we use the following notations. 
	
	\begin{itemize}
	\item $\mathbb{R}$ represents the set of real numbers
    \item $\mathbb{R}_+$ represents the set of nonnegative real numbers
    \item $I(\mathbb{R})$ represents the set of all closed and bounded intervals
    \item The elements of $I(\mathbb{R})^n$ are  denoted by $\widehat{\textbf X},~\widehat{\textbf Y},~\widehat{\textbf Z},\ldots$
    \item $\textbf{0}$ represents the interval $[0,0]$
    \item $\overline{I(\mathbb{R})}=I(\mathbb{R})\cup \{\boldsymbol{-\infty},\boldsymbol{+\infty}\}$.
    \end{itemize}

	\subsection{Arithmetic of intervals and their dominance relation}\label{ssai}
		In this article, the elements of $I(\mathbb{R})$ is denoted by bold capital letters: ${\textbf X}, {\textbf Y}, {\textbf Z}, \ldots $. An element $\textbf{Y}$ of $I(\mathbb{R})$ in its interval form is represented by the corresponding small letter: $ \textbf{Y} = [\underline{y}, \overline{y}]$, where $\underline{y}~\text{and}~\overline{y}~\text{are real numbers such that}~\underline{y} \leq \overline{y}.$\\
		
Let $\textbf{Y}, \textbf{Z} \in I(\mathbb{R})$ and $\delta \in \mathbb{R}$. Moore's \cite{Moore1966,Moore1979} interval addition, subtraction, product, division and scalar multiplication are denoted by $\textbf{Y} \oplus \textbf{Z},~ \textbf{Y} \ominus \textbf{Z},~ \textbf{Y} \odot \textbf{Z},~ \textbf{Y} \oslash \textbf{Z}$, and $\delta \odot \textbf{Y}$, respectively. In defining $\textbf{Y} \oslash \textbf{Z}$, it is assumed that $0\notin \textbf{Z}$. \\

Since $\textbf{X}\ominus\textbf{X}\neq\textbf{0}$ for any nondegenerate interval $\textbf{X}$, we use the following concept of difference of intervals in this article.

	\begin{dfn}($gH$-difference of intervals \cite{stefanini2009}). The $gH$-difference of $\textbf{Y}$ and $\textbf{Z}\in I(\mathbb{R})$ is defined as the interval $\textbf{C}$, and denoted as $\textbf{Y}\ominus_{gH}\textbf{Z}$ such that
		\[\textbf{Y} =  \textbf{Z} \oplus  \textbf{C}~\text{or}~\textbf{Z} = \textbf{Y} \ominus \textbf{C}.\]
	
		For intervals $\textbf{Y} = \left[\underline{y},~\overline{y}\right]$ and $\textbf{Z} = \left[\underline{z},~\overline{z}\right]$,
		\[
		\textbf{Y} \ominus_{gH} \textbf{Z} = \left[\min\{\underline{y}-\underline{z},
		\overline{y} - \overline{z}\},~ \max\{\underline{y}-\underline{z}, \overline{y} -
		\overline{z}\}\right] \text{ and } \textbf{Y} \ominus_{gH} \textbf{Y} = \textbf{0}.
		\]
	\end{dfn}

	The algebraic operations on the product space $I(\mathbb{R})^n=I(\mathbb{R})\times I(\mathbb{R})\times \cdots  \times I(\mathbb{R})$ ($n$ times) are defined as follows.
	
	\begin{dfn} (Algebraic operations on $I(\mathbb{R})^n$ \cite{Ghosh2019gradient}). For two elements $\widehat{\textbf{Y}}=(\textbf{Y}_1,\textbf{Y}_2,\ldots,\textbf{Y}_n)$ and $\widehat{\textbf{Z}}=(\textbf{Z}_1,\textbf{Z}_2,\ldots,\textbf{Z}_n)$ of $I(\mathbb{R})^n$, an algebraic operation $\boldsymbol{\star}$ between $\widehat{\textbf{Y}}$ and $\widehat{\textbf{Z}}$, denoted by $\widehat{\textbf{Y}} \boldsymbol{\star}\widehat{\textbf{Z}}$, defined by
	\[
	\widehat{\textbf{Y}}\boldsymbol{\star}\widehat{\textbf{Z}}=(\textbf{Y}_1\boldsymbol{\star}\textbf{Z}_1,\textbf{Y}_2\boldsymbol{\star}\textbf{Z}_2,\ldots,\textbf{Y}_n\boldsymbol{\star}\textbf{Z}_n),
	\]
	where $\boldsymbol{\star} \in \{\oplus,\ominus,\ominus_{gH}\}$.
	
	\end{dfn}

		In the following, we provide a domination relation on intervals that is used throughout the paper. We remark that \emph{domination} in the following definition is  based on a \emph{minimization} type optimization problems: a \emph{smaller value} is  \emph{better}.

\begin{dfn}(Dominance of intervals \cite{wu2007karush}). Consider two intervals $\textbf{Y}$ and $\textbf{Z}$ in $I(\mathbb{R})$.
	 	\begin{enumerate}[(i)]
	 		\item $\textbf{Y}$ is called \emph{dominated} by $\textbf{Z}$ if `$\underline{z}~\leq~ \underline{y}$ and $\overline{z}~\leq~\overline{y}$', and then we write $\textbf{Z}~\preceq~ \textbf{Y}$;
	 		\item $\textbf{Y}$ is said to be \emph{strictly dominated} by $\textbf{Z}$ if either `$\underline{z} ~\leq~ \underline{y}$  and $\overline{z} ~<~ \overline{y}$' or `$\underline{z} ~<~ \underline{y}$  and $\overline{z} ~\leq~ \overline{y}$', and then we write $\textbf{Z}~\prec~ \textbf{Y}$;
	 		\item if $\textbf{Y}$ is not dominated by $\textbf{Z}$, then we write $\textbf{Z}~\npreceq~ \textbf{Y}$; if $\textbf{Y}$ is not strictly dominated by $\textbf{Z}$, then we write $\textbf{Z}~\nprec~ \textbf{Y}$;
	 		\item if $\textbf{Z}~\npreceq~ \textbf{Y}$ and $\textbf{Y}~\npreceq~ \textbf{Z}$, then it will be said that \emph{none of} $\textbf{Z}$ and $\textbf{Y}$ \emph{dominates the other}, or $\textbf{Z}$ and $\textbf{Y}$ \emph{are not comparable};
	 		\item if $`\textbf{Z}~\preceq~ \textbf{Y}$' or $`\textbf{Y}~\preceq~ \textbf{Z}$', then it will be said that $\textbf{Z}$ and $\textbf{Y}$ \emph{are comparable}.
	 	\end{enumerate}
	 \end{dfn}
	 
	 \begin{rmrk}
	 For any two elements $\widehat{\textbf{Y}}=(\textbf{Y}_1,\textbf{Y}_2,\ldots,\textbf{Y}_n)$ and $\widehat{\textbf{Z}}=(\textbf{Z}_1,\textbf{Z}_2,\ldots,\textbf{Z}_n)$ in $I(\mathbb{R})^n$,
	 \[\widehat{\textbf{Y}} \preceq \widehat{\textbf{Z}} \iff \textbf{Y}_j \preceq \textbf{Z}_j~\text{for all}~j=1, 2, \ldots, n.\]

\begin{dfn}(Set of comparable intervals). Let $\textbf{S}\subseteq I(\mathbb{R}).$ Then, $\textbf{S}$ is said to be a \emph{set of comparable intervals} if for every $\textbf{X},\textbf{Y}\in \textbf{S}$, either $\textbf{X}$ dominates $\textbf{Y}$ or $\textbf{Y}$ dominates $\textbf{X}$.

\end{dfn}

\end{rmrk}


	\subsection{Convexity and differential calculus of IVFs}
	A function $\textbf{T}$ from a nonempty subset $\mathcal{Y}$ of $\mathbb{R}^n$ to $I(\mathbb{R})$ is known as an IVF. For each $y \in \mathcal{Y}$,~ $\textbf{T}$ is presented by
	\[
	\textbf{T}(y)=\left[\underline{t}(y),\overline{t}(y)\right],
	\]
    where $\underline{t}$ and $\overline{t}$ are real-valued functions on $\mathcal{Y}$ such that $\underline{t}(y)\leq \bar{t}(y)$ for all $y \in \mathcal{Y}$.\\
    If $\mathcal{Y}$ is convex, then the IVF
		$\textbf{T}$ is said to be \emph{convex} on $\mathcal{Y}$ \cite{wu2007karush} if for any $y_1$ and $y_2 \in \mathcal{Y},\delta_1,~\delta_2\in[0,1]\text{ with }\delta_1+\delta_2=1$,
		\[
		\textbf{T}(\delta_1 y_1+\delta_2 y_2)\preceq
		\delta_1\odot\textbf{T}(y_1)\oplus\delta_2\odot\textbf{T}(y_2).\]

\noindent
	The IVF $\textbf{T}$ is said to be \emph{$gH$-continuous} \cite{Ghosh2016newton} at $\bar{y} \in \mathcal{Y}$ if
		\[
		\lim_{\lVert h \rVert\rightarrow 0}\left(\textbf{T}(\bar{y}+h)\ominus_{gH}\textbf{T}(\bar{y})\right)=\textbf{0}.\]
		 If $\textbf{T}$ is $gH$-continuous at each $y$ in $\mathcal{Y}$, then $\textbf{T}$ is said to be \emph{$gH$-continuous} on $\mathcal{Y}$.

	\begin{lem}(See \label{lc1}\cite{wu2007karush}).
		If an IVF $\textbf{T}$ is convex on a convex set $\mathcal{Y} \subseteq \mathbb{R}^n$, then $\underline{t}$
		and $\overline{t}$ are convex on $\mathcal{Y}$ and vice-versa.
	\end{lem}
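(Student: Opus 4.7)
The plan is to prove both implications simultaneously by unpacking the definition of interval convexity and exploiting the fact that the dominance relation $\preceq$ is literally defined as the conjunction of two endpoint inequalities. Since the convex-combination coefficients $\delta_1, \delta_2$ lie in $[0,1]$ (hence are nonnegative), all the intermediate interval expressions have a clean endpoint form, and the equivalence of definitions will be transparent.

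First, I would fix arbitrary $y_1, y_2 \in \mathcal{Y}$ and $\delta_1, \delta_2 \in [0,1]$ with $\delta_1 + \delta_2 = 1$. The two key computations are: (i) $\textbf{T}(\delta_1 y_1 + \delta_2 y_2) = \bigl[\underline{t}(\delta_1 y_1 + \delta_2 y_2),\ \overline{t}(\delta_1 y_1 + \delta_2 y_2)\bigr]$ by the very representation of $\textbf{T}$; and (ii) because $\delta_i \geq 0$, Moore's scalar multiplication yields $\delta_i \odot \textbf{T}(y_i) = [\delta_i \underline{t}(y_i),\ \delta_i \overline{t}(y_i)]$, so Moore's addition gives
\[
\delta_1 \odot \textbf{T}(y_1) \oplus \delta_2 \odot \textbf{T}(y_2) = \bigl[\delta_1 \underline{t}(y_1) + \delta_2 \underline{t}(y_2),\ \delta_1 \overline{t}(y_1) + \delta_2 \overline{t}(y_2)\bigr].
\]

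Next, I would apply the definition of the dominance relation $\preceq$ to the interval convexity inequality of $\textbf{T}$. By definition, $\textbf{T}(\delta_1 y_1 + \delta_2 y_2) \preceq \delta_1 \odot \textbf{T}(y_1) \oplus \delta_2 \odot \textbf{T}(y_2)$ holds if and only if both endpoint inequalities
\[
\underline{t}(\delta_1 y_1 + \delta_2 y_2) \leq \delta_1 \underline{t}(y_1) + \delta_2 \underline{t}(y_2) \quad \text{and} \quad \overline{t}(\delta_1 y_1 + \delta_2 y_2) \leq \delta_1 \overline{t}(y_1) + \delta_2 \overline{t}(y_2)
\]
are satisfied. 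But these are precisely the convexity conditions for the real-valued functions $\underline{t}$ and $\overline{t}$ on $\mathcal{Y}$. Since the chain of equivalences runs in both directions (no lossy step is used), both implications are obtained at once.

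There is essentially no obstacle here: the proof is a definition-chasing argument. The only point worth careful attention is the nonnegativity of $\delta_1, \delta_2$, which ensures that $\delta_i \odot \textbf{T}(y_i)$ retains the simple endpoint form $[\delta_i \underline{t}(y_i),\ \delta_i \overline{t}(y_i)]$; if a coefficient could be negative, the lower and upper endpoints would swap and the endpoint comparison would not directly yield convexity of $\underline{t}$ and $\overline{t}$ separately. Since the hypothesis restricts $\delta_1, \delta_2$ to $[0,1]$, this issue does not arise.
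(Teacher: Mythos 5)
Your proof is correct and is the standard definition-chasing argument for this result; the paper itself gives no proof (it simply cites Wu's 2007 paper), and your chain of equivalences — unpacking $\preceq$ into the two endpoint inequalities after noting that $\delta_i \geq 0$ preserves the endpoint order under scalar multiplication — is exactly the intended reasoning. Your remark about why nonnegativity of the coefficients matters is the one point of substance, and you handle it correctly.
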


	\begin{lem}(See \cite{Ghosh2019gradient}).
		If an IVF $\textbf{T}$ is $gH$-continuous on subset $\mathcal{Y}$ of $\mathbb{R}^n$, then
		$\underline{t}$ and $\overline{t}$ are continuous on $\mathcal{Y}$ and vice-versa.
	\end{lem}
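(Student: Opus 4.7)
The plan is to unfold the definition of $gH$-continuity and read off the conclusion directly from the explicit componentwise formula for the $gH$-difference. Fix $\bar{y}\in\mathcal{Y}$ and write
\[
\textbf{T}(\bar{y}+h)\ominus_{gH}\textbf{T}(\bar{y})=\bigl[\min\{a(h),b(h)\},\max\{a(h),b(h)\}\bigr],
\]
where $a(h):=\underline{t}(\bar{y}+h)-\underline{t}(\bar{y})$ and $b(h):=\overline{t}(\bar{y}+h)-\overline{t}(\bar{y})$. Since the limiting interval is prescribed to be $\textbf{0}=[0,0]$, the proof reduces to analyzing when $\min\{a(h),b(h)\}\to 0$ and $\max\{a(h),b(h)\}\to 0$ as $\lVert h\rVert\to 0$.

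For the forward direction, suppose $\textbf{T}$ is $gH$-continuous at $\bar{y}$. Then both $\min\{a(h),b(h)\}$ and $\max\{a(h),b(h)\}$ tend to $0$. Using the sandwich inequality $\min\{a(h),b(h)\}\le a(h),b(h)\le \max\{a(h),b(h)\}$, I conclude that $a(h)\to 0$ and $b(h)\to 0$, which is exactly continuity of $\underline{t}$ and $\overline{t}$ at $\bar{y}$. For the converse, if $\underline{t}$ and $\overline{t}$ are continuous at $\bar{y}$, then $a(h)\to 0$ and $b(h)\to 0$, so both $\min$ and $\max$ of $\{a(h),b(h)\}$ tend to $0$, giving that the $gH$-difference converges to $\textbf{0}$. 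Doing this at every $y\in\mathcal{Y}$ yields the statement on the whole set.

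There is essentially no serious obstacle: the result is a transparent consequence of the componentwise formula for $\ominus_{gH}$ together with the elementary fact that $\min\{a,b\}$ and $\max\{a,b\}$ tend to $0$ if and only if both $a$ and $b$ do. The only point that requires a line of care is justifying that ``$\lim$ of an interval equals $[0,0]$'' really is interpreted coordinatewise on the endpoints, which is immediate from the representation above.
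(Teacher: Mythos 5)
Your argument is correct. The paper itself states this lemma with a citation to \cite{Ghosh2019gradient} and gives no proof, so there is nothing to compare against line by line; your route --- unfolding the endpoint formula for $\ominus_{gH}$ and using the squeeze $\min\{a,b\}\le a,b\le\max\{a,b\}$ --- is the standard and expected one. The one point you flag, that convergence of intervals is read off the endpoints, is indeed justified by the paper's norm $\lVert \textbf{Y}\rVert_{I(\mathbb{R})}=\max\{|\underline{y}|,|\overline{y}|\}$, under which $\lim(\textbf{T}(\bar{y}+h)\ominus_{gH}\textbf{T}(\bar{y}))=\textbf{0}$ is exactly $\max\{|a(h)|,|b(h)|\}\to 0$, so your forward direction could even skip the sandwich step.
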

	
\begin{dfn}(Proper IVF). Let $\textbf{T}:\mathcal{Y}\to \overline{I(\mathbb{R})}$ be an extended IVF on a nonempty subset $\mathcal{Y}$ of $ \mathbb{R}^n$. Then, $\textbf{T}$ is called \emph{proper} if there exists a $\bar y\in\mathcal{Y}$ such that 
\[
\textbf{T}(\bar y)\prec \boldsymbol{+\infty} \text{ and } \boldsymbol{-\infty}\prec\textbf{T}(y)~\text{for all}~ y\in\mathcal{Y}.
\]
\end{dfn}

\begin{dfn}(Domain of an IVF). 	Let $\mathcal{Y}\subseteq \mathbb{R}^n$. For an extended IVF $\textbf{T}:\mathcal{Y}\to \overline{I(\mathbb{R})}$, the \emph{domain} of $\textbf{T}$, denoted as dom \textbf{T}, is defined as
\[\text{dom}~\textbf{T}=\{y \in \mathcal{Y}:\textbf{T}(y)\prec\boldsymbol{+\infty}\}.
\]

\end{dfn}

	\begin{dfn}($gH$-derivative \cite{chalco2012}).
	Let $\mathcal{Y}\subseteq I(\mathbb{R})$, then for an IVF $\textbf{T}: \mathcal{Y} \to I(\mathbb{R})$, the \emph{$gH$-derivative} at $\bar{y} \in \mathcal{Y}$ is defined by
	\[
	\textbf{T}'(\bar{y})=\underset{d \rightarrow 0}{\lim} \frac{\textbf{T}(\bar{y}+d)\ominus_{gH} \textbf{T}(\bar{y})}{d},~\text{provided the limit exists.}\]
	\end{dfn}
	

	\begin{dfn} ($gH$-partial derivative \cite{Ghosh2016newton}). Let $\textbf{T}: \mathcal{Y} \to I(\mathbb{R})$ be an IVF on a subset $\mathcal{Y}$ of $\mathbb{R}^n$. We define a function $\textbf{H}_j$ by
	\[
	\textbf{H}_j(y_j) = \textbf{T}(\bar{y}_1,\bar{y}_2,\ldots,\bar{y}_{j-1},y_j,\bar{y}_{j+1},\ldots,\bar{y}_n),\]
	where $\bar{y}=(\bar{y}_1,\bar{y}_2,\ldots,\bar{y}_n)^\top \in \mathcal{Y}.$ Let the generalized derivative of $\textbf{H}_j$ exists at $\bar{y}_j$, then the $j$-th \emph{$gH$-partial derivative} of \textbf{T} at $\bar{y},$ denoted as $D_j\textbf{T}(\bar{y})$, is defined as
	\[
	D_j\textbf{T}(\bar{y})= \textbf{H}'_j(\bar{y}_j)~\text{for all}~ j=1,2,\ldots,n.\]
	\end{dfn}
	

	\begin{dfn}($gH$-gradient \cite{Ghosh2016newton}).
	Let $\mathcal{Y}$ be a subset of $\mathbb{R}^n$, then the \emph{$gH$-gradient} of an IVF $\textbf{T}$ at a point $\bar{y} \in \mathcal{Y}$ is defined by the interval vector
	\[
	\nabla\textbf{T}(\bar{y})=(D_1\textbf{T}(\bar{y}),D_2\textbf{T}(\bar{y}),\ldots,D_n\textbf{T}(\bar{y}))^\top.\]
	The $gH$-gradient is denoted by $\nabla\textbf{T}(\bar{y})$.
	\end{dfn}

	
		\begin{dfn}\label{ddd}
		($gH$-directional derivative \cite{Ghosh2019derivative}).
		Let $\textbf{T}$ be an IVF on a subset $\mathcal{Y}$ of $\mathbb{R}^n$. Let $\bar{y} \in \mathcal{Y}$ and $h \in \mathbb{R}^n$ and
		\[
		\lim_{\delta \to 0+}\frac{1}{\delta}\odot\left(\textbf{T}(\bar{y}+\delta h)\ominus_{gH}\textbf{T}(\bar{y})\right)\text{ exists finitely. }
		\]
		 Then, the limit at $\bar{y}$ in the direction $h$ is said to be \emph{$gH$-directional derivative} of $\textbf{T}$, and it is denoted by $\textbf{T}_\mathscr{D}(\bar{y})(h)$.
	\end{dfn}

Next, we explore the relation of efficient solution of the IOP 
\begin{align}\label{arars}
		\displaystyle \min_{y \in \mathcal{Y}} \textbf{T}(y)
		,
		\end{align}
		where $\textbf{T}: \mathcal{Y} \rightarrow I(\mathbb{R})$ be an IVF on the nonempty subset $\mathcal{Y}$ of $\mathbb{R}^n$.

\begin{dfn}\label{efficient_point_def}(Efficient point \cite{Ghosh2019derivative}).
		A point $\bar{y}\in \mathcal{Y}$ is said to be an \emph{efficient point} of the IOP (\ref{arars})
		if $\textbf{T}(y)\nprec\textbf{T}(\bar{y})$ for all $y\in \mathcal{Y}.$
	\end{dfn}

\begin{dfn}(Weak efficient point). 
	A point $\bar{y}\in \mathcal{Y}$ is said to be a \emph{weak efficient point} of the IOP (\ref{arars})
		if $\textbf{T}(\bar{y})\preceq\textbf{T}(y)$ for all $y\in \mathcal{Y}.$
\end{dfn}


\subsection{Few properties of the elements in  $I(\mathbb{R})$ and $I(\mathbb{R})^n$}
  
 \begin{dfn} \label{norm} (Norm on $I(\mathbb{R})$ \cite{Moore1966}). For an interval $\textbf{Y} = \left[\underline{y}, \bar{y}\right]$ in $ I(\mathbb{R})$, the function ${\lVert . \rVert}_{I(\mathbb{R})}$ from $I(\mathbb{R})$ to $\mathbb{R}_+$, defined by
\[
{\lVert \textbf{Y} \rVert}_{I(\mathbb{R})} = \max \{|\underline{y}|, |\bar{y}|\}, 	
\]
is a \emph{norm} on $I(\mathbb{R})$. 
\end{dfn}

	\begin{dfn} \label{norm_2}
(Norm on $I(\mathbb{R})^n$ \cite{Moore1979}). For an element $\widehat{\textbf{Y}} = (\textbf{Y}_1,\textbf{Y}_2,\ldots,\textbf{Y}_n)$ in $I(\mathbb{R})^n$, the function ${\lVert . \rVert}_{I(\mathbb{R})^n}$ from $I(\mathbb{R})^n$ to $\mathbb{R}_+$, defined by
\[
{\lVert \widehat{\textbf{Y}} \rVert}_{I(\mathbb{R})^n} = \sum_{j=1}^{n} {\lVert \textbf{Y}_j \rVert}_{I(\mathbb{R})}, 
\]	
is a \emph{norm} on $I(\mathbb{R})^n$. 
\end{dfn}

\begin{dfn}\label{max} 
	(Maximum and minimum of intervals). Let $\textbf{Z}_1,\textbf{Z}_2,\ldots,\textbf{Z}_p$ be the elements of $I(\mathbb{R})$ with $\textbf{Z}_1\preceq \textbf{Z}_2 \preceq\cdots\preceq \textbf{Z}_p$. Then, 
	\[\max\{\textbf{Z}_1, \textbf{Z}_2 ,\ldots, \textbf{Z}_p\} = \textbf{Z}_p~\text{and}~\min\{\textbf{Z}_1, \textbf{Z}_2, \ldots, \textbf{Z}_p\} = \textbf{Z}_1.\]
	
\end{dfn}

\begin{rmrk}
It can be easily observe that the maximum of any finite set $\textbf{S}\subseteq I(\mathbb{R})$ of comparable intervals lies in the set $\textbf{S} $.
\end{rmrk}


\begin{lem}\label{00}
For intervals $\textbf{X}, \textbf{Y}, \textbf{Z}~\text{and}~\textbf{W}~\text{of}~ I(\mathbb{R})$,
\begin{enumerate}[(i)]
    \item \label{3_3} if $\textbf{X} \oplus\textbf{Y} \preceq \textbf{Z}\oplus\textbf{W}$, then $\textbf{X}\ominus_{gH} \textbf{Z}\preceq\textbf{W}\ominus_{gH} \textbf{Y} $;
    \item \label{3_5} if $\textbf{X}\preceq\textbf{Y}~\text{and}~\textbf{Y}\preceq\textbf{Z}$, then $ \textbf{P}\preceq\textbf{R}.$
\end{enumerate}
 \end{lem}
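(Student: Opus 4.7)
The plan is to reduce both parts of the lemma to scalar inequalities on the lower and upper endpoints of the intervals. Write $\textbf{X}=[\underline{x},\overline{x}]$, $\textbf{Y}=[\underline{y},\overline{y}]$, $\textbf{Z}=[\underline{z},\overline{z}]$, and $\textbf{W}=[\underline{w},\overline{w}]$. All manipulations then reduce to reading off the definitions of $\oplus$, $\ominus_{gH}$, and $\preceq$ on these four pairs of reals. I expect no deep idea is needed; the work is entirely bookkeeping on min/max expressions.

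For part (i), the hypothesis $\textbf{X}\oplus\textbf{Y}\preceq \textbf{Z}\oplus\textbf{W}$ unpacks, via Moore's interval addition and the definition of dominance, into the two scalar inequalities
\begin{equation*}
\underline{x}+\underline{y}\leq \underline{z}+\underline{w}\quad\text{and}\quad \overline{x}+\overline{y}\leq \overline{z}+\overline{w},
\end{equation*}
which rearrange to $\underline{x}-\underline{z}\leq \underline{w}-\underline{y}$ and $\overline{x}-\overline{z}\leq \overline{w}-\overline{y}$. Setting $a=\underline{x}-\underline{z}$, $b=\overline{x}-\overline{z}$, $c=\underline{w}-\underline{y}$, $d=\overline{w}-\overline{y}$, the definition of $\ominus_{gH}$ gives $\textbf{X}\ominus_{gH}\textbf{Z}=[\min\{a,b\},\max\{a,b\}]$ and $\textbf{W}\ominus_{gH}\textbf{Y}=[\min\{c,d\},\max\{c,d\}]$. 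I then invoke the elementary fact that if $a\leq c$ and $b\leq d$, then $\min\{a,b\}\leq \min\{c,d\}$ and $\max\{a,b\}\leq \max\{c,d\}$; the former is immediate since $\min\{a,b\}$ is below both $a\leq c$ and $b\leq d$, and the latter follows by checking the two cases for which of $a,b$ realises the max. This yields exactly $\textbf{X}\ominus_{gH}\textbf{Z}\preceq \textbf{W}\ominus_{gH}\textbf{Y}$, completing (i).

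For part (ii) (where the statement appears to contain a typographical slip and the intended conclusion is the transitivity $\textbf{X}\preceq\textbf{Z}$ under the hypotheses $\textbf{X}\preceq\textbf{Y}$ and $\textbf{Y}\preceq\textbf{Z}$), the argument is one line: the two hypotheses give $\underline{x}\leq \underline{y}\leq \underline{z}$ and $\overline{x}\leq \overline{y}\leq \overline{z}$, and transitivity of $\leq$ on $\mathbb{R}$ immediately yields $\underline{x}\leq \underline{z}$ and $\overline{x}\leq \overline{z}$, i.e.\ $\textbf{X}\preceq\textbf{Z}$.

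The only mildly tricky step is the min/max monotonicity used in (i); even that reduces to a two-line case analysis, so no real obstacle is anticipated. If one prefers, (i) can alternatively be packaged by first proving the cancellation-type identity $\textbf{X}\ominus_{gH}\textbf{Z}\oplus\textbf{Y}\preceq\textbf{W}$ from the hypothesis and then applying the definition of $\ominus_{gH}$ on the left, but the direct endpoint-level computation above is the cleanest route.
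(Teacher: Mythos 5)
Your proof is correct and follows essentially the same route as the paper's own argument: unpack the hypothesis into endpoint inequalities, rearrange, and apply the monotonicity of $\min$ and $\max$ to obtain the $gH$-difference dominance (and read the transitivity in (ii) directly off the endpoints, correctly identifying the typographical slip $\textbf{P}\preceq\textbf{R}$ as intending $\textbf{X}\preceq\textbf{Z}$). The only difference is that you justify the $\min/\max$ monotonicity step explicitly, which the paper simply asserts.
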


 \begin{proof}
     See \ref{appendix_inq}.
 \end{proof}
%

\begin{rmrk}
Let $\textbf{X}, \textbf{Y}, \textbf{Z}$ and $\textbf{W}$ of $I(\mathbb{R})$. If ($\textbf{X}\oplus \textbf{Y})\ominus_{gH}(\textbf{Z}\oplus \textbf{W}) = (\textbf{X} \ominus_{gH}\textbf{Z})\ominus_{gH}(\textbf{W} \ominus_{gH}\textbf{Y})$, then (\ref{3_3}) of  Lemma \ref{00} is an obvious property. However, $(\textbf{X}\oplus \textbf{Y})\ominus_{gH}(\textbf{Z}\oplus \textbf{W})$ is not always equal to $\big(\textbf{X}\ominus_{gH}\textbf{Z}\big) \ominus_{gH} \big(\textbf{W}\ominus_{gH}\textbf{Y}\big)$. For instance, consider $\textbf{X} = [-3, 2],~\textbf{Y} = [0,0],~\textbf{Z} = [4, 10]$, and $\textbf{W} = [-7.5, -6]$, then
    \begin{eqnarray*}
   &&(\textbf{X}\oplus \textbf{Y})\ominus_{gH}(\textbf{Z}\oplus \textbf{W})=[-2,0.5]
    ~\text{and}~(\textbf{X} \ominus_{gH}\textbf{Z})\ominus_{gH}(\textbf{W} \ominus_{gH}\textbf{Y})=[-1,-0.5].
        \end{eqnarray*}
     Therefore, (\ref{3_3}) of  Lemma \ref{00} is not an obvious property.
\end{rmrk}

\begin{note}\label{notei}
For any $\textbf{X},\textbf{Y}, \textbf{Z}$ and $\textbf{W}$ of $I(\mathbb{R})$, if we consider $\textbf{Y}=\textbf{0}$ in (\ref{3_3}) of Lemma \ref{00}, then $\textbf{X} \preceq \textbf{Z}\oplus \textbf{W}  \implies \textbf{X}\ominus_{gH} \textbf{Z}\preceq\textbf{W}$.
 \end{note}

\begin{note}\label{noteii}
For any $\textbf{X},\textbf{Y}$ and $\textbf{Z}$ of $I(\mathbb{R})$ if $\textbf{X} \preceq \textbf{Y}$, then $\textbf{X}\oplus \textbf{Z} \preceq \textbf{Y}\oplus \textbf{Z}$. Thus, from (\ref{3_3}) of Lemma \ref{00}, we obtain
    \[\textbf{X}\ominus_{gH} \textbf{Z} \preceq \textbf{Y}\ominus_{gH} \textbf{Z}~\text{ or }~ \textbf{Z}\ominus_{gH}\textbf{Y} \preceq \textbf{Z}\ominus_{gH}\textbf{X}.\]
\end{note}


	\begin{dfn}(Infimum of a set of intervals \label{lbound} \cite{gourav2020}). Let $\textbf{S}\subseteq \overline{I(\mathbb{R})}$. An interval $\mathbf{{X}}\in I(\mathbb{R})$ is said to be a lower bound of $\textbf{S}$ if 
	\[
	\mathbf{{X}}\preceq \textbf{Y}~\text{for all}~ \textbf{Y}\in \textbf{S}.\] 
	A lower bound $\mathbf{{X}}$ of $\textbf{S}$ is called an \emph{infimum} of $\textbf{S}$, denoted by $\inf\textbf{S}$, if
	\[
	\textbf{Z}\preceq \mathbf{{X}}~\text{for all lower bounds}~ \textbf{Z}~\text{of}~ \textbf{S}~\text{in}~ I(\mathbb{R}).
	\] 

	\end{dfn}


	\begin{dfn}(Supremum of a set of intervals \label{ubound}\cite{gourav2020}). Let $\textbf{S}\subseteq \overline{I(\mathbb{R})}$. An interval $\textbf{{X}}\in I(\mathbb{R})$ is said to be an upper bound of $\textbf{S}$ if 
	\[
	\textbf{Y}\preceq \mathbf{{X}}~\text{ for all}~ \textbf{Y} \in \textbf{S}.\] An upper bound $\mathbf{{X}}$ of $\textbf{S}$ is called a \emph{supremum} of $\textbf{S}$, denoted by $\sup\textbf{S}$, if 
	\[
	\mathbf{{X}}\preceq \textbf{Z}~\text{for all upper bounds}~\textbf{Z}~\text{of}~\textbf{S}~\text{in}~ I(\mathbb{R}).\]

	\end{dfn}


	\begin{rmrk}(See \cite{gourav2020}).
		Let $\textbf{S}=\left\{[a_\mu,b_\mu]\in \overline{I(\mathbb{R})}: \mu\in\Lambda~ \text{and}~ \Lambda~\text{being an index set}~\right\}$. Then, by definition \ref{lbound} and \ref{ubound}, it follows that $\inf\textbf{S}=\left[\inf\limits_{\mu\in\Lambda}a_\mu,~\inf\limits_{\mu\in\Lambda}b_\mu\right]$ and $\sup\textbf{S}=\left[\sup\limits_{\mu\in\Lambda}a_\mu,~\sup\limits_{\mu\in\Lambda}b_\mu\right].$
	\end{rmrk}

\begin{rmrk}  
  Let $\textbf{S}\subseteq$ $I(\mathbb{R})$ be a finite set of comparable intervals, then infimum and supremum of $\textbf{S}$ coincide with minimum and maximum of the set $\textbf{S}$, respectively. 
   
\end{rmrk}	
	


\section{$gH$-subdifferential calculus for interval-valued functions}\label{section3}

In this section, we derive some results based on $gH$-subgradient and $gH$-subdifferential of IVFs. In the sequel, the role of the proposed Fermat-type condition in developing a Fitz-John-type and a KKT-type condition for IOPs is shown.

\begin{dfn}{(Convex set of intervals).} 
A nonempty subset $\textbf{S}$ of $I(\mathbb{R})^{n}$ is said to be a \emph{convex set of intervals} if for every $\widehat{\textbf{Y}},~\widehat{\textbf{Z}} \in \textbf{S}$,
\[
\delta_{1}\odot\widehat{\textbf{Y}}\oplus\delta_{2}\odot\widehat{\textbf{Z}} \in \textbf{S}~\text{for all }\delta_{1},~\delta_{2} \in [0,1]~\text{with}~\delta_{1}+\delta_{2}=1.\]
\end{dfn}


\begin{dfn}{(Convex combination of intervals).}
Let $\widehat{\textbf{X}}$ be an interval in $I(\mathbb{R})^{n}$. Then,  $\widehat{\textbf{X}}$ is said to be a convex combination of the intervals  $\widehat{\textbf{X}}_{1},\widehat{\textbf{X}}_{2},\ldots,\widehat{\textbf{X}}_{p} \in I(\mathbb{R})^{n}$ if 
\[
\widehat{\textbf{X}}=\bigoplus_{j=1}^{p} \delta_{j}\odot \widehat{\textbf{X}}_{j}~\text{with}~\delta_{j} \geq 0 ~\text{and}~\sum_{i=j}^{p} \delta_{j}=1.
\]
\end{dfn}


\begin{dfn}{(Convex hull of a set of intervals).} \label{cohull}
For a nonempty set $\textbf{S}\subseteq I(\mathbb{R})^{n}$, the \emph{convex hull} of  set $\textbf{S}$, co($\textbf{S}$), is defined by 
\[
\text{co(\textbf{S})}=\left\{\widehat{\textbf{X}}\in I(\mathbb{R})^{n} : \widehat{\textbf{X}}=\bigoplus_{j=1}^{p} \delta_{j}\odot \widehat{\textbf{X}}_{j},~\widehat{\textbf{X}}_{j}\in \textbf{S}~\text{with}~\delta_{j}\geq 0~\text{and}~\sum_{j=1}^{p} \delta_{j}=1\right\}.\]
\end{dfn}


	\begin{dfn}\label{sup}(Supremum of an IVF). Let $\textbf{S}\subseteq I(\mathbb{R})^n$ and $\textbf{T}: \textbf{S} \rightarrow \overline{I(\mathbb{R})}$ be an extended IVF. Then, the \emph{supremum} of $\textbf{T}$, is defined as
	\[
	\sup_{\textbf{S}}\textbf{T}=\sup\{\textbf{T}(\widehat{\textbf{X}}): \widehat{\textbf{X}}\in \textbf{S}\}.
	\]
	\end{dfn}


		\begin{dfn}\label{inf}(Infimum of an IVF). Let $\textbf{S} \subseteq I(\mathbb{R})^n$ and $\textbf{T}: \textbf{S} \rightarrow \overline{I(\mathbb{R})}$ be an extended IVF. Then, the \emph{infimum} of $\textbf{T}$, is defined as 
	\[
	\inf_{\textbf{S}}\textbf{T}=\inf\{\textbf{T}(\widehat{\textbf{X}}): \widehat{\textbf{X}}\in \textbf{S}\}.
	\]
	\end{dfn}

	%

	
\begin{lem}\label{infsup}
Let $\textbf{S}\subseteq I(\mathbb{R})^{n}$ and $\textbf{T} : \textbf{S} \rightarrow \overline{I(\mathbb{R})}$ be an extended IVF. Then, for $\textbf{S}_1, \textbf{S}_2\subseteq \textbf{S}$ with $\textbf{S}_1 \subseteq \textbf{S}_2$ and $\delta \geq 0,$
\begin{enumerate}[(i)]
\item\label{4_1} $\underset{ \textbf{S}_2}{\inf}~\textbf{T}\preceq\underset{ \textbf{S}_1}{\inf}~\textbf{T}$, \\

\item \label{4_2}$\underset{\textbf{S}_{1}}{\sup}~\textbf{T}\preceq\underset{\textbf{S}_{2}}{\sup}~\textbf{T}$,\\

 \item\label{4_3} $\underset{\textbf{S}}{\inf}~(\delta\odot\textbf{T})=\delta \odot \underset{\textbf{S}}{\inf}~\textbf{T}$, and \\
 
 \item\label{4_4} $\underset{\textbf{S}}{\sup}~(\delta\odot\textbf{T})=\delta \odot\underset{\textbf{S}}{\sup}~\textbf{T}$.
\end{enumerate}
\end{lem}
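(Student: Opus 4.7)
The plan is to reduce each of parts (i)--(iv) to the corresponding statement for the real-valued endpoint functions $\underline{t}$ and $\overline{t}$ of $\mathbf{T}$, using the componentwise characterization of $\inf$ and $\sup$ on $I(\mathbb{R})$ recorded in the remark following Definitions \ref{lbound} and \ref{ubound}. Writing $\mathbf{T}(\widehat{\mathbf{X}}) = [\underline{t}(\widehat{\mathbf{X}}),\overline{t}(\widehat{\mathbf{X}})]$, that remark gives, for any $\widetilde{\mathbf{S}} \subseteq \mathbf{S}$,
\[
\inf_{\widetilde{\mathbf{S}}}\mathbf{T} = \Bigl[\inf_{\widehat{\mathbf{X}}\in\widetilde{\mathbf{S}}} \underline{t}(\widehat{\mathbf{X}}),\ \inf_{\widehat{\mathbf{X}}\in\widetilde{\mathbf{S}}} \overline{t}(\widehat{\mathbf{X}})\Bigr], \qquad \sup_{\widetilde{\mathbf{S}}}\mathbf{T} = \Bigl[\sup_{\widehat{\mathbf{X}}\in\widetilde{\mathbf{S}}} \underline{t}(\widehat{\mathbf{X}}),\ \sup_{\widehat{\mathbf{X}}\in\widetilde{\mathbf{S}}} \overline{t}(\widehat{\mathbf{X}})\Bigr].
\]

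For (i) and (ii), the plan is to apply the standard monotonicity of the real infimum and supremum under set inclusion to each endpoint separately. Since $\mathbf{S}_1 \subseteq \mathbf{S}_2$ forces $\inf_{\mathbf{S}_2}\underline{t} \leq \inf_{\mathbf{S}_1}\underline{t}$ and $\inf_{\mathbf{S}_2}\overline{t} \leq \inf_{\mathbf{S}_1}\overline{t}$, the dominance definition (comparison of both endpoints) immediately yields $\inf_{\mathbf{S}_2}\mathbf{T} \preceq \inf_{\mathbf{S}_1}\mathbf{T}$. Part (ii) is the symmetric statement for the supremum and is handled by exactly the same reduction, reversing the inequality direction at the endpoint level.

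For (iii) and (iv), the key observation is that for $\delta \geq 0$ the scalar multiplication acts componentwise as $\delta \odot [a,b] = [\delta a, \delta b]$, so $(\delta \odot \mathbf{T})(\widehat{\mathbf{X}}) = [\delta\,\underline{t}(\widehat{\mathbf{X}}),\ \delta\,\overline{t}(\widehat{\mathbf{X}})]$. Applying the elementary identities $\inf_{\mathbf{S}}(\delta g) = \delta \inf_{\mathbf{S}} g$ and $\sup_{\mathbf{S}}(\delta g) = \delta \sup_{\mathbf{S}} g$ for real-valued $g$ and nonnegative $\delta$ separately to $\underline{t}$ and $\overline{t}$ then reassembles both (iii) and (iv). The degenerate case $\delta = 0$ is handled by direct inspection, with both sides collapsing to $\mathbf{0}$.

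The only genuine subtlety---and what I view as the main (modest) obstacle---is the bookkeeping in the extended-valued setting, where $\underline{t}$ or $\overline{t}$ may attain $\pm\infty$. Provided one interprets the componentwise characterization with the usual extended-real conventions (and adopts $0\cdot(\pm\infty)=0$ in the $\delta=0$ instance of (iii) and (iv)), every step above remains valid without modification, and no deeper machinery than the scalar monotonicity and homogeneity of $\inf$/$\sup$ is required.
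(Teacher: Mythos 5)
Your proof is correct, but it follows a genuinely different route from the paper's. You reduce all four claims to the endpoint functions $\underline{t}$ and $\overline{t}$ via the componentwise formulas $\inf_{\widetilde{\textbf{S}}}\textbf{T}=\bigl[\inf\underline{t},\inf\overline{t}\bigr]$ and $\sup_{\widetilde{\textbf{S}}}\textbf{T}=\bigl[\sup\underline{t},\sup\overline{t}\bigr]$ (the remark following Definitions \ref{lbound} and \ref{ubound}, quoted from \cite{gourav2020}), after which everything is the standard monotonicity under set inclusion and nonnegative homogeneity of the real $\inf$ and $\sup$. The paper instead argues directly in the interval order $\preceq$: for (i) and (ii) it notes that a lower (resp.\ upper) bound of the larger image set is automatically one for the smaller set; for (iii) and (iv) it establishes one inequality from the lower-bound property and the reverse inequality by an $\epsilon$-argument, choosing $\widehat{\textbf{X}}_{1}$ with $\textbf{T}(\widehat{\textbf{X}}_{1})\prec\overline{\textbf{M}}\oplus\left[\epsilon/\delta,\epsilon/\delta\right]$ and scaling by $\delta$, so that no interval strictly exceeding $\delta\odot\overline{\textbf{M}}$ in the order can be a lower bound of the scaled image set. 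Your reduction is shorter, avoids the $\epsilon$-bookkeeping entirely, and is the only one of the two that explicitly confronts the extended-valued case (the paper's proof tacitly assumes $\overline{\textbf{M}},\overline{\textbf{N}}\in I(\mathbb{R})$ and is silent about $\pm\boldsymbol{\infty}$ and about $\delta=0$, which its $\epsilon/\delta$ step does not cover); the trade-off is that it leans on the componentwise characterization as an imported black box, whereas the paper's argument is self-contained in the order-theoretic Definitions \ref{lbound} and \ref{ubound}. Both arguments are sound.
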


\begin{proof}
See \ref{appendix_insu}.
\end{proof}


\begin{lem}\label{supinf}
Let $\textbf{S} \subseteq I(\mathbb{R})^{n}$ and $\textbf{T}_1 ,~\textbf{T}_2 : \textbf{S} \rightarrow \overline{I(\mathbb{R})}$ be extended IVFs. Then,

\begin{enumerate}[(i)]
\item\label{5_1} $\underset{\textbf{S}}{\inf}~\textbf{T}_1 \oplus \underset{\textbf{S}}{\inf}~\textbf{T}_2 \preceq \underset{\textbf{S}}{\inf}~(\textbf{T}_1 \oplus\textbf{T}_2)$ and \\

\item\label{5_2}  $ \underset{\textbf{S}}{\sup}~(\textbf{T}_1 \oplus \textbf{T}_2)\preceq\underset{\textbf{S}}{\sup}~\textbf{T}_1 \oplus \underset{\textbf{S}}{\sup}~\textbf{T}_2$.
\end{enumerate}
\end{lem}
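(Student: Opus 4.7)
The plan is to use the defining inequalities of $\inf$ and $\sup$ together with the monotonicity of $\oplus$ with respect to $\preceq$. As a preliminary step I would record the following two-sided monotonicity: whenever $\textbf{A}\preceq\textbf{B}$ and $\textbf{C}\preceq\textbf{D}$, we have $\textbf{A}\oplus\textbf{C}\preceq\textbf{B}\oplus\textbf{D}$. This is a direct consequence of Note \ref{noteii}, applied once to get $\textbf{A}\oplus\textbf{C}\preceq\textbf{B}\oplus\textbf{C}$ and once to get $\textbf{B}\oplus\textbf{C}\preceq\textbf{B}\oplus\textbf{D}$, followed by transitivity of $\preceq$ on $I(\mathbb{R})$.

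For part (\ref{5_1}), I would fix an arbitrary $\widehat{\textbf{X}}\in\textbf{S}$. By Definition \ref{lbound} applied to $\textbf{T}_1$ and $\textbf{T}_2$, we have $\underset{\textbf{S}}{\inf}\,\textbf{T}_1\preceq\textbf{T}_1(\widehat{\textbf{X}})$ and $\underset{\textbf{S}}{\inf}\,\textbf{T}_2\preceq\textbf{T}_2(\widehat{\textbf{X}})$. Combining these two inequalities via the monotonicity of $\oplus$ established above yields
\[
\underset{\textbf{S}}{\inf}\,\textbf{T}_1\oplus\underset{\textbf{S}}{\inf}\,\textbf{T}_2\preceq\textbf{T}_1(\widehat{\textbf{X}})\oplus\textbf{T}_2(\widehat{\textbf{X}})=(\textbf{T}_1\oplus\textbf{T}_2)(\widehat{\textbf{X}}).
\]
Since $\widehat{\textbf{X}}\in\textbf{S}$ is arbitrary, the left-hand side is a lower bound of the set $\{(\textbf{T}_1\oplus\textbf{T}_2)(\widehat{\textbf{X}}):\widehat{\textbf{X}}\in\textbf{S}\}$. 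By Definition \ref{lbound}, the infimum dominates every lower bound, so we conclude $\underset{\textbf{S}}{\inf}\,\textbf{T}_1\oplus\underset{\textbf{S}}{\inf}\,\textbf{T}_2\preceq\underset{\textbf{S}}{\inf}\,(\textbf{T}_1\oplus\textbf{T}_2)$.

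For part (\ref{5_2}), I would mirror the argument with the roles of infimum and supremum swapped. For an arbitrary $\widehat{\textbf{X}}\in\textbf{S}$, Definition \ref{ubound} gives $\textbf{T}_i(\widehat{\textbf{X}})\preceq\underset{\textbf{S}}{\sup}\,\textbf{T}_i$ for $i=1,2$, and monotonicity of $\oplus$ produces
\[
(\textbf{T}_1\oplus\textbf{T}_2)(\widehat{\textbf{X}})\preceq\underset{\textbf{S}}{\sup}\,\textbf{T}_1\oplus\underset{\textbf{S}}{\sup}\,\textbf{T}_2.
\]
Thus the right-hand side is an upper bound of $\{(\textbf{T}_1\oplus\textbf{T}_2)(\widehat{\textbf{X}}):\widehat{\textbf{X}}\in\textbf{S}\}$, and the defining minimality of $\sup$ from Definition \ref{ubound} yields $\underset{\textbf{S}}{\sup}\,(\textbf{T}_1\oplus\textbf{T}_2)\preceq\underset{\textbf{S}}{\sup}\,\textbf{T}_1\oplus\underset{\textbf{S}}{\sup}\,\textbf{T}_2$.

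There is no serious obstacle here; the whole proof is a direct transcription of the classical real-valued argument into the interval setting, and the only technical point worth flagging is the preliminary two-sided monotonicity of $\oplus$, which is not stated as a standalone lemma in the paper but is immediate from Note \ref{noteii}.
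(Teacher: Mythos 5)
Your proof is correct and follows essentially the same route as the paper's: bound $\textbf{T}_1(\widehat{\textbf{X}})\oplus\textbf{T}_2(\widehat{\textbf{X}})$ pointwise using the defining inequalities of $\inf$ and $\sup$, then invoke the greatest-lower-bound (resp.\ least-upper-bound) property. The only difference is cosmetic: the paper justifies the addition of the two inequalities by citing Lemma~2.5 of \cite{gourav2020}, whereas you derive the same monotonicity of $\oplus$ directly from Note~\ref{noteii} and transitivity.
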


\begin{proof}
See \ref{appendix_suprinfm}.
\end{proof}

	\begin{dfn}{(Sequence in $I(\mathbb{R})^n$.} An IVF $\widehat{\textbf{T}}:\mathbb{N}\rightarrow I(\mathbb{R})^n$ is called a \emph{sequence} in $I(\mathbb{R})^n$.
 	\end{dfn}


\begin{dfn}\label{g22}{(Convergence of a sequence in $I(\mathbb{R})^n$).} 
		A sequence $\{\widehat{\textbf{G}}_k\}$ in $I(\mathbb{R})^n$ is said to be \emph{convergent} to $\widehat{\textbf{G}} \in I(\mathbb{R})^n$ if for each $\epsilon>0$, there exists an $m\in \mathbb{N}$ such that
		\[
		\lVert \widehat{\textbf{G}}_k \ominus_{gH} \widehat{\textbf{G}} \rVert_{I(\mathbb{R})^n} < \epsilon~\text{for all}~k\geq m.\]
		The interval $\widehat{\textbf{G}}$ is called limit of the sequence $\{\widehat{\textbf{G}}_k\}$ and is presented by $\underset{k \to \infty}{\lim}{\widehat{\textbf{G}}}_k= \widehat{\textbf{G}}$.
		 
\end{dfn}


\begin{rmrk}
It is to be acclaimed that if a sequence $\{\widehat{\textbf{G}}_k\}$ in $I(\mathbb{R})^n$ converges to some $\widehat{\textbf{G}} \in I(\mathbb{R})^n$, where $\widehat{\textbf{G}}_k=(\textbf{G}_{1_k},\textbf{G}_{2_k},\ldots,\textbf{G}_{n_k})$ and $\widehat{\textbf{G}}=(\textbf{G}_1,\textbf{G}_2,\ldots,\textbf{G}_n)$, then from Definitions \ref{norm} and \ref{norm_2}, the sequence $\{\textbf{G}_{j_k}\}$ in $ I(\mathbb{R})$ converges to $\textbf{G}_j \in  I(\mathbb{R})$ for each $j=1,2,\ldots,n$.
\end{rmrk}

\begin{lem} \label{compdd}
Let $\{\textbf{X}_k\}$ and $ \{\textbf{Y}_k\}$ be two sequence in $I(\mathbb{R})^n$ and $\underset{k \to \infty}{\lim} \textbf{X}_k=\textbf{X}$ and $\underset{k \to \infty}{\lim} \textbf{Y}_k=\textbf{Y}$. If $\textbf{X}_k \preceq \textbf{Y}_k$ for all $k$, then $\textbf{X} \preceq \textbf{Y}.$
\end{lem}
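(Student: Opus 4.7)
The plan is to reduce the statement to the componentwise case in $I(\mathbb{R})$, and then further reduce each such component to a pair of real inequalities involving endpoints. Write $\textbf{X}_k = (\textbf{X}_{1_k}, \ldots, \textbf{X}_{n_k})$ and $\textbf{Y}_k = (\textbf{Y}_{1_k}, \ldots, \textbf{Y}_{n_k})$, with analogous notation for the limits $\textbf{X}$ and $\textbf{Y}$. By the remark following Definition \ref{g22}, convergence of $\{\widehat{\textbf{X}}_k\}$ in $I(\mathbb{R})^n$ forces $\textbf{X}_{j_k} \to \textbf{X}_j$ in $I(\mathbb{R})$ for every $j = 1, 2, \ldots, n$, and similarly $\textbf{Y}_{j_k} \to \textbf{Y}_j$. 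Also, from the remark after the definition of $\preceq$ on $I(\mathbb{R})^n$, the hypothesis $\textbf{X}_k \preceq \textbf{Y}_k$ reduces to $\textbf{X}_{j_k} \preceq \textbf{Y}_{j_k}$ for every $k$ and every $j$. So it suffices to handle one fixed component, and then the general conclusion $\textbf{X}\preceq \textbf{Y}$ follows immediately by the same remark.

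Next I would translate convergence in $I(\mathbb{R})$, expressed through the $gH$-difference and the norm $\|\textbf{Z}\|_{I(\mathbb{R})} = \max\{|\underline{z}|, |\overline{z}|\}$, into convergence of the two endpoint sequences. If $\textbf{X}_{j_k} = [\underline{x}_{j_k}, \overline{x}_{j_k}]$ and $\textbf{X}_j = [\underline{x}_j, \overline{x}_j]$, then by the closed-form expression of the $gH$-difference,
\[
\textbf{X}_{j_k} \ominus_{gH} \textbf{X}_j = \bigl[\min\{\underline{x}_{j_k} - \underline{x}_j, \overline{x}_{j_k} - \overline{x}_j\},\ \max\{\underline{x}_{j_k} - \underline{x}_j, \overline{x}_{j_k} - \overline{x}_j\}\bigr].
\]
Hence $\|\textbf{X}_{j_k} \ominus_{gH} \textbf{X}_j\|_{I(\mathbb{R})} \to 0$ gives $\underline{x}_{j_k} \to \underline{x}_j$ and $\overline{x}_{j_k} \to \overline{x}_j$ as real sequences. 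The analogous statements hold for $\textbf{Y}_{j_k}$.

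Now the hypothesis $\textbf{X}_{j_k} \preceq \textbf{Y}_{j_k}$ unpacks, by the definition of dominance, as $\underline{x}_{j_k} \le \underline{y}_{j_k}$ and $\overline{x}_{j_k} \le \overline{y}_{j_k}$ for every $k$. Since weak inequality between real numbers is preserved under limits, passing $k \to \infty$ yields $\underline{x}_j \le \underline{y}_j$ and $\overline{x}_j \le \overline{y}_j$, i.e.\ $\textbf{X}_j \preceq \textbf{Y}_j$. Combining across $j = 1, 2, \ldots, n$ gives $\textbf{X} \preceq \textbf{Y}$, as required.

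The proof is essentially routine; the only mildly delicate point is the second step, namely verifying that convergence in the norm $\|\cdot\|_{I(\mathbb{R})}$ through the $gH$-difference forces convergence of both endpoints. Because the $gH$-difference mixes the two endpoint differences through a $\min$/$\max$, some care is needed to extract $\underline{x}_{j_k} \to \underline{x}_j$ and $\overline{x}_{j_k} \to \overline{x}_j$ separately; but the explicit formula above makes this immediate, since both $|\underline{x}_{j_k} - \underline{x}_j|$ and $|\overline{x}_{j_k} - \overline{x}_j|$ are bounded above by $\|\textbf{X}_{j_k} \ominus_{gH} \textbf{X}_j\|_{I(\mathbb{R})}$.
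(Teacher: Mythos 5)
Your proof is correct and follows essentially the same route as the paper's: unpack the dominance relation into the endpoint inequalities $\underline{x}_k \le \underline{y}_k$, $\overline{x}_k \le \overline{y}_k$ and pass to the limit. You are in fact somewhat more careful than the paper, which works componentwise implicitly and takes for granted the step you justify explicitly, namely that convergence in the norm $\lVert\cdot\rVert_{I(\mathbb{R})}$ via the $gH$-difference forces convergence of both endpoint sequences.
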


\begin{proof}
From $\textbf{X}_k \preceq \textbf{Y}_k $, we have
\begin{eqnarray*}
&& \underline{x}_k \leq \underline{y}_k~\text{and}~\overline{x}_k \leq \overline{y}_k\\
&\implies& \underset{k \to \infty}{\lim} \underline{x}_k \leq \underset{k \to \infty}{\lim} \underline{y}_k~\text{and}~\underset{k \to \infty}{\lim}\overline{x}_k \leq \underset{k \to \infty}{\lim}\overline{y}_k \\
&\implies& \underline{x} \leq \underline{y}~\text{and}~\overline{x} \leq \overline{y}\\
&\implies&\textbf{X} \preceq \textbf{Y}.
\end{eqnarray*}
\end{proof}


\begin{lem} \label{subsequence}
Let  $\{\widehat{\textbf{G}}_k\}$ be a sequence in $I(\mathbb{R})^n$ that converges to an interval $\widehat{\textbf{G}} \in I(\mathbb{R})^n$. Then, every subsequence of $\{\widehat{\textbf{G}}_k\}$ converges to the same limit $\widehat{\textbf{G}}$.
\end{lem}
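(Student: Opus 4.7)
The plan is to imitate the standard real-analysis proof of the fact that every subsequence of a convergent sequence converges to the same limit, adapted to the $I(\mathbb{R})^n$ setting by using the norm $\lVert \cdot \rVert_{I(\mathbb{R})^n}$ from Definition \ref{norm_2} together with the $\epsilon$-$m$ formulation of $gH$-convergence given in Definition \ref{g22}. Concretely, I would fix an arbitrary subsequence $\{\widehat{\textbf{G}}_{k_j}\}_{j\in\mathbb{N}}$ of $\{\widehat{\textbf{G}}_k\}$, where $k_1<k_2<\cdots$ is a strictly increasing sequence of natural numbers, and then prove $\lim_{j\to\infty}\widehat{\textbf{G}}_{k_j}=\widehat{\textbf{G}}$ directly from the definition.

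First, I would record the elementary arithmetic fact that for any strictly increasing sequence of natural numbers one has $k_j\ge j$ for every $j\in\mathbb{N}$; this is immediate by induction on $j$ (the base case $k_1\ge 1$ holds because $k_1\in\mathbb{N}$, and the induction step uses $k_{j+1}>k_j\ge j$, hence $k_{j+1}\ge j+1$). Next, fix $\epsilon>0$. By the hypothesis that $\{\widehat{\textbf{G}}_k\}$ converges to $\widehat{\textbf{G}}$, Definition \ref{g22} supplies an $m\in\mathbb{N}$ with
\[
\lVert \widehat{\textbf{G}}_k \ominus_{gH} \widehat{\textbf{G}}\rVert_{I(\mathbb{R})^n}<\epsilon \quad\text{for all }k\ge m.
\]

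Now for every $j\ge m$ the inequality $k_j\ge j\ge m$ holds, so the displayed bound applies with $k$ replaced by $k_j$, giving
\[
\lVert \widehat{\textbf{G}}_{k_j}\ominus_{gH}\widehat{\textbf{G}}\rVert_{I(\mathbb{R})^n}<\epsilon\quad\text{for all }j\ge m.
\]
Since $\epsilon>0$ was arbitrary, Definition \ref{g22} yields $\lim_{j\to\infty}\widehat{\textbf{G}}_{k_j}=\widehat{\textbf{G}}$, as required.

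There is essentially no obstacle here: the only non-trivial ingredient is the index lemma $k_j\ge j$, and everything else is a direct transcription of the scalar proof once one accepts that $gH$-convergence is defined by a scalar norm on $I(\mathbb{R})^n$. In particular no properties of $\ominus_{gH}$ beyond the already-assumed norm formulation are needed, so I would not appeal to any of the earlier results such as Lemma \ref{00} or Lemma \ref{compdd}.
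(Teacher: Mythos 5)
Your proof is correct and follows essentially the same route as the paper's: both fix an arbitrary subsequence, invoke the $\epsilon$-$m$ formulation of convergence in the norm $\lVert\cdot\rVert_{I(\mathbb{R})^n}$, and use the fact that the subsequence indices eventually exceed $m$ (you make this explicit via $k_j\ge j$, the paper simply asserts the existence of such a $p$). No differences of substance.
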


\begin{proof}
Let $\{\widehat{\textbf{G}}_{k_j}\}$ be a subsequence of the convergent sequence $\{\widehat{\textbf{G}}_k\}$. Since $\{\widehat{\textbf{G}}_k\}$ has a limit $\widehat{\textbf{G}}$, for each $\epsilon>0$, there exists an $m \in \mathbb{N}$ such that 
	\begin{eqnarray*}
	&&\lVert \widehat{\textbf{G}}_k \ominus_{gH} \widehat{\textbf{G}} \rVert_{I(\mathbb{R})^n} < \epsilon~\text{for all}~k\geq m.
		\end{eqnarray*}
As $\{k_j\}$ is an increasing sequence of natural numbers, there exists $p \in \mathbb{N}$ such that $k_j \geq m$ for each $j \geq p.$ Therefore,
			\begin{eqnarray*}
	\lVert \widehat{\textbf{G}}_{k_j} \ominus_{gH}  \widehat{\textbf{G}} \rVert_{I(\mathbb{R})^n} < \epsilon~\text{for all}~j\geq p.
	\end{eqnarray*}
	This implies that $\{\widehat{\textbf{G}}_{k_j}\}$ converges to $\widehat{\textbf{G}}$.
\end{proof}


\begin{dfn}{(Closed set in $I(\mathbb{R})^n)$.} \label{closedset}
A nonempty subset $\textbf{S} \subseteq I(\mathbb{R})^n$ is said to be \emph{closed} iff for every convergent sequence $\{\widehat{\textbf{G}}_k\}$ in $\textbf{S}$ converging to $ \widehat{\textbf{G}}$, $ \widehat{\textbf{G}}$ must belong to $\textbf{S}$.

\end{dfn}


\begin{dfn} {(Bounded set in $I(\mathbb{R})^n)$.} \label{boundedset}
Let $\textbf{S} \subseteq I(\mathbb{R})^n$. An interval $\widehat{{{\textbf{A}}}} \in I(\mathbb{R})^n$ is said to be a lower bound of $\textbf{S}$ if 
\[\widehat{{\textbf{A}}}\preceq \widehat{\textbf{C}}~\text{for all}~\widehat{\textbf{C}} \in \textbf{S}.\] 
An interval $\widehat{{\textbf{B}}} \in I(\mathbb{R})^n$ is said to be an upper bound of $\textbf{S}$ if 
\[
\widehat{\textbf{C}} \preceq \widehat{{\textbf{B}}}~\text{for all}~\widehat{\textbf{C}} \in \textbf{S}.\]
A nonempty subset $\textbf{S} \subseteq I(\mathbb{R})^n$ which is bounded above and bounded below is said to be \emph{bounded}.

\end{dfn}


\begin{thm}{\emph{(Finite union of closed sets in $I(\mathbb{R})^n$).}} \label{closedunion}
Let $\textbf{S}_1,\textbf{S}_2,\ldots,\textbf{S}_p$ be a  finite collection of closed sets in $I(\mathbb{R})^n$. Then, $\bigcup_{j=1}^{p} \textbf{S}_j$ is closed.
\end{thm}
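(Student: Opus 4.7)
The plan is to prove this by the sequential characterization of closed sets given in Definition \ref{closedset}. Specifically, I will take an arbitrary convergent sequence in $\bigcup_{j=1}^p \textbf{S}_j$ and show that its limit lies in the union.

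First, I would let $\{\widehat{\textbf{G}}_k\}$ be an arbitrary sequence in $\bigcup_{j=1}^{p}\textbf{S}_j$ converging to some $\widehat{\textbf{G}} \in I(\mathbb{R})^n$. Since every term $\widehat{\textbf{G}}_k$ belongs to at least one of the finitely many sets $\textbf{S}_1,\textbf{S}_2,\ldots,\textbf{S}_p$, a pigeonhole argument on the index set $\mathbb{N}$ guarantees the existence of at least one index $j_0 \in \{1,2,\ldots,p\}$ such that the set $\{k \in \mathbb{N} : \widehat{\textbf{G}}_k \in \textbf{S}_{j_0}\}$ is infinite. Enumerating this set in increasing order as $k_1 < k_2 < \cdots$ yields a subsequence $\{\widehat{\textbf{G}}_{k_l}\}$ of $\{\widehat{\textbf{G}}_k\}$ that lies entirely inside $\textbf{S}_{j_0}$.

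Next, I would invoke Lemma \ref{subsequence}, which ensures that every subsequence of a convergent sequence converges to the same limit. Thus, $\{\widehat{\textbf{G}}_{k_l}\}$ is a sequence in $\textbf{S}_{j_0}$ converging to $\widehat{\textbf{G}}$. Because $\textbf{S}_{j_0}$ is closed, Definition \ref{closedset} forces $\widehat{\textbf{G}} \in \textbf{S}_{j_0}$, and consequently $\widehat{\textbf{G}} \in \bigcup_{j=1}^{p}\textbf{S}_j$. Since the chosen convergent sequence was arbitrary, $\bigcup_{j=1}^{p}\textbf{S}_j$ contains all its sequential limit points and is therefore closed.

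There is no real analytic obstacle here; the argument is a direct transcription of the standard real-analysis proof that a finite union of closed sets is closed into the interval-valued setting. The only small point to be careful about is the pigeonhole step: the finiteness of the collection is essential, as otherwise we could not guarantee that some single $\textbf{S}_{j_0}$ captures infinitely many terms. Everything else—the subsequence extraction, the transfer of the limit via Lemma \ref{subsequence}, and the closure of $\textbf{S}_{j_0}$—follows immediately from the results already established in the preliminaries.
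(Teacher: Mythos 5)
Your proof is correct and follows essentially the same route as the paper: an arbitrary convergent sequence in the union, the pigeonhole observation that some single $\textbf{S}_{j_0}$ contains infinitely many terms, extraction of a subsequence inside $\textbf{S}_{j_0}$, Lemma \ref{subsequence} to transfer the limit, and closedness of $\textbf{S}_{j_0}$ to conclude. Your write-up is in fact slightly more explicit than the paper's (it spells out the enumeration $k_1 < k_2 < \cdots$ of the infinite index set), but there is no substantive difference.
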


\begin{proof}
Let $\{\widehat{\textbf{G}}_k\}$ be an arbitrary sequence in $\bigcup_{j=1}^{p} \textbf{S}_j$ that converges to $\widehat{\textbf{G}} \in I(\mathbb{R})^n$, where $\widehat{\textbf{G}}_k=(\textbf{G}_{1_k},\textbf{G}_{2_k},\ldots,\textbf{G}_{n_k})$ and $\widehat{\textbf{G}}=(\textbf{G}_1,\textbf{G}_2,\ldots,\textbf{G}_n)$. 
Since $\{\widehat{\textbf{G}}_k\}$ contains infinitely many terms and $\bigcup_{j=1}^{p}\textbf{S}_j$ is union of finite number of sets, there exists at least one $\textbf{S}_j$ that contains infinitely many terms of the sequence $\{\widehat{\textbf{G}}_k\}$. Hence, by Lemma \ref{subsequence}, we get a subsequence of $\{\widehat{\textbf{G}}_k\}$ in $\textbf{S}_j$ which converges to  $\widehat{\textbf{G}}$ .
Since $\textbf{S}_j$ is closed, by Definition \ref{closedset}, $\widehat{\textbf{G}} \in \textbf{S}_j$,  which implies that $\widehat{\textbf{G}} \in \bigcup_{j=1}^{p} \textbf{S}_j$. 
Hence, $\bigcup_{j=1}^{p} \textbf{S}_j$ is closed.
\end{proof}



\begin{thm} {\emph{(Finite union of bounded sets in $I(\mathbb{R})^n$).}} \label{finitebounded}
Let $\textbf{S}_1,\textbf{S}_2,\ldots,\textbf{S}_p$ be a finite collection of bounded sets in $I(\mathbb{R})^n$. Then, $\bigcup_{j=1}^{p} \textbf{S}_j$ is bounded.

\end{thm}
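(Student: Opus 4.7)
The plan is to construct an explicit common lower bound and common upper bound for the union $\bigcup_{j=1}^{p} \textbf{S}_j$, built from the individual bounds of each $\textbf{S}_j$. By Definition \ref{boundedset} together with the component-wise characterization of $\preceq$ given in the remark after the definition of dominance, it suffices to produce elements $\widehat{\textbf{A}}, \widehat{\textbf{B}} \in I(\mathbb{R})^n$ that dominate (or are dominated by) every point of the union.

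First, I would invoke boundedness of each $\textbf{S}_j$ to obtain intervals $\widehat{\textbf{A}}_j, \widehat{\textbf{B}}_j \in I(\mathbb{R})^n$ such that $\widehat{\textbf{A}}_j \preceq \widehat{\textbf{C}} \preceq \widehat{\textbf{B}}_j$ for every $\widehat{\textbf{C}} \in \textbf{S}_j$ and every $j=1,2,\ldots,p$. Writing $\widehat{\textbf{A}}_j=(\textbf{A}_{j,1},\ldots,\textbf{A}_{j,n})$ with $\textbf{A}_{j,i}=[\underline{a}_{j,i},\overline{a}_{j,i}]$, and similarly $\widehat{\textbf{B}}_j=(\textbf{B}_{j,1},\ldots,\textbf{B}_{j,n})$ with $\textbf{B}_{j,i}=[\underline{b}_{j,i},\overline{b}_{j,i}]$, I would define coordinate-wise
\[
\textbf{A}_i = \Bigl[\min_{1\le j\le p}\underline{a}_{j,i},\; \min_{1\le j\le p}\overline{a}_{j,i}\Bigr], \qquad \textbf{B}_i = \Bigl[\max_{1\le j\le p}\underline{b}_{j,i},\; \max_{1\le j\le p}\overline{b}_{j,i}\Bigr],
\]
and set $\widehat{\textbf{A}}=(\textbf{A}_1,\ldots,\textbf{A}_n)$, $\widehat{\textbf{B}}=(\textbf{B}_1,\ldots,\textbf{B}_n)$. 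A short verification shows these are legitimate elements of $I(\mathbb{R})^n$: picking $j^{*}$ attaining $\min_j \overline{a}_{j,i}$ gives $\min_j \underline{a}_{j,i} \le \underline{a}_{j^{*},i}\le \overline{a}_{j^{*},i}=\min_j \overline{a}_{j,i}$, so the lower endpoint of $\textbf{A}_i$ does not exceed its upper endpoint; an analogous argument handles $\textbf{B}_i$.

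Next, I would check that $\widehat{\textbf{A}} \preceq \widehat{\textbf{A}}_j$ and $\widehat{\textbf{B}}_j \preceq \widehat{\textbf{B}}$ for every $j$, which is immediate from the definitions of $\min$ and $\max$ combined with the component-wise characterization of $\preceq$. Then for an arbitrary $\widehat{\textbf{C}} \in \bigcup_{j=1}^{p}\textbf{S}_j$, there is some $j_0$ with $\widehat{\textbf{C}}\in\textbf{S}_{j_0}$, hence $\widehat{\textbf{A}}_{j_0}\preceq\widehat{\textbf{C}}\preceq\widehat{\textbf{B}}_{j_0}$. Chaining this with the bounds just established, and appealing to the transitivity of $\preceq$ recorded in Lemma \ref{00}(\ref{3_5}) componentwise, yields $\widehat{\textbf{A}}\preceq\widehat{\textbf{C}}\preceq\widehat{\textbf{B}}$. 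Since $\widehat{\textbf{C}}$ was arbitrary, $\widehat{\textbf{A}}$ and $\widehat{\textbf{B}}$ are a lower and upper bound for the union, so $\bigcup_{j=1}^{p}\textbf{S}_j$ is bounded.

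I do not anticipate any real obstacle in this proof, since it is essentially a finite-collection bookkeeping argument that parallels the classical real-analytic fact. The one subtlety worth stating explicitly is the check that the componentwise minima and maxima defining $\widehat{\textbf{A}}$ and $\widehat{\textbf{B}}$ still satisfy the interval constraint (lower endpoint $\le$ upper endpoint); beyond that, everything reduces to the component-wise characterization of $\preceq$ and the previously established transitivity.
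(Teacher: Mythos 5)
Your proof is correct and follows essentially the same route as the paper's: take the componentwise minima of the lower bounds' endpoints and maxima of the upper bounds' endpoints, check these dominate (resp.\ are dominated by) each $\widehat{\textbf{A}}_j$, $\widehat{\textbf{B}}_j$, and chain by transitivity. Your explicit verification that the constructed componentwise min/max endpoints still satisfy the interval constraint is a small point the paper leaves implicit, but the argument is otherwise identical.
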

\begin{proof}
Since $\textbf{S}_1,\textbf{S}_2,\ldots,\textbf{S}_p$ are bounded sets in $I(\mathbb{R})^n$, there exist $\widehat{{\textbf{X}}}_1,\widehat{{\textbf{X}}}_2,\ldots,$ $\widehat{{\textbf{X}}}_p $ and\\ $\widehat{{\textbf{Y}}}_1,\widehat{{\textbf{Y}}}_2,\ldots,\widehat{{\textbf{Y}}}_p$ in $I(\mathbb{R})^n$ such that 
\begin{eqnarray}\label{dvn_1}
\widehat{{\textbf{X}}}_j \preceq \widehat{\textbf{C}}_j \preceq \widehat{{\textbf{Y}}}_j~\text{for each}~\widehat{\textbf{C}}_j~\text{in}~ \textbf{S}_j,~j= 1, 2, \ldots, p.
\end{eqnarray}
where $\widehat{\textbf{X}}_j=(\textbf{X}_{1_j},\textbf{X}_{2_j},\ldots,\textbf{X}_{n_j})$ and $\widehat{\textbf{Y}}_j=(\textbf{Y}_{1_j},\textbf{Y}_{2_j},\ldots,\textbf{Y}_{n_j})$.\\  
Let $\widehat{\textbf{X}}=(\textbf{X}_1,\textbf{X}_2,\ldots,\textbf{X}_n)$ and $\widehat{\textbf{Y}}=(\textbf{Y}_1,\textbf{Y}_2,\ldots,\textbf{Y}_n)$, where $\textbf{X}_\mu=[\min\underline{x}_{\mu_j},\min\overline{x}_{\mu_j}]$ and $ \textbf{Y}_\mu=[\max\underline{y}_{\mu_j},\max\overline{y}_{\mu_j}]$ for all $\mu=1, 2, \ldots, n$ and $j=1, 2, \ldots, p$. Then,
\begin{eqnarray}\label{dvn_2}
\widehat{\textbf{X}}\preceq \widehat{\textbf{X}}_j~\text{and}~\widehat{\textbf{Y}}_j \preceq \widehat{\textbf{Y}}~\text{for each}~j=1, 2, \ldots, p.
\end{eqnarray}
From (\ref{dvn_1}) and (\ref{dvn_2}), we obtain that for all $j=1,2,\ldots,p$, 
%
\begin{align*}
&\widehat{\textbf{X}} \preceq \widehat{\textbf{C}}_j \preceq \widehat{\textbf{Y}}\\
\text{or},~& \widehat{\textbf{X}} \preceq \widehat{\textbf{Z}} \preceq \widehat{\textbf{Y}},~\text{where}~\widehat{\textbf{Z}}\in\bigcup_{j=1}^{p}  \textbf{S}_j.
\end{align*} 
Hence, $\bigcup_{j=1}^{p} \textbf{S}_j $ is a bounded set.
\end{proof}


\begin{thm}{\emph{(Closedness of the convex hull of a set in $I(\mathbb{R})^n$).}} \label{closedhull}
Let $\textbf{S}$ be a nonempty closed set in $I(\mathbb{R})^n$. Then, the convex hull of $\textbf{S}$ is a closed set.
\end{thm}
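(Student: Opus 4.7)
The plan is to verify closedness of $\text{co}(\textbf{S})$ via the sequential criterion of Definition \ref{closedset}: take an arbitrary sequence $\{\widehat{\textbf{Y}}_k\}$ in $\text{co}(\textbf{S})$ converging to some $\widehat{\textbf{Y}}\in I(\mathbb{R})^n$, and show $\widehat{\textbf{Y}}\in\text{co}(\textbf{S})$. By Definition \ref{cohull}, each $\widehat{\textbf{Y}}_k$ admits a representation
\[
\widehat{\textbf{Y}}_k \;=\; \bigoplus_{j=1}^{p_k} \delta_{j,k}\odot \widehat{\textbf{X}}_{j,k}, \quad \widehat{\textbf{X}}_{j,k}\in \textbf{S},\ \delta_{j,k}\geq 0,\ \sum_{j=1}^{p_k}\delta_{j,k}=1,
\]
so my aim is to extract limits of both the coefficients and the points and recombine them into an admissible representation of $\widehat{\textbf{Y}}$.

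The first step is a uniform bound on the number of summands. Using the identification $I(\mathbb{R})^n\simeq \mathbb{R}^{2n}$ supplied by $[\underline{x}_i,\overline{x}_i]\leftrightarrow(\underline{x}_i,\overline{x}_i)$ and the classical Carath\'eodory theorem, each $\widehat{\textbf{Y}}_k$ can be written as a convex combination of at most $2n+1$ elements of $\textbf{S}$; padding with zero coefficients if necessary, I fix $p_k=2n+1$ for every $k$. The coefficient vectors $(\delta_{1,k},\ldots,\delta_{2n+1,k})$ then lie in the compact standard simplex of $\mathbb{R}^{2n+1}$, so by Bolzano--Weierstrass a subsequence converges to a limit $(\delta_1,\ldots,\delta_{2n+1})$ with $\delta_j\geq 0$ and $\sum_{j=1}^{2n+1}\delta_j=1$.

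Next, along the same subsequence, the boundedness of $\{\widehat{\textbf{Y}}_k\}$ (which follows from its convergence under $\lVert\cdot\rVert_{I(\mathbb{R})^n}$ of Definition \ref{norm_2}) together with a minimality argument inside the Carath\'eodory reduction lets me arrange that each $\{\widehat{\textbf{X}}_{j,k}\}$ is bounded in $I(\mathbb{R})^n$. A further diagonal Bolzano--Weierstrass extraction via the identification with $\mathbb{R}^{2n}$ then produces $\widehat{\textbf{X}}_{j,k}\to \widehat{\textbf{X}}_j$ for every $j$. Since $\textbf{S}$ is closed, each $\widehat{\textbf{X}}_j\in \textbf{S}$, and passing to the limit using continuity of $\oplus$ and $\odot$ with respect to $\lVert\cdot\rVert_{I(\mathbb{R})^n}$ (together with Lemma \ref{compdd} to preserve the convex-combination identity) yields $\widehat{\textbf{Y}}=\bigoplus_{j=1}^{2n+1}\delta_j\odot \widehat{\textbf{X}}_j\in\text{co}(\textbf{S})$.

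The principal obstacle is precisely the boundedness step for the sequences $\{\widehat{\textbf{X}}_{j,k}\}$: when a coefficient $\delta_{j,k}$ collapses to zero, the corresponding point may wander off in $\textbf{S}$ while the product $\delta_{j,k}\odot \widehat{\textbf{X}}_{j,k}$ still tends to $\textbf{0}$. The cleanest way to handle this is to preselect, for each $k$, a Carath\'eodory-minimal representation in which every appearing coefficient is bounded below by a positive quantity, so that a diagonal argument confines the $\widehat{\textbf{X}}_{j,k}$ to a bounded region; vanishing coefficients are then absorbed separately by a direct estimate $\lVert\delta_{j,k}\odot\widehat{\textbf{X}}_{j,k}\rVert_{I(\mathbb{R})^n}\leq \delta_{j,k}\lVert\widehat{\textbf{X}}_{j,k}\rVert_{I(\mathbb{R})^n}$. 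Handling this reduction rigorously, and verifying the joint continuity of the interval operations in the required sense, forms the technical core of the argument.
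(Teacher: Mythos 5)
Your overall strategy is the same as the paper's: take a convergent sequence in $\text{co}(\textbf{S})$, represent each term as a convex combination of points of $\textbf{S}$, extract convergent subsequences of the coefficients and of the points, and pass to the limit. Your use of Carath\'eodory's theorem under the identification $I(\mathbb{R})^n\simeq\mathbb{R}^{2n}$ to fix the number of summands at $2n+1$ is a genuine improvement: the paper silently writes every $\widehat{\textbf{G}}_k$ with exactly $n$ summands, which Definition \ref{cohull} does not license.

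However, the step you yourself flag as the technical core --- boundedness of the sequences $\{\widehat{\textbf{X}}_{j,k}\}$ --- is a real gap, and it cannot be closed. No ``Carath\'eodory-minimal'' choice of representation gives a positive lower bound on the coefficients uniform in $k$; and even if $\delta_{j,k}\ge c>0$ for all $j,k$, the endpoint-wise addition $\oplus$ admits cancellation, so summands can escape to infinity while the combination stays bounded (for instance $\tfrac{1}{2}\odot[-k,-k]\oplus\tfrac{1}{2}\odot[k,k]=[0,0]$). In fact the statement is false as written: for $n=1$ the set $\textbf{S}=\{[x,x+1]:x\in\mathbb{R}\}\cup\{[0,0]\}$ is closed, yet
\[
\tfrac{k-1}{k}\odot[0,0]\oplus\tfrac{1}{k}\odot[k,k+1]=\left[1,1+\tfrac{1}{k}\right]\in \text{co}(\textbf{S})
\]
converges to $[1,1]\notin \text{co}(\textbf{S})$. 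Your fallback of ``absorbing'' vanishing-coefficient terms via $\lVert\delta_{j,k}\odot\widehat{\textbf{X}}_{j,k}\rVert_{I(\mathbb{R})^n}\leq\delta_{j,k}\lVert\widehat{\textbf{X}}_{j,k}\rVert_{I(\mathbb{R})^n}$ fails on exactly this example, where $\delta_{j,k}\to 0$ but the product tends to $[1,1]\neq\textbf{0}$. For what it is worth, the paper's own proof founders at the same spot in a cruder way: it asserts that because $\textbf{S}$ is closed the sequence $\{\widehat{\textbf{G}}_j^{k}\}$ ``must have a convergent subsequence,'' which conflates closedness with compactness. Both arguments become correct if $\textbf{S}$ is assumed closed \emph{and bounded}; under that hypothesis your Carath\'eodory-plus-Bolzano--Weierstrass route goes through cleanly and is the right way to repair the theorem.
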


\begin{proof}
Let $\{\widehat{\textbf{G}}_k\}$ be an arbitrary sequence in $\text{co(\textbf{S})}$ that converges to $\widehat{\textbf{G}} \in I(\mathbb{R})^n$, where $\widehat{\textbf{G}}_k=(\textbf{G}_{1_k},\textbf{G}_{2_k},\ldots,\textbf{G}_{n_k})$ and $\widehat{\textbf{G}}=(\textbf{G}_1,\textbf{G}_2,\ldots,\textbf{G}_n)$.\\
Since $\{\widehat{\textbf{G}}_k\} \in~\text{co(\textbf{S})}$,  there exists $\{\delta_{j}^k\} \subset \mathbb{R}_+$ and $\{\widehat{\textbf{G}}_j^{k}\} \in \textbf{S}$ such that 
\[
\widehat{\textbf{G}}_k= \bigoplus_{j=1}^{n}~\delta_{j}^{k}\odot\widehat{\textbf{G}}_j^{k}~\text{for all}~k \in \mathbb{N}~\text{with}~\sum_{j=1}^{n} \delta_{j}^{k}=1.\]
Also, $\delta_{j}^{k} \geq$ 0 with $\sum_{j=1}^{n}  \delta_{j}^{k}=1$ gives that $ \delta_{j}^{k}$ is a bounded sequence. Thus, we have 
\[
\delta_j^{k} \to \delta_j~\text{ such that }~\delta_j \geq 0~\text{ with }\sum_{j=1}^{n} \delta_j=1.
\]
Since $\textbf{S}$ is closed,  $\{\widehat{\textbf{G}}_j^{k}\} \in \textbf{S}$ must have a convergent subsequence. Assume that $\widehat{\textbf{G}}_j^{k} \to \widehat{\textbf{G}}_j$. Therefore, by closedness of $\textbf{S}$, 
\[ 
\widehat{\textbf{G}}_j \in \textbf{S}~\text{and}~ 
 \widehat{\textbf{G}}= \bigoplus_{j=1}^{n}\delta_j \odot \widehat{\textbf{G}}_j \in \text{co(\textbf{S})}.\]
Thus, $\widehat{\textbf{G}}_k \to \widehat{\textbf{G}}\in \text{co(\textbf{S})}.$ Hence, $\text{co(\textbf{S})}$ is a closed set.
\end{proof}


%

\begin{thm}{\emph{(Boundedness of convex hull of a set in $I(\mathbb{R})^n$).}} \label{boundedhull}
Let $\textbf{S}$ be a nonempty bounded set in $I(\mathbb{R})^n$. Then, the convex hull of $\textbf{S}$ is bounded.
\end{thm}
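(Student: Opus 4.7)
The plan is to exploit the bounds given by the definition of boundedness of $\textbf{S}$ and then show that any convex combination of elements of $\textbf{S}$ inherits these bounds componentwise, so the whole convex hull sits inside a single interval box in $I(\mathbb{R})^n$.

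First I would unpack the boundedness hypothesis: by Definition \ref{boundedset}, there exist $\widehat{\textbf{X}},\widehat{\textbf{Y}}\in I(\mathbb{R})^n$ with $\widehat{\textbf{X}}\preceq\widehat{\textbf{C}}\preceq\widehat{\textbf{Y}}$ for every $\widehat{\textbf{C}}\in\textbf{S}$. Pick an arbitrary $\widehat{\textbf{Z}}\in\text{co}(\textbf{S})$. By Definition \ref{cohull}, we can write
\[
\widehat{\textbf{Z}}=\bigoplus_{j=1}^{p}\delta_{j}\odot\widehat{\textbf{C}}_{j},\quad \widehat{\textbf{C}}_{j}\in\textbf{S},\;\delta_{j}\geq 0,\;\sum_{j=1}^{p}\delta_{j}=1.
\]

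Next I would establish the monotonicity of the operations $\delta_j\odot(\cdot)$ and $\oplus$ with respect to the dominance relation. Since each $\delta_j\geq 0$, the componentwise inequalities defining $\widehat{\textbf{X}}\preceq\widehat{\textbf{C}}_j\preceq\widehat{\textbf{Y}}$ survive scalar multiplication, giving $\delta_j\odot\widehat{\textbf{X}}\preceq\delta_j\odot\widehat{\textbf{C}}_j\preceq\delta_j\odot\widehat{\textbf{Y}}$. Iterating Note \ref{noteii} to add these inequalities one by one yields
\[
\bigoplus_{j=1}^{p}\delta_{j}\odot\widehat{\textbf{X}}\;\preceq\;\widehat{\textbf{Z}}\;\preceq\;\bigoplus_{j=1}^{p}\delta_{j}\odot\widehat{\textbf{Y}}.
\]

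Finally, because all $\delta_j$ are nonnegative, interval scalar multiplication and addition distribute over a common interval: $\bigoplus_{j=1}^{p}\delta_j\odot\widehat{\textbf{X}}=\bigl(\sum_{j=1}^{p}\delta_j\bigr)\odot\widehat{\textbf{X}}=\widehat{\textbf{X}}$, and likewise the upper expression collapses to $\widehat{\textbf{Y}}$. Hence $\widehat{\textbf{X}}\preceq\widehat{\textbf{Z}}\preceq\widehat{\textbf{Y}}$ for every $\widehat{\textbf{Z}}\in\text{co}(\textbf{S})$, so by Definition \ref{boundedset} the convex hull is bounded.

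The only subtle step is the collapse $\bigoplus_{j=1}^{p}\delta_j\odot\widehat{\textbf{X}}=\widehat{\textbf{X}}$: this fails for arbitrary intervals when coefficients can have mixed signs, but works here because all $\delta_j\geq 0$, so the componentwise identities $\sum_j\delta_j\underline{x}=\underline{x}$ and $\sum_j\delta_j\overline{x}=\overline{x}$ are legitimate. I would therefore be careful to point out that nonnegativity of the $\delta_j$ is exactly what lets scalar-plus-addition distribute in the interval setting; this is the only place where the argument would fail if transplanted thoughtlessly from the real-valued case.
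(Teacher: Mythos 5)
Your proposal is correct and follows essentially the same route as the paper: bound each $\widehat{\textbf{C}}_j$ between $\widehat{\textbf{X}}$ and $\widehat{\textbf{Y}}$, scale by the nonnegative $\delta_j$, add, and use $\sum_j\delta_j=1$ to collapse $\bigoplus_j\delta_j\odot\widehat{\textbf{X}}$ back to $\widehat{\textbf{X}}$ (and likewise for $\widehat{\textbf{Y}}$). Your explicit remark that the collapse relies on the nonnegativity of the $\delta_j$ is a detail the paper leaves implicit, but the argument is identical.
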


\begin{proof}
Since $\textbf{S}$ is bounded, there exist $\widehat{\textbf{X}}$ and $\widehat{\textbf{Y}}$ in $I(\mathbb{R})^n$ such that
\begin{align*}
&\widehat{\textbf{X}} \preceq \widehat{\textbf{C}}_j \preceq \widehat{\textbf{Y}}~\text{for all}~\widehat{\textbf{C}}_j~\text{in}~ \textbf{S}.
\end{align*}
Therefore, for all $\widehat{\textbf{C}}_j$ in $\textbf{S}$, $\delta_j \geq 0,$ and $\sum_{j=1}^{p} \delta_{j}=1$, we obtain 
\begin{align*}
&\bigoplus_{j=1}^{p}\delta_j \odot\widehat{\textbf{X}} ~\preceq~ \bigoplus_{j=1}^{p}\delta_j \odot \widehat{\textbf{C}}_j~\preceq~ \bigoplus_{j=1}^{p}\delta_j \odot \widehat{\textbf{Y}}\\
\implies&\bigoplus_{j=1}^{p}\delta_j \odot\widehat{\textbf{X}} ~\preceq ~\widehat{\textbf{Z}}~\preceq~\bigoplus_{j=1}^{p}\delta_j \odot \widehat{\textbf{Y}}\\
\implies&\widehat{\textbf{X}}~\preceq~\widehat{\textbf{Z}}~\preceq~\widehat{\textbf{Y}},
\end{align*}
where $\widehat{\textbf{Z}}=\bigoplus_{j=1}^{p}\delta_j \odot \widehat{\textbf{C}}_j \in \text{co(\textbf{S})}$. Since $\widehat{\textbf{Z}}$ is arbitrary, $\text{co(\textbf{S})}$ is a bounded set.
\end{proof}




\begin{dfn}{($gH$-subgradient \cite{Ghosh2020lasso}).}
Let $\textbf{T} : \mathcal{Y}\rightarrow I(\mathbb{R})$ be a convex IVF on a convex subset $\mathcal{Y}$ of $\mathbb{R}^{n}$. Then, an element $\widehat{\textbf{G}}=(\textbf{G}_{1},\textbf{G}_{2},\ldots,\textbf{G}_{n}) \in I(\mathbb{R})^{n}$ is said to be a \emph{$gH$-subgradient} of $\textbf{T}$ at $\bar{y}$ if
\[
(y-\bar{y})^\top\odot \widehat{\textbf{G}}~\preceq~\textbf{T}(y)\ominus_{gH}\textbf{T}(\bar{y}).
\]
The collection of all subgradients of $\textbf{T}$ at $\bar{y}$ is called $gH$-subdifferential and is denoted by $ \partial\textbf{T}(\bar{y})$.
\end{dfn}

%
%

\begin{thm}(See \cite{Ghosh2020lasso}).\label{diffder}
Let $\mathcal{Y}$ be a nonempty subset of $\mathbb{R}^n$ and $\textbf{T}:\mathcal{Y}\to I(\mathbb{R})$ be $gH$-differentiable IVF at $\bar{y}\in \mathcal{X}$. Then, $\textbf{T}$ has $gH$-directional derivative at $\bar{y}$ for every direction $h \in \mathbb{R}^n$ and
\[\partial\textbf{T}(\bar{y})=\{\nabla \textbf{T}(\bar{y})\}.
\]
\end{thm}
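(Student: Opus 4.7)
The plan is to establish two things: (i) $\textbf{T}_{\mathscr{D}}(\bar{y})(h)$ exists for every $h\in\mathbb{R}^n$, and (ii) $\partial\textbf{T}(\bar{y})=\{\nabla\textbf{T}(\bar{y})\}$. For (i), from the construction of the $gH$-gradient via $gH$-partial derivatives and the $gH$-linearization of $\textbf{T}$ around $\bar{y}$, the limit in Definition \ref{ddd} evaluates to $h^\top\odot\nabla\textbf{T}(\bar{y})$; a componentwise reduction to the real-valued functions $\underline{t}$ and $\overline{t}$ (both differentiable at $\bar{y}$ as a consequence of $gH$-differentiability of $\textbf{T}$) gives a clean verification.

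For (ii), I first show $\nabla\textbf{T}(\bar{y})\in\partial\textbf{T}(\bar{y})$ using convexity (which is implicit in the definition of the $gH$-subdifferential). For $y\in\mathcal{Y}$ and $\delta\in(0,1]$,
\[
\textbf{T}(\bar{y}+\delta(y-\bar{y}))\preceq(1-\delta)\odot\textbf{T}(\bar{y})\oplus\delta\odot\textbf{T}(y).
\]
Applying Note \ref{notei} to shift $\textbf{T}(\bar{y})$ across the dominance and scaling by $\tfrac{1}{\delta}>0$ gives
\[
\tfrac{1}{\delta}\odot\bigl(\textbf{T}(\bar{y}+\delta(y-\bar{y}))\ominus_{gH}\textbf{T}(\bar{y})\bigr)\preceq\textbf{T}(y)\ominus_{gH}\textbf{T}(\bar{y}).
\]
Letting $\delta\to 0^+$ and invoking Lemma \ref{compdd} to preserve $\preceq$ in the limit, the left-hand side tends to $(y-\bar{y})^\top\odot\nabla\textbf{T}(\bar{y})$ by (i), which is exactly the subgradient inequality for $\nabla\textbf{T}(\bar{y})$.

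For uniqueness, let $\widehat{\textbf{G}}\in\partial\textbf{T}(\bar{y})$ and fix $h\in\mathbb{R}^n$. Substituting $y=\bar{y}+\delta h$ (for $\delta>0$ small enough to keep $y\in\mathcal{Y}$) into the subgradient inequality, dividing by $\delta$, and passing to the limit via Lemma \ref{compdd} yields $h^\top\odot\widehat{\textbf{G}}\preceq h^\top\odot\nabla\textbf{T}(\bar{y})$; repeating with $-h$ in place of $h$ (and using that scalar multiplication by $-1$ reverses $\preceq$ on $I(\mathbb{R})$) gives the reverse dominance. Specializing to $h=e_j$, the $j$-th standard basis vector, then forces $\textbf{G}_j=D_j\textbf{T}(\bar{y})$ for every $j$, so $\widehat{\textbf{G}}=\nabla\textbf{T}(\bar{y})$.

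The main obstacle is the careful algebraic handling of $\ominus_{gH}$, $\odot$, and $\preceq$ simultaneously. Because $\textbf{X}\oplus(-\textbf{X})\neq\textbf{0}$ for a nondegenerate interval $\textbf{X}$, the rearrangement of the convexity inequality cannot be effected by naive subtraction; Lemma \ref{00}(i), equivalently Note \ref{notei}, is indispensable here. A secondary technical point is ensuring $\bar{y}\pm\delta h\in\mathcal{Y}$ for all sufficiently small $\delta>0$, which I would resolve by treating $\bar{y}$ as an interior point of $\mathcal{Y}$, as implicit in the hypothesis of $gH$-differentiability at $\bar{y}$.
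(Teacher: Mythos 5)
The paper itself offers no proof of this statement---it is imported verbatim from \cite{Ghosh2020lasso}---so your attempt can only be judged on its own terms. Your overall architecture is the natural one and is consistent with the manipulations the paper uses elsewhere (the convexity inequality rearranged via Lemma \ref{00}(\ref{3_3})/Note \ref{notei} and divided by $\delta$, exactly as in the proof of Theorem \ref{non1}; the $\pm h$ squeeze plus antisymmetry of $\preceq$ for uniqueness; the passage to the limit via Lemma \ref{compdd}). Your two caveats are also well placed: convexity must be assumed for $\partial\textbf{T}(\bar{y})$ to be defined at all, and interiority of $\bar{y}$ is genuinely needed for the $-h$ step (without it the claim already fails in the degenerate real case, e.g.\ $f(x)=x$ on $[0,1]$ at $\bar{y}=0$, where $\partial f(0)=(-\infty,1]$).

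The load-bearing gap is step (i): the identity $\textbf{T}_{\mathscr{D}}(\bar{y})(h)=h^{\top}\odot\nabla\textbf{T}(\bar{y})$ does \emph{not} follow from a ``componentwise reduction to $\underline{t}$ and $\overline{t}$,'' and is false if $gH$-differentiability is read merely as existence of all $gH$-partial derivatives. Concretely, when $\underline{t},\overline{t}$ are differentiable one computes
\[
h^{\top}\odot\nabla\textbf{T}(\bar{y})=\Big[\textstyle\sum_{j}\min\{h_j\partial_j\underline{t},\,h_j\partial_j\overline{t}\},\ \sum_{j}\max\{h_j\partial_j\underline{t},\,h_j\partial_j\overline{t}\}\Big],
\]
whereas the limit in Definition \ref{ddd} gives $\big[\min\{h^{\top}\nabla\underline{t},h^{\top}\nabla\overline{t}\},\ \max\{h^{\top}\nabla\underline{t},h^{\top}\nabla\overline{t}\}\big]$; since a sum of minima is at most the minimum of sums, the first interval strictly contains the second whenever the signs of $h_j(\partial_j\underline{t}-\partial_j\overline{t})$ are not all equal (take $\textbf{T}(y)=[y_1,y_2]$ on $\{y_1<y_2\}$ and $h=(1,1)$: the directional derivative is $[1,1]$ but $h^{\top}\odot\nabla\textbf{T}=[0,2]$, and then $\nabla\textbf{T}(\bar{y})$ is not even a $gH$-subgradient). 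Both your inclusion $\nabla\textbf{T}(\bar{y})\in\partial\textbf{T}(\bar{y})$ and your uniqueness argument route through this identity, so the gap infects the whole proof. The repair is to take ``$gH$-differentiable'' in the Fr\'echet/G\^ateaux sense of \cite{Ghosh2019derivative}, under which $\textbf{T}(\bar{y}+\delta h)\ominus_{gH}\textbf{T}(\bar{y})$ is approximated by the linear IVF $h\mapsto h^{\top}\odot\nabla\textbf{T}(\bar{y})$ by hypothesis rather than by a componentwise computation; with that definition in force, the rest of your argument goes through.
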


\begin{example} \label{rrr}
Let $\mathcal{Y}$ be a convex subset of $\mathbb{R}^n$ and an IVF $\textbf{T}: \mathcal{Y} \rightarrow I(\mathbb{R})$ be defined by 
\begin{eqnarray}\label{crs_1}
\textbf{T}(x) = \lVert y \rVert_{0} \odot \textbf{C},
\end{eqnarray} 
where $\lVert y \rVert_{0}$ is the number of nonzero components in $y$ and $\textbf{0} \prec \textbf{C}\in I(\mathbb{R})$. Then, $gH$-subgradient of $\textbf{T}$ at $\bar{y}$ $\in \mathcal{X}$ is obtain by the following cases: \\
\begin{enumerate}[$\bullet$ \textbf{Case} 1.]
    \item \label{c1} If $\bar{y} = 0$ and $\widehat{\textbf{G}} \in \partial \textbf{T}(\bar{y})$, then for all $y\in \mathbb{R}^n$
    
     \[ (y-\bar{y})^\top \odot \widehat{\textbf{G}} \preceq \textbf{T}(y) \ominus_{gH} \textbf{T}(\bar{y})
      \implies y^\top \odot \widehat{\textbf{G}} \preceq  \lVert y\rVert_{0} \odot \textbf{C}.\]
    
    It is possible only for $\widehat{\textbf{G}}=\textbf{0}$. Therefore, $\partial \textbf{T}(0) = \{ \textbf{0}\}$. \\

   \item \label{c2} If $\bar{y} \neq 0$ and $\widehat{\textbf{G}} \in \partial \textbf{T}(\bar{y})$, then
   \begin{eqnarray} \label{mod_1}
      (y-\bar{y})^\top \odot\widehat{\textbf{G}} \preceq \lVert y\rVert_{0} \odot \textbf{C} \ominus_{gH} \lVert \bar{y}\rVert_{0} \odot\textbf{C}~\text{for all}~y\in \mathbb{R}^n.
   \end{eqnarray}
  Let $ \lVert y\rVert_{0}=p$ and $\lVert \bar{y}\rVert_{0}=q$, where $p$ and $q$ are natural numbers such that $p, q \leq n$. Then, we have
   \begin{eqnarray} \label{mod_2}
   \bigoplus_{i=1}^{n}(y_i-\bar{y}_i)\odot \textbf{G}_i \preceq [\min\{(p-q)\underline{c},~(p-q)\overline{c}\},~\max\{(p-q)\underline{c},~(p-q)\overline{c}\}].
   \end{eqnarray}
 Without loss of generality, let the first $m$ components of $(y-x)$ be nonnegative and rest of  $(n-m)$ number of components be nonpositive. Then, from (\ref{mod_2}), 
 \begin{align*}
 &\bigoplus_{i=1}^{m}(y_i-\bar{y}_i)\odot \textbf{G}_i  \bigoplus_{j=m+1}^{n}(y_j-\bar{y}_j)\odot \textbf{G}_j \\
 \preceq~& [\min\{(p-q)\underline{c},~(p-q)\overline{c}\},~
 \max\{(p-q)\underline{c},~(p-q)\overline{c}\}].
 \end{align*}
 Therefore, we get
 \begin{eqnarray*}
 &&\sum_{i=1}^{m}(y_i-\bar{y}_i) \underline{g_i} + \sum_{j=m+1}^{n}(y_j-\bar{y}_j) \overline{g_j} \leq \min\{(p-q)\underline{c},~(p-q)\overline{c}\}~\text{and}\\
  &&\sum_{i=1}^{m}(y_i-\bar{y}_i) \overline{g_i} + \sum_{j=m+1}^{n}(y_j-\bar{y}_j) \underline{g_j} \leq \max\{(p-q)\underline{c},~(p-q)\overline{c}\}.
 \end{eqnarray*}
 The above inequalities hold only when $\underline{g_i},\overline{g_i},\underline{g_j}$ and $\overline{g_j}$ are all zeros and $q \leq p$. But $q \leq p$ for all $y$ holds only when $\bar{y}=0$. Hence, we have a contradiction to $\bar{y}\neq 0$.
 \end{enumerate}
 From Case \ref{c1} and Case \ref{c2}, it is clear that $gH$-subgradient of IVF (\ref{crs_1}) exists only at $\bar{y}$ = 0 and $\partial \textbf{T}(0) = \{ \textbf{0}\}. $
\end{example}


\begin{note}
  The IVF (\ref{crs_1}) of Example \ref{rrr} does not satisfy the property $\textbf{T}(\delta y)=\delta \odot \textbf{T}(Y)$. For instance, consider $y=(1,0,\ldots,0)$ and $\delta=\frac{1}{2}$. Then,
 \begin{align*}
 &\textbf{T}(\delta y)=\textbf{C}~
 \text{and}~\delta\odot\textbf{T}(y) = \frac{1}{2}\odot \textbf{C}.
  \end{align*}
  Hence, $\textbf{T}(\lambda y) \neq \lambda \odot \textbf{T}(y)$. 
 \end{note}

 \begin{note}  
  The IVF (\ref{crs_1}) of Example \ref{rrr} is  not convex. For instance, consider $y_{1}=(1,0,\ldots,0)$ and $y_{2}=(0,1,\ldots,0)$. Then, for $\delta_{1}= \delta_{2}=\frac{1}{2}$, we have
 \[ \delta_1\odot\textbf{T}(y_1)\oplus\delta_2\odot\textbf{T}(y_2) = \textbf{C} \prec 2\odot\textbf{C} = \textbf{F}(\delta_1 y_1+\delta_2 y_2). \]
 Thus, the IVF $\textbf{T}$ is not convex.
\end{note}


\begin{lem}
Let $\mathcal{Y}$ be a subset of $\mathbb{R}^{n}$ and $\textbf{T}:\mathcal{Y}\rightarrow I(\mathbb{R})$ be an IVF. Then, for any $\bar{y}\in\text{dom}(\textbf{T})$ and $\delta \geq$ 0
\[
\partial(\delta\odot\textbf{T})(\bar{y})=\delta\odot\partial\textbf{T}(\bar{y}),\]
where $\text{dom}(\delta\odot\textbf{T})=\text{dom}(\textbf{T})$.
\end{lem}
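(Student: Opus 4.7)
The plan is to establish the equality by double set inclusion, splitting into the degenerate case $\delta=0$ and the generic case $\delta>0$. Throughout I will rely on the scalar–interval compatibility rule $\delta\odot(\textbf{A}\ominus_{gH}\textbf{B})=(\delta\odot\textbf{A})\ominus_{gH}(\delta\odot\textbf{B})$ for $\delta\ge 0$, and the associativity of scalar multiplication with the dot-product notation $(y-\bar y)^{\top}\odot\widehat{\textbf{G}}$.

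First, I would dispose of the trivial case $\delta=0$. Here $\delta\odot\textbf{T}\equiv\textbf{0}$, so $\widehat{\textbf{G}}\in\partial(\delta\odot\textbf{T})(\bar y)$ reduces to
\[
(y-\bar y)^{\top}\odot\widehat{\textbf{G}}\preceq\textbf{0}\ominus_{gH}\textbf{0}=\textbf{0}\quad\text{for all }y\in\mathcal{Y}.
\]
Testing with $y=\bar y\pm e_i$ for each coordinate direction forces $\textbf{G}_i=\textbf{0}$, hence $\widehat{\textbf{G}}=\widehat{\textbf{0}}$; conversely $\widehat{\textbf{0}}$ trivially satisfies the subgradient inequality. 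Since $0\odot\partial\textbf{T}(\bar y)=\{\widehat{\textbf{0}}\}$ as well (using that $0\odot\widehat{\textbf{G}}=\widehat{\textbf{0}}$ for any interval vector), both sides coincide with $\{\widehat{\textbf{0}}\}$.

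For the main case $\delta>0$, I would argue by a chain of equivalences. Starting from $\widehat{\textbf{G}}\in\partial(\delta\odot\textbf{T})(\bar y)$, by definition
\[
(y-\bar y)^{\top}\odot\widehat{\textbf{G}}\preceq(\delta\odot\textbf{T}(y))\ominus_{gH}(\delta\odot\textbf{T}(\bar y))\quad\text{for all }y\in\mathcal{Y}.
\]
Using the $gH$-difference scaling identity, the right-hand side equals $\delta\odot\big(\textbf{T}(y)\ominus_{gH}\textbf{T}(\bar y)\big)$. Since $\delta>0$, I multiply both sides of the dominance relation by $1/\delta$; the dominance relation is preserved because $1/\delta>0$ (this uses that $\textbf{X}\preceq\textbf{Y}\Rightarrow c\odot\textbf{X}\preceq c\odot\textbf{Y}$ for $c\ge 0$, which follows directly from the componentwise definition of $\preceq$). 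This yields
\[
\tfrac{1}{\delta}\odot\big((y-\bar y)^{\top}\odot\widehat{\textbf{G}}\big)=(y-\bar y)^{\top}\odot\big(\tfrac{1}{\delta}\odot\widehat{\textbf{G}}\big)\preceq\textbf{T}(y)\ominus_{gH}\textbf{T}(\bar y),
\]
which is exactly the statement $\tfrac{1}{\delta}\odot\widehat{\textbf{G}}\in\partial\textbf{T}(\bar y)$, i.e.\ $\widehat{\textbf{G}}\in\delta\odot\partial\textbf{T}(\bar y)$. Every step is reversible (multiply by $\delta>0$ instead of $1/\delta$), giving the converse inclusion, and the domain equality $\mathrm{dom}(\delta\odot\textbf{T})=\mathrm{dom}(\textbf{T})$ ensures the underlying point $\bar y$ is admissible in both subdifferentials.

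The only technical obstacle is justifying the two algebraic identities used in the above chain: (a) $\delta\odot(\textbf{T}(y)\ominus_{gH}\textbf{T}(\bar y))=(\delta\odot\textbf{T}(y))\ominus_{gH}(\delta\odot\textbf{T}(\bar y))$ for $\delta\ge 0$, which follows from the closed-form description of $gH$-difference by expanding $\min$ and $\max$ with the nonnegative scalar $\delta$ pulled outside; and (b) $(y-\bar y)^{\top}\odot(c\odot\widehat{\textbf{G}})=c\odot\big((y-\bar y)^{\top}\odot\widehat{\textbf{G}}\big)$ for $c\ge 0$, which reduces to componentwise distributivity of real scalars over interval multiplication. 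Neither is deep, but both are needed to make the equivalences rigorous, so I would state them explicitly (or cite them from the interval-arithmetic preliminaries) before running the chain of equivalences.
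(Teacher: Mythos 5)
Your proof is correct and follows essentially the same route as the paper: a chain of equivalences built on the compatibility of nonnegative scalar multiplication with $\ominus_{gH}$ and with the dominance relation $\preceq$, plus the commutation of the scalar with the dot product $(y-\bar y)^{\top}\odot\widehat{\textbf{G}}$. You are, however, more careful in two places where the paper is silent. First, the paper writes a single chain of ``iff''s for all $\delta\ge 0$ and stops at ``$\widehat{\textbf{G}}\in\partial\textbf{T}(\bar y)\iff\delta\odot\widehat{\textbf{G}}\in\partial(\delta\odot\textbf{T})(\bar y)$''; this gives $\delta\odot\partial\textbf{T}(\bar y)\subseteq\partial(\delta\odot\textbf{T})(\bar y)$ directly, but the reverse inclusion still needs the observation that every element of $\partial(\delta\odot\textbf{T})(\bar y)$ can be written as $\delta\odot\widehat{\textbf{G}}$ — which you supply explicitly by dividing by $\delta>0$. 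Second, you isolate $\delta=0$, where the backward implication in the paper's first ``iff'' genuinely fails (multiplying by $0$ destroys information), so a separate argument is needed; the paper does not address this. One small caveat in your $\delta=0$ case: testing with $y=\bar y\pm e_i$ presupposes that these points lie in $\mathcal{Y}$ (or in $\operatorname{dom}\textbf{T}$), which is not guaranteed for an arbitrary subset $\mathcal{Y}\subseteq\mathbb{R}^n$; in fully degenerate situations (e.g.\ $\operatorname{dom}\textbf{T}=\{\bar y\}$) the asserted equality at $\delta=0$ actually fails, so either restrict to points where the subgradient inequality is non-vacuous or note that the lemma is intended for $\delta>0$ together with the trivial convention at $\delta=0$.
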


\begin{proof}
Let $\widehat{\textbf{G}} \in \partial\textbf{T}(\bar{y})$. Then, for any $y\in\text{dom}(\textbf{T})$, 
\begin{eqnarray*}
&&(y-\bar{y})^\top\odot \widehat{\textbf{G}}\preceq\textbf{T}(y)\ominus_{gH}\textbf{T}(\bar{y})\\
&\iff&\delta\odot((y-\bar{y})^\top\odot \widehat{\textbf{G}})\preceq\delta\odot\left(\textbf{T}(y)\ominus_{gH}\textbf{T}(\bar{y})\right)\text{ for } \delta \geq 0\\
&\iff& (y-\bar{y})^\top\odot(\delta\odot \widehat{\textbf{G}})\preceq\delta\odot(\textbf{T}(y)\ominus_{gH}\textbf{T}(\bar{y}))\\
&\iff& (y-\bar{y})^\top\odot(\delta\odot \widehat{\textbf{G}})\preceq(\delta\odot\textbf{T})(y)\ominus_{gH}(\delta\odot\textbf{T})(\bar{y}))\\
&\iff&\delta\odot\widehat{\textbf{G}}\in\partial(\delta\odot\textbf{T})(\bar{y}).
\end{eqnarray*}
\end{proof}

\begin{lem}\label{sum}
Let $\textbf{T}_1,\textbf{T}_2:\mathcal{Y}\to I(\mathbb{R})$ be two convex IVFs on the nonempty convex set $\mathcal{Y}\subseteq \mathbb{R}^n $. Then, $\textbf{T}_1\oplus\textbf{T}_2$ is a convex IVF on $\mathcal{Y}.$
\end{lem}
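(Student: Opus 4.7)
The plan is to unfold the definition of convexity directly. I would fix arbitrary points $y_1, y_2 \in \mathcal{Y}$ and scalars $\delta_1, \delta_2 \in [0,1]$ with $\delta_1 + \delta_2 = 1$, and write down the two interval convexity inequalities
\[
\textbf{T}_i(\delta_1 y_1 + \delta_2 y_2) \preceq \delta_1 \odot \textbf{T}_i(y_1) \oplus \delta_2 \odot \textbf{T}_i(y_2), \quad i = 1, 2,
\]
that are guaranteed by the hypothesis that each $\textbf{T}_i$ is convex on $\mathcal{Y}$.

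The next step is to add these two inequalities under $\oplus$. For this I would first isolate the subsidiary monotonicity fact that, for any $\textbf{A},\textbf{B},\textbf{C},\textbf{D}\in I(\mathbb{R})$, the relations $\textbf{A}\preceq \textbf{B}$ and $\textbf{C}\preceq \textbf{D}$ together imply $\textbf{A}\oplus\textbf{C}\preceq \textbf{B}\oplus\textbf{D}$. This follows immediately from the endpoint characterisation of $\preceq$ and Moore's rule $[\underline{a},\overline{a}]\oplus[\underline{c},\overline{c}]=[\underline{a}+\underline{c},\overline{a}+\overline{c}]$, and it is essentially the ingredient already used in Note \ref{noteii}. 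Applying it to the two convexity inequalities gives
\[
\textbf{T}_1(\delta_1 y_1 + \delta_2 y_2) \oplus \textbf{T}_2(\delta_1 y_1 + \delta_2 y_2) \preceq \bigl(\delta_1 \odot \textbf{T}_1(y_1) \oplus \delta_2 \odot \textbf{T}_1(y_2)\bigr) \oplus \bigl(\delta_1 \odot \textbf{T}_2(y_1) \oplus \delta_2 \odot \textbf{T}_2(y_2)\bigr).
\]

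The final step is a regrouping. Since $\delta_1,\delta_2 \geq 0$, non-negative scalar multiplication distributes over $\oplus$ (again immediate at the endpoint level), so the right-hand side can be rewritten as $\delta_1 \odot \bigl(\textbf{T}_1(y_1)\oplus \textbf{T}_2(y_1)\bigr) \oplus \delta_2 \odot \bigl(\textbf{T}_1(y_2)\oplus \textbf{T}_2(y_2)\bigr)$. Using the pointwise definition $(\textbf{T}_1\oplus\textbf{T}_2)(y) := \textbf{T}_1(y)\oplus \textbf{T}_2(y)$ on both sides then yields exactly
\[
(\textbf{T}_1\oplus\textbf{T}_2)(\delta_1 y_1 + \delta_2 y_2) \preceq \delta_1 \odot (\textbf{T}_1\oplus\textbf{T}_2)(y_1) \oplus \delta_2 \odot (\textbf{T}_1\oplus\textbf{T}_2)(y_2),
\]
which is the convexity of $\textbf{T}_1\oplus\textbf{T}_2$ on the convex set $\mathcal{Y}$.

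There is no genuine obstacle here; the only points worth recording carefully are the monotonicity of $\oplus$ with respect to $\preceq$ and the distributivity of non-negative scalar multiplication over $\oplus$, both of which are endpoint-level observations consistent with the properties already invoked throughout Section \ref{section2}.
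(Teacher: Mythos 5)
Your proposal is correct and follows essentially the same route as the paper's own proof: unfold the convexity inequalities for $\textbf{T}_1$ and $\textbf{T}_2$, add them using the monotonicity of $\oplus$ with respect to $\preceq$, and regroup via distributivity of nonnegative scalar multiplication over $\oplus$. The only difference is that you make explicit the two endpoint-level facts that the paper uses silently, which is a harmless (and arguably welcome) addition.
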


\begin{proof}
Let $\textbf{T}(y)=\textbf{T}_1(y)\oplus \textbf{T}_2(y)$. Then, for any $y_1,y_2 \in \mathcal{Y}$ and $\delta\in [0,1]$,
\begin{eqnarray*}
\textbf{T}(\delta y_1+(1-\delta) y_2)&=&\textbf{T}_1(\delta y_1+(1-\delta) y_2)\oplus \textbf{T}_2(\delta y_1+(1-\delta) y_2)\\
&\preceq& \delta \odot \textbf{T}_1(y_1)\oplus (1-\delta)\odot\textbf{T}_1(y_2)\oplus \delta \odot \textbf{T}_2(y_1) \oplus (1-\delta)\odot \textbf{T}_2(y_2)\\
&=&\delta \odot (\textbf{T}_1(y_1)\oplus\textbf{T}_2(y_1))\oplus (1-\delta)\odot(\textbf{T}_1(y_2)\oplus\textbf{T}_2(y_2))\\
&=&\delta \odot \textbf{T}(y_1)\oplus (1-\delta)\odot\textbf{T}(y_2).
\end{eqnarray*}
Thus, $\textbf{T}_1\oplus\textbf{T}_2$ is a convex IVF on $\mathcal{Y}$.
\end{proof}

\begin{thm} {\emph{($gH$-directional derivative of the maximum  function).}} \label{dd_max}
Let $\mathcal{Y}$ be a nonempty convex subset of $\mathbb{R}^n$. Let $A$ be any finite set of indices. For each $i \in A,$ let $\textbf{G}_{i} : \mathcal{Y} \rightarrow \overline{I(\mathbb{R})}$ be a convex and $gH$-continuous IVF such that $\textbf{G}_{i _\mathscr{D}}(\bar{y};d)$ exists for all $\bar{y} \in \mathcal{Y}$. Let for each $y\in \mathcal{Y}$, the set $\{\textbf{G}_i(y):i \in A\}$ is a set of comparable intervals and define
\[
\textbf{G}(y) = \underset{i \in A}{\max}~\textbf{G}_{i}(y).\]
Then, for any $\bar{y} \in \mathcal{Y}$ and $d \in \mathcal{Y}$,
\[
\textbf{G}_{\mathscr{D}}(\bar{y})(d) = \underset{i \in I(\bar{y})}{\max}~ {\textbf{G}} _{i _\mathscr{D}}(\bar{y};d),~\text{where}~I(\bar{y}) = \{ i\in A: \textbf{G}_{i}(\bar{y}) = \textbf{G}(\bar{y})\}.\]
\end{thm}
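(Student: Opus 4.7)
The plan is to sandwich $\textbf{G}_\mathscr{D}(\bar{y})(d)$ against $M := \max_{i \in I(\bar{y})} \textbf{G}_{i_\mathscr{D}}(\bar{y};d)$, proving \emph{en route} that the limit defining $\textbf{G}_\mathscr{D}(\bar{y})(d)$ exists. For $\delta > 0$ abbreviate the difference quotients
\[
\phi_i(\delta) = \tfrac{1}{\delta}\odot\bigl(\textbf{G}_i(\bar{y}+\delta d)\ominus_{gH}\textbf{G}_i(\bar{y})\bigr),\qquad \phi(\delta) = \tfrac{1}{\delta}\odot\bigl(\textbf{G}(\bar{y}+\delta d)\ominus_{gH}\textbf{G}(\bar{y})\bigr).
\]

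The easy estimate is that for any $i\in I(\bar{y})$ one has $\textbf{G}_i(\bar{y}+\delta d)\preceq \textbf{G}(\bar{y}+\delta d)$ (since $\textbf{G}$ is the pointwise maximum) while $\textbf{G}_i(\bar{y}) = \textbf{G}(\bar{y})$ (since $i\in I(\bar{y})$). Subtracting the common term via Note \ref{noteii} gives $\phi_i(\delta)\preceq \phi(\delta)$ for every $\delta>0$.

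The main obstacle is a localization claim: there exists $\delta_0>0$ such that whenever $0<\delta<\delta_0$ and $\textbf{G}(\bar{y}+\delta d) = \textbf{G}_{i^*(\delta)}(\bar{y}+\delta d)$, the index $i^*(\delta)$ lies in $I(\bar{y})$. To prove it, fix $j\notin I(\bar{y})$ and any $j_0\in I(\bar{y})$. Comparability at $\bar{y}$ together with $\textbf{G}_j(\bar{y})\neq \textbf{G}(\bar{y})=\textbf{G}_{j_0}(\bar{y})$ forces a strict inequality on at least one endpoint, i.e.\ $\overline{g_j}(\bar{y}) < \overline{g_{j_0}}(\bar{y})$ or $\underline{g_j}(\bar{y}) < \underline{g_{j_0}}(\bar{y})$. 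By $gH$-continuity of $\textbf{G}_j$ and $\textbf{G}_{j_0}$ that strict endpoint inequality survives on a neighborhood of $\bar{y}$, and the standing comparability of $\{\textbf{G}_i(\bar{y}+\delta d):i\in A\}$ then leaves only $\textbf{G}_j(\bar{y}+\delta d)\prec \textbf{G}_{j_0}(\bar{y}+\delta d)$ for all small $\delta>0$, disqualifying $j$ as a maximizer. Finiteness of $A$ produces a single threshold $\delta_0$ uniform in $j$.

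Given the localization, take any sequence $\delta_k\to 0^+$ with $\delta_k<\delta_0$; pigeonhole on the finite set $I(\bar{y})$ extracts a subsequence along which $i^*(\delta_k)\equiv i^*$, and on that subsequence $\phi(\delta_k)=\phi_{i^*}(\delta_k)\to \textbf{G}_{i^*_\mathscr{D}}(\bar{y};d)\preceq M$. Applying Lemma \ref{compdd} to $\phi_i(\delta_k)\preceq\phi_{i^*}(\delta_k)$ for each $i\in I(\bar{y})$ gives $\textbf{G}_{i_\mathscr{D}}(\bar{y};d)\preceq \textbf{G}_{i^*_\mathscr{D}}(\bar{y};d)$, whence $M=\textbf{G}_{i^*_\mathscr{D}}(\bar{y};d) = \lim_k\phi(\delta_k)$. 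Since every subsequence of every sequence $\delta_k\to 0^+$ admits a further subsequence on which $\phi\to M$, the one-sided limit of $\phi(\delta)$ as $\delta\to 0^+$ exists and equals $M$, which is the claim.
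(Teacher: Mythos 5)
Your proof is correct and follows the same overall architecture as the paper's: the easy estimate $\phi_i(\delta)\preceq\phi(\delta)$ for $i\in I(\bar y)$ giving one inequality, and a localization of the active index set ($I(\bar y+\delta d)\subseteq I(\bar y)$ for small $\delta$) giving the other. Two points where your execution differs from, and in fact improves on, the paper's are worth noting. First, the paper proves the localization by contradiction with a sequence $y_k\to\bar y$ and the phrase ``since $I(y_k)$ is closed, $i_k\to\bar i\in I(y_k)$,'' which is loose — $I(y_k)$ is a finite index set varying with $k$, and what is really meant is a pigeonhole extraction of a constant index; your version, which fixes $j\notin I(\bar y)$, extracts a strict endpoint inequality against some $j_0\in I(\bar y)$, propagates it by continuity of $\underline{g},\overline{g}$, and uses comparability to conclude $\textbf{G}_j\prec\textbf{G}_{j_0}$ near $\bar y$, is cleaner and fully rigorous. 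Second, the paper simply writes $\lim_{\delta\to 0^+}$ of the difference quotient of $\textbf{G}$ and names it $\textbf{G}_\mathscr{D}(\bar y)(d)$ without showing that this limit exists; your subsequence argument (every sequence $\delta_k\to 0^+$ has a further subsequence with a constant maximizer $i^*\in I(\bar y)$, along which $\phi=\phi_{i^*}\to\textbf{G}_{i^*_\mathscr{D}}(\bar y;d)=M$) actually establishes existence of the $gH$-directional derivative of $\textbf{G}$ rather than assuming it. The only cosmetic gap is that you could note explicitly that $I(y)\neq\emptyset$ (the maximum of a finite comparable set is attained, per the paper's remark after Definition of max/min), which both proofs use implicitly.
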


\begin{proof}

Let $\bar{y}\in\mathcal{Y}$ and $d \in\mathcal{Y}$ such that $\bar{y}+ \delta d \in\mathcal{Y}$ for $\delta >$ 0. Then,
\begin{align} \label{0_1}
 &\textbf{G} _{i}(\bar{y} + \delta d) \preceq \textbf{G}(\bar{y} + \delta d) ~\text{for all}~i \in A \nonumber\\
\text{or,}~& \textbf{G} _{i}(\bar{y} + \delta d)\ominus _{gH} \textbf{G} (\bar{y}) \preceq\textbf{G}(\bar{y} + \delta d) \ominus _{gH} \textbf{G} (\bar{y})~\text{by Note~\ref{noteii}, for all}~i\in A \nonumber\\
\text{or,}~&\textbf{G} _{i}(\bar{y} + \delta d) \ominus _{gH} \textbf{G} _{i} (\bar{y}) \preceq \textbf{G}(\bar{y} + \delta d) \ominus _{gH} \textbf{G} (\bar{y})~\text{for all}~i\in I(\bar{y}) \nonumber \\
\text{or,}~& \lim_{\delta \to 0+} \frac{1}{\delta} \odot (\textbf{G} _{i}(\bar{y} + \delta d) \ominus _{gH} \textbf{G} _{i}(\bar{y})) \preceq \lim_{\delta \to 0+} \frac{1}{\delta} \odot (\textbf{G} (\bar{y} + \delta d) \ominus _{gH} \textbf{G} (\bar{y}))~\text{for all}~i\in I(\bar{y}) \nonumber \\
\text{or,}~&\max\textbf{G}_{i_\mathscr{D}}(\bar{y};d) \preceq \textbf{G}_{\mathscr{D}}(\bar{y})(d)~\text{by Lemma \ref{compdd}, for all}~i\in I(\bar{y}).
\end{align}
To prove the converse, we assert that there exists a neighbourhood $\mathcal{N}(\bar{y})$ such that $I(y) \subset I(\bar{y})~\text{for all}~ y \in \mathcal{N}(\bar{y}). $ Assume contrarily that there exists a sequence $\{y_{k}\}$ in $\mathcal{Y}$ with $y_{k} \rightarrow \bar{y}$ such that $I(y_{k}) \not\subset I(\bar{y})$. Choose $i_{k} \in I(y_{k})$ but $i_{k} \notin I(\bar{y})$. Since $I(y_{k})$ is closed, $i_k \to \bar{i} \in I(y_{k})$. By $gH$-continuity of $\textbf{G}_i$, we have
       \begin{eqnarray*}
        \textbf{G} _{\bar{i}}(y_{k}) = \textbf{G} (y_{k}) \implies \textbf{G} _{\bar{i}}(\bar{y}) = \textbf{G}(\bar{y}),
       \end{eqnarray*}
which is a contradiction to $i_{k} \notin I(\bar{y})$. Thus, $I(y) \subset I(\bar{y})$ for all $y \in \mathcal{N}(\bar{y})$.\\
Let us consider \{$\delta _{k}$\} $\subset \mathbb{R}_{+}$, $\delta_{k} \rightarrow$ 0 and $\bar{y} + \delta _{k} d \in \mathcal{N}(\bar{y})$ for all $d \in \mathcal{Y}$. 
Then,
       \begin{align} \label{0_2}
        &  \textbf{G} _{i}(\bar{y})\preceq\textbf{G}(\bar{y}) ~\text{for all}~i \in A \nonumber \\
        \text{or,}~& \textbf{G} (\bar{y} + \delta _{k} d) \ominus _{gH} \textbf{G} (\bar{y}) \preceq  \textbf{G} (\bar{y} + \delta _{k} d) \ominus _{gH} \textbf{G} _{i}(\bar{x})~\text{by Note~\ref{noteii}, for all}~i \in A \nonumber  \\
        \text{or,}~& \textbf{G} (\bar{y} + \delta _{k} d) \ominus _{gH} \textbf{G} (\bar{x})  \preceq \textbf{G} _{i}(\bar{y} + \delta _{k} d) \ominus _{gH} \textbf{G} _{i}(\bar{y})~\text{for all}~i\in I(\bar{y}+\delta_k d) \nonumber \\
        \text{or,}~& \underset{k \rightarrow \infty}{\lim} \frac{1}{\delta_{k}} \odot (\textbf{G} (\bar{y} + \delta _{k} d) \ominus _{gH} \textbf{G} (\bar{y})) \preceq \underset{k \rightarrow \infty}{\lim} \frac{1}{\delta _{k}} \odot (\textbf{G} _{i}(\bar{y} + \delta _{k} d) \ominus _{gH} \textbf{G} _{i}(\bar{y}))~\text{for all}~i\in I(\bar{y}) \nonumber \\
        \text{or,}~& \textbf{G}_{\mathscr{D}}(\bar{y})(d) \preceq  \max~ \textbf{G} _{i_\mathscr{D}}(\bar{y};d)~\text{by Lemma \ref{compdd}, for all}~i\in I(\bar{y}).
      \end{align}
From (\ref{0_1}) and (\ref{0_2}), we obtain
\[
 \textbf{G} _{\mathscr{D}}(\bar{y})(d) = \underset{i\in I(\bar{y})}{\max} \textbf{G} _{i_\mathscr{D}}(\bar{y};d).
\]
\end{proof}

\begin{thm}{\emph{(Fermat-type efficient point condition).}}\label{fermets}
Let $\textbf{T}: \mathcal{Y} \to I(\mathbb{R})\cup \{+\boldsymbol{\infty}\}$ be a convex IVF on a nonempty convex subset $\mathcal{Y}$ of  $\mathbb{R}^n$. Then, $\bar{y}$ is a weak efficient solution of 
$
\underset{y \in \mathcal{Y}}{\inf}~\textbf{F}(y) 
$
if and only if $\widehat{\textbf{0}} \in \partial \textbf{T}(\bar{y})$.
\end{thm}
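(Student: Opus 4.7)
The plan is to reduce both directions to the basic formula for the $gH$-difference and read off what the subgradient inequality degenerates to when the subgradient is taken to be $\widehat{\mathbf{0}}$. No convexity machinery beyond what is already encoded in the definitions of $\partial\mathbf{T}$ and of weak efficiency is actually needed for this particular statement; convexity only ensures that $\partial\mathbf{T}(\bar y)$ is a meaningful object.

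First I would unpack the membership $\widehat{\mathbf{0}}\in\partial\mathbf{T}(\bar y)$. By the definition of $gH$-subgradient, this is exactly the assertion that for every $y\in\mathcal{Y}$,
\[
(y-\bar y)^\top\odot\widehat{\mathbf{0}} \;\preceq\; \mathbf{T}(y)\ominus_{gH}\mathbf{T}(\bar y).
\]
Since $(y-\bar y)^\top\odot\widehat{\mathbf{0}}=\mathbf{0}$, the condition collapses to
\[
\mathbf{0}\;\preceq\;\mathbf{T}(y)\ominus_{gH}\mathbf{T}(\bar y)\quad\text{for all } y\in\mathcal{Y}.
\]

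Next I would translate this inequality on $gH$-differences back into an inequality on the original intervals using the explicit formula
\[
\mathbf{T}(y)\ominus_{gH}\mathbf{T}(\bar y)=\bigl[\min\{\underline t(y)-\underline t(\bar y),\,\overline t(y)-\overline t(\bar y)\},\;\max\{\underline t(y)-\underline t(\bar y),\,\overline t(y)-\overline t(\bar y)\}\bigr].
\]
Saying that $\mathbf{0}=[0,0]$ is dominated by this interval means, by the definition of $\preceq$, that both the minimum and the maximum of the two differences $\underline t(y)-\underline t(\bar y)$ and $\overline t(y)-\overline t(\bar y)$ are $\geq 0$; that is, both differences are nonnegative, which is precisely $\underline t(\bar y)\leq\underline t(y)$ and $\overline t(\bar y)\leq\overline t(y)$, i.e.\ $\mathbf{T}(\bar y)\preceq\mathbf{T}(y)$.

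Thus the two conditions
\[
\mathbf{0}\preceq\mathbf{T}(y)\ominus_{gH}\mathbf{T}(\bar y)\quad\text{and}\quad \mathbf{T}(\bar y)\preceq\mathbf{T}(y)
\]
are equivalent pointwise in $y$, which gives both implications at once: $\widehat{\mathbf{0}}\in\partial\mathbf{T}(\bar y)$ holds iff $\mathbf{T}(\bar y)\preceq\mathbf{T}(y)$ for every $y\in\mathcal{Y}$, which is the definition of a weak efficient solution.

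I do not expect any real obstacle here; the only subtle point is to handle the equivalence between $\mathbf{0}\preceq\mathbf{T}(y)\ominus_{gH}\mathbf{T}(\bar y)$ and $\mathbf{T}(\bar y)\preceq\mathbf{T}(y)$ carefully, since in general $\mathbf{A}\preceq\mathbf{B}$ is \emph{not} equivalent to $\mathbf{0}\preceq\mathbf{B}\ominus_{gH}\mathbf{A}$ (the $gH$-difference symmetrizes signs via the min/max). But when one side is the degenerate interval $\mathbf{0}$, the min/max structure forces both endpoint differences to be nonnegative, which makes the equivalence hold. Alternatively, one may apply Note~\ref{notei} with $\mathbf{W}=\mathbf{T}(y)\ominus_{gH}\mathbf{T}(\bar y)$ in one direction and the elementary observation $\mathbf{T}(\bar y)\oplus(\mathbf{T}(y)\ominus_{gH}\mathbf{T}(\bar y))\succeq \mathbf{T}(\bar y)$ in the other to avoid handling the endpoints explicitly.
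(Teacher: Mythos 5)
Your proof is correct and takes essentially the same route as the paper's: both reduce $\widehat{\textbf{0}}\in\partial\textbf{T}(\bar y)$ to the condition $\textbf{0}\preceq\textbf{T}(y)\ominus_{gH}\textbf{T}(\bar y)$ for all $y$ and identify this with weak efficiency, except that the paper asserts this last identification without comment while you verify it endpoint-by-endpoint. One small remark: your cautionary aside is unnecessary, since $\textbf{A}\preceq\textbf{B}$ and $\textbf{0}\preceq\textbf{B}\ominus_{gH}\textbf{A}$ are in fact always equivalent (the min/max computation you perform works for arbitrary $\textbf{A},\textbf{B}$; what fails in general is the analogous equivalence with a non-degenerate interval in place of $\textbf{0}$).
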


\begin{proof}
If $\bar{y}$ is a weak efficient solution, then for all $y \in \mathcal{Y}$
\begin{eqnarray*}
 &&\textbf{0} \preceq \textbf{T}(y)\ominus_{gH}\textbf{T}(\bar{y})\\
&\iff&(y-\bar{y})^\top \odot \widehat{\textbf{0}}\preceq \textbf{T}(y)\ominus_{gH}\textbf{T}(\bar{y})\\
&\iff&\widehat{\textbf{0}} \in \partial \textbf{T}(\bar{y}).
\end{eqnarray*}

\end{proof}

\begin{lem} \label{supkk}
Consider the IVF $\textbf{T}: \mathcal{Y} \to I(\mathbb{R})$ and $g_1,g_2,\ldots,g_p : \mathcal{Y} \to \mathbb{R}$. Let for the IOP
\begin{eqnarray}\label{a_1}
&\underset{y \in \mathcal{Y}}{\inf}&\textbf{T}(y)\\
&\text{subject to }&g_j(y) \leq 0,~j=1,2,\ldots,p,\nonumber
\end{eqnarray}
 the infimum value, denoted by $\textbf{T}_{\inf}$, is finite. Consider another IOP which is defined by
 \begin{eqnarray}\label{z1}
\underset{y \in \mathcal{Y}}{\inf}~\textbf{T}^{*}(y)
\end{eqnarray}
where $\textbf{T}^{*}: \mathcal{Y} \to I(\mathbb{R})$ and $\textbf{T}^*(y) = \sup\{\textbf{T}(y)\ominus_{gH} \textbf{T}_{\inf}, g_1(y),g_2(y),\ldots,g_p(y)\}.$ 
Then, the set of weak efficient points of (\ref{a_1}) is same as that of (\ref{z1}).
\end{lem}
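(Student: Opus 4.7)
The plan is to establish the equivalence by first deriving a clean pointwise description of $\textbf{T}^*(y)$ that separates feasible from infeasible $y$, and then using this description to match weak efficient points of the two problems. The pointwise claim I would prove is that for feasible $y$ (every $g_j(y)\leq 0$) one has $\textbf{T}^*(y)=\textbf{T}(y)\ominus_{gH}\textbf{T}_{\inf}$ with $\textbf{0}\preceq \textbf{T}^*(y)$, while for infeasible $y$ (some $g_{j_0}(y)>0$) one has $\textbf{0}\prec \textbf{T}^*(y)$. For feasibility, since $\textbf{T}_{\inf}\preceq \textbf{T}(y)$ by definition of the infimum, the explicit endpoint formula for $\ominus_{gH}$ gives $\textbf{0}\preceq \textbf{T}(y)\ominus_{gH}\textbf{T}_{\inf}$; viewing each nonpositive scalar $g_j(y)$ as the degenerate interval $[g_j(y),g_j(y)]\preceq \textbf{0}$, the set inside the supremum becomes a set of comparable intervals whose maximum is $\textbf{T}(y)\ominus_{gH}\textbf{T}_{\inf}$, and by the remark after Definition~\ref{max} this coincides with its supremum. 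For infeasibility, the endpoint formula for $\sup$ forces both endpoints of $\textbf{T}^*(y)$ to be at least $g_{j_0}(y)>0$.

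The forward implication then falls out directly: if $\bar{y}$ is weak efficient for (\ref{a_1}) then $\bar{y}$ is feasible and $\textbf{T}(\bar{y})\preceq \textbf{T}(y)$ for every feasible $y$, which forces $\textbf{T}(\bar{y})=\textbf{T}_{\inf}$; the pointwise claim then gives $\textbf{T}^*(\bar{y})=\textbf{0}$ and $\textbf{T}^*(y)\succeq \textbf{0}$ for every $y\in\mathcal{Y}$, so $\textbf{T}^*(\bar{y})\preceq \textbf{T}^*(y)$ for every $y$ and $\bar{y}$ is weak efficient for (\ref{z1}). For the reverse implication, let $\bar{y}$ be weak efficient for (\ref{z1}) and let $\bar{y}_0$ be any weak efficient point of (\ref{a_1}), so that $\textbf{T}^*(\bar{y}_0)=\textbf{0}$; weak efficiency of $\bar{y}$ yields $\textbf{T}^*(\bar{y})\preceq \textbf{T}^*(\bar{y}_0)=\textbf{0}$, which is incompatible with $\bar{y}$ being infeasible because the pointwise claim would then give $\textbf{T}^*(\bar{y})\succ \textbf{0}$. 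Hence $\bar{y}$ is feasible, $\textbf{T}^*(\bar{y})=\textbf{T}(\bar{y})\ominus_{gH}\textbf{T}_{\inf}$, and sandwiching between $\textbf{0}\preceq \textbf{T}^*(\bar{y})\preceq \textbf{0}$ gives $\textbf{T}(\bar{y})=\textbf{T}_{\inf}$, making $\bar{y}$ weak efficient for (\ref{a_1}).

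The main obstacle will be the pointwise description of $\textbf{T}^*$, because the set inside the supremum mixes an interval-valued term with scalar terms whose signs depend on feasibility, so one must verify comparability and apply the explicit endpoint formula for the supremum carefully to conclude $\textbf{0}\preceq \textbf{T}^*(y)$ in general and the strict inequality $\textbf{0}\prec \textbf{T}^*(y)$ in the infeasible case. A secondary subtlety is the implicit assumption that (\ref{a_1}) possesses a weak efficient point, so that $\textbf{T}_{\inf}$ is attained and provides the reference point $\bar{y}_0$ used in the reverse direction.
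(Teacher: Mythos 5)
Your proposal is correct and takes essentially the same approach as the paper's proof, which likewise reduces the claim to showing that $\textbf{T}^{*}(y)=\textbf{0}$ exactly on the weak efficient set of (\ref{a_1}) and that $\textbf{0}\prec\textbf{T}^{*}(y)$ elsewhere; you merely supply the endpoint computations for the supremum and the final equivalence argument that the paper leaves implicit. Your closing remark that $\textbf{T}_{\inf}$ must be attained is a genuine unstated hypothesis on which the paper's argument also relies.
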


\begin{proof}
Let $Y'$ be the set of weak efficient points of (\ref{a_1}). Then, to prove the required relation, we show that
\begin{enumerate}[(i)]
    \item \label{a_11} $ \textbf{T}^{*}(y)=\textbf{0}$ for any $y \in Y'$ or
    \item \label{a_22} $ \textbf{0}\prec \textbf{T}^*(y) $ for any $y \not\in Y'$.
\end{enumerate}
 If $y \in Y'$, then $g_j(y) \leq 0$ for all $i=1,2,\ldots,p$ and $\textbf{T}(y)=\textbf{T}_{\inf}$. This implies that $\textbf{T}^{*}(y)=0$.\\
If $y \not\in Y'$, then $y$ is either `not feasible' or `feasible but not weak efficient'.\\ 
Let $y$ be not feasible. Then, for some $j$, $ g_j(y) > 0$, which implies $\textbf{0}\prec \textbf{T}^*(x)$.\\
Let $y$ be feasible but not weak efficient. Then, $\textbf{T}_{\inf} \prec\textbf{T}(y)$, which implies that $\textbf{0}\prec \textbf{T}^*(y)$.
\end{proof}

\begin{thm}\label{fri}{\emph{(Fritz-John-type necessary condition for IOPs).}}
Let $\mathcal{Y}$ be a nonempty convex subset of $\mathbb{R}^n$. Consider the constrained IOP
\begin{eqnarray} \label{a_33}
&\underset{y \in \mathcal{Y}}{\inf}&\textbf{T}(y)\\
&\text{subject to }& g_j(y) \leq 0,~j=1,2,\ldots,p,\nonumber
\end{eqnarray}
where $\textbf{T}: \mathcal{Y} \to I(\mathbb{R})$ be a convex IVF and $g_1,g_2,\ldots,g_p : \mathcal{Y} \to \mathbb{R}$ be real-valued convex functions. If  $\bar{y}$ is a weak efficient solution of (\ref{a_33}),  
then there exist $\delta_j \geq 0,~j=0,1,2,\ldots,p$, not all zeros, such that 
\begin{eqnarray}\label{an}
&&\widehat{\textbf{0}} \in \delta_0\odot \partial \textbf{T}(\bar{y}) \oplus \sum_{j=1}^{p} \delta_j \partial g_j(\bar{y})\\
&\text{and}&\delta_j g_j(\bar{y})=0,~j=1,2,\ldots,p.\nonumber
\end{eqnarray}
\end{thm}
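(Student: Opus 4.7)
The plan is to reduce the constrained IOP to an unconstrained one via Lemma \ref{supkk}, apply the Fermat-type condition (Theorem \ref{fermets}) to recover the optimality condition, and finally decompose the $gH$-subdifferential of the auxiliary maximum function using Theorem \ref{dd_max}. Throughout, I view each real-valued $g_j$ as the degenerate IVF $\textbf{T}_j(y)=[g_j(y),g_j(y)]$ so that the interval-valued machinery applies.

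First, set $\textbf{T}_0(y) := \textbf{T}(y)\ominus_{gH}\textbf{T}_{\inf}$ and $\textbf{T}_j(y) := g_j(y)$ for $j=1,\ldots,p$, and let $\textbf{T}^*(y)=\sup\{\textbf{T}_0(y),\textbf{T}_1(y),\ldots,\textbf{T}_p(y)\}$. Each $\textbf{T}_j$ is convex and $gH$-continuous with a well-defined $gH$-directional derivative, and the finite collection $\{\textbf{T}_j(y):j=0,1,\ldots,p\}$ is comparable (as in the hypothesis of Theorem \ref{dd_max}). By Lemma \ref{supkk}, $\bar{y}$ is also a weak efficient point of $\inf_{y\in\mathcal{Y}}\textbf{T}^*(y)$, so Theorem \ref{fermets} delivers $\widehat{\textbf{0}}\in\partial\textbf{T}^*(\bar{y})$.

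Second, I identify the active index set $I(\bar{y})=\{j\in\{0,1,\ldots,p\}:\textbf{T}_j(\bar{y})=\textbf{T}^*(\bar{y})\}$. By the proof of Lemma \ref{supkk}, $\textbf{T}^*(\bar{y})=\textbf{0}$ and $\textbf{T}_0(\bar{y})=\textbf{0}$, so $0\in I(\bar{y})$, while for $j\geq 1$ we have $j\in I(\bar{y})\iff g_j(\bar{y})=0$. For every inactive index $j\notin I(\bar{y})$ (i.e.\ $g_j(\bar{y})<0$), I simply set $\delta_j:=0$; this automatically secures the complementary-slackness condition $\delta_j g_j(\bar{y})=0$ for all $j=1,\ldots,p$.

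Third, by Theorem \ref{dd_max}, for every direction $d$,
\[
\textbf{T}^*_{\mathscr{D}}(\bar{y})(d)=\max_{j\in I(\bar{y})}\textbf{T}_{j_{\mathscr{D}}}(\bar{y};d).
\]
Passing from this sublinear support identity to subdifferentials, I would argue that
\[
\partial\textbf{T}^*(\bar{y})=\mathrm{co}\!\left(\bigcup_{j\in I(\bar{y})}\partial\textbf{T}_j(\bar{y})\right),
\]
invoking the closedness and boundedness results (Theorems \ref{closedunion}, \ref{finitebounded}, \ref{closedhull}, \ref{boundedhull}) to ensure the right-hand side is a well-defined set of intervals. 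Membership of $\widehat{\textbf{0}}$ in this set then produces multipliers $\{\delta_j\}_{j\in I(\bar{y})}\subset\mathbb{R}_+$, not all zero, with $\widehat{\textbf{0}}\in\bigoplus_{j\in I(\bar{y})}\delta_j\odot\partial\textbf{T}_j(\bar{y})$. Since $\partial\textbf{T}_0(\bar{y})=\partial\textbf{T}(\bar{y})$ (the shift by the constant interval $\textbf{T}_{\inf}$ does not affect the $gH$-subdifferential) and $\partial\textbf{T}_j(\bar{y})=\partial g_j(\bar{y})$ for $j\geq 1$, setting $\delta_j=0$ for the inactive indices gives inclusion (\ref{an}).

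The main obstacle is justifying the convex-hull representation of $\partial\textbf{T}^*(\bar{y})$ for the pointwise maximum of convex IVFs. In the scalar case this comes for free from the duality between sublinear functions and their supporting sets, but in the $gH$-setting one has to combine Theorem \ref{dd_max} with a support-function description of $gH$-subgradients and verify that the resulting convex hull is both closed and bounded so that selection of multipliers is legitimate; the topological lemmas collected earlier in Section \ref{section3} are tailored precisely for this step.
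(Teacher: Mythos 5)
Your overall strategy coincides with the paper's: reduce the constrained problem to the unconstrained minimization of $\textbf{T}^*$ via Lemma \ref{supkk}, apply the Fermat-type condition (Theorem \ref{fermets}) to get $\widehat{\textbf{0}}\in\partial\textbf{T}^*(\bar{y})$, and then decompose $\partial\textbf{T}^*(\bar{y})$ by a max rule over the active indices, discarding the inactive ones to get complementary slackness. The difference lies in how the decisive third step is closed. The paper invokes Theorem 3.50 of \cite{amir} to write $\partial \textbf{T}^{*}(\bar{y})=\delta_0 \odot \partial (\textbf{T}(\bar{y})\ominus_{gH} \textbf{T}_{\inf})\oplus \sum_{j \in I(\bar{y})} \delta_j \partial g_j(\bar{y})$ with $\sum\delta_j=1$ (which also makes ``not all zero'' automatic), whereas you try to derive the convex-hull formula $\partial\textbf{T}^*(\bar{y})=\mathrm{co}\bigl(\bigcup_{j\in I(\bar y)}\partial\textbf{T}_j(\bar y)\bigr)$ from Theorem \ref{dd_max} together with the closedness and boundedness lemmas --- and you explicitly concede that you cannot justify this identity. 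That concession is a genuine gap: the max rule for $gH$-subdifferentials is precisely the mathematical content of the theorem, and neither Theorem \ref{dd_max} (which only gives the directional derivative of the maximum) nor the topological lemmas (which only certify that your candidate set is closed and bounded) supply the duality between the sublinear map $d\mapsto\textbf{T}^*_{\mathscr{D}}(\bar y)(d)$ and the set $\partial\textbf{T}^*(\bar y)$ in the interval setting. Without that support-function duality, membership of $\widehat{\textbf{0}}$ in $\partial\textbf{T}^*(\bar y)$ does not produce the multipliers.

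A second, more concrete obstruction: to apply Theorem \ref{dd_max} you assert that $\{\textbf{T}(y)\ominus_{gH}\textbf{T}_{\inf},\,g_1(y),\ldots,g_p(y)\}$ is a set of comparable intervals. This is false in general --- a nondegenerate interval $[a,b]$ with $a<b$ and a degenerate interval $[c,c]$ are comparable only when $c\le a$ or $c\ge b$ --- so the $\sup$ in Lemma \ref{supkk} need not be a $\max$ in the sense of Definition \ref{max}, and Theorem \ref{dd_max} cannot be invoked as stated. (To be fair, the paper's own appeal to Theorem 3.50 of \cite{amir}, a result proved for real-valued convex functions, carries an analogous unjustified transfer to the interval-valued setting; but since your route channels everything through Theorem \ref{dd_max}, the comparability hypothesis becomes an explicit, checkable failure point in your argument.)
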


\begin{proof}
Let $\bar{y}$ be a weak efficient solution of (\ref{a_33}). Denote $\textbf{T}(\bar{y})=\textbf{T}_{\inf}$. Then, by Lemma \ref{supkk}, $\bar{y}$ is a weak efficient solution of
$
\underset{y \in \mathcal{Y}}{\inf}~\textbf{T}^{*}(y),
$
\begin{eqnarray*} 
\text{where}~\textbf{T}^{*}(y)= \sup \{\textbf{T}(y)\ominus_{gH} \textbf{T}_{\inf}, g_1(y),g_2(y),\ldots,g_p(y)\}.
\end{eqnarray*}
Since $\textbf{T}^{*}(\bar{y})=\textbf{0}$, by Theorem \ref{fermets}, 
\begin{eqnarray}\label{a44}
\widehat{\textbf{0}} \in \partial\textbf{T}^{*}(\bar{y}).
\end{eqnarray}
From Theorem 3.50 of \cite{amir}, we have
\begin{eqnarray}\label{a55}
\partial \textbf{T}^{*}(\bar{y})=\delta_0 \odot \partial (\textbf{T}(\bar{y})\ominus_{gH} \textbf{T}_{\inf})\oplus \underset{j \in I(\bar{y})}{\sum} \delta_j \partial g_j(\bar{y}),
\end{eqnarray}
where $\delta_j \geq 0$ such that $\underset{j \in I(\bar{y})}{\sum}\delta_j=1$ and $I(\bar{y})=\{j \in \{1,2,\ldots,p\}: g_j(\bar{y})=0\}$.\\
In view of (\ref{a44}) and (\ref{a55}), we obtain
\[\widehat{\textbf{0}} \in \delta_0 \odot \partial(\textbf{T}(\bar{y})\ominus_{gH} \textbf{T}_{\inf})\oplus \underset{j \in I(\bar{y})}{\sum} \delta_j \partial g_j(\bar{y}).
\]
Let $\textbf{H}(\bar{y})=\textbf{T}(\bar{y})\ominus_{gH} \textbf{T}_{\inf}.$ Then, $\partial(\textbf{H})(\bar{y})=\partial \textbf{T}(\bar{y})$. Now, by taking $\delta_j=0$ for $j \not\in I(\bar{y})$, we get the desired result.
\end{proof}

\begin{note}\label{frit}
If $\textbf{T}(\bar{y})$ and $\textbf{T}(y)$ are not comparable, where $\bar{y}$ is an efficient solution of IOP (\ref{a_33}), then (\ref{an}) is not true at $\bar{y}$. Consider the IOP
\begin{align}\label{iop}
\inf&~\textbf{T}(y) = [1,2]\odot y^2 \oplus [0,2]\odot y \oplus [2,5]\\
\text{subject to }&~g(y)= y-1 \leq 0,~y \in \mathcal{Y}=[-2,0]. \nonumber
\end{align} Note that $\textbf{T}$ (refer to Figure \ref{fig1}) and $g$ are convex on $\mathcal{Y}$. Also, \textbf{T} is $gH$-differentiable on $\mathcal{Y}$ as $\underline{t}(y)=y^2+2y+2$ and $\overline{t}(y)=2y^2+5$ are differentiable on $\mathcal{Y}$. Thus, from Theorem \ref{diffder},
\[
\partial \textbf{T}(y)=\{\nabla\textbf{T}(y)\}=\{ [2,4]\odot y \oplus [0,2]\}~\text{and }\partial g(y)=\{\nabla g(y)\}=\{1\}~\text{for all}~y \in \mathcal{Y}.
\]

\begin{figure}[H]
	\begin{center}
\begin{tikzpicture}[scale=0.5]
\begin{axis}[
    samples=30,
    domain=-2:0,
    axis lines=middle,
    y=3cm/5,
    x=7cm,
    xtick={-2, -1.5, ...,0},
    minor xtick = {-1.9, -1.8, ..., 0}, 
    ytick={0, 4,...,12},
    minor ytick = {1, 3, ..., 13},
    enlargelimits = true, 
    xlabel=$y$,
    ylabel=$T$, no markers]
    
    
    \addplot+[name path=A,black] {x^2 + 2*x + 2}; 
    \addplot+[name path=B,black] {2*x^2 + 5};
    \draw (60, 12) node {$\underline{t}$};
    \draw (100, 70) node {$\overline{t}$};
    \addplot[mylightblue] fill between[of=A and B];
    \draw[color=red, ultra thick]  (100, -10) -- (200, -10); 
\end{axis}
 
\end{tikzpicture}
\caption{The IVF $\textbf{T}$ of Note  \ref{frit}}\label{fig1}

\end{center}
\end{figure}

All $y$'s in the red line segment in Figure \ref{fig1} are efficient solutions of \eqref{iop}. Thus, $\bar{y}=0 \in [-1, 0]$ is an efficient solution of the IOP (\ref{iop}). However, for all $\delta_0, \delta_1\geq 0$, not all zeros,
\begin{eqnarray*}
\delta_{0}\odot\partial\textbf{T}(\bar{y})\oplus \delta_1 \partial g(\bar{y}) =\delta_{0}\odot[0, 2]\oplus \delta_1 \odot [1,1] \neq \textbf{0}.
\end{eqnarray*}
Hence, $\textbf{0}\not\in \delta_{0}\odot\partial\textbf{T}(\bar{y})\oplus \delta_1 \partial g(\bar{y}).$
\end{note}

\begin{note}\label{luoptimal}
It is to observe that the IOP in Note \ref{frit} also violets the KKT theory for IOPs in \cite{jayswal}. Since
\[\underline{\delta_{0}}\partial\underline{t}(\bar{y})+\overline{\delta_{0}}\partial\overline{t}(\bar{y})+ \delta_1 \partial g(\bar{y})=2\underline{\delta_{0}}+\delta_1 \neq 0, \]
where $\underline{\delta_{0}}>0,\delta_1 \geq 0$, therefore $0 \not\in \underline{\delta_{0}}\partial\underline{t}(\bar{y})+\overline{\delta_{0}}\partial\overline{t}(\bar{y})+ \sum_{j=1}^{m} \delta_j \partial g_j(\bar{y})$.
\end{note}

We will now establish a KKT-type condition. The necessity of the conditions requisite an extra condition, which refer as \emph{Slater's condition}:
\begin{align}\label{crt}
\text{there exists }\bar{y}\in \mathcal{Y} \text{ such that } g_i(\bar{y})<0 ~\text{ for all  }~i=1,2,\ldots,p.   
\end{align}

\begin{thm}{\emph{(KKT-type necessary condition for IOPs).}}\label{kkeff} Let $\mathcal{Y}$ be a nonempty convex subset of $\mathbb{R}^n$. 
Consider the constrained IOP 
\begin{eqnarray}\label{p3}
&\underset{y \in \mathcal{Y}}{\inf}& \textbf{T}(y)\\
&\text{subject to }&g_j(y) \leq 0,~j=1,2,\ldots,p,\nonumber
\end{eqnarray}
where $\textbf{T}: \mathcal{Y} \to I(\mathbb{R})$ be a convex IVF and $g_1,g_2,\ldots,g_p : \mathcal{Y} \to \mathbb{R}$ be real-valued convex functions. Assume that the Slater's condition (\ref{crt}) is satisfied. Then, $\bar{y}$ is a weak efficient solution of (\ref{p3}) if and only if there exist $\delta_j \geq 0,~ j=1,2,\ldots,p$, such that 
\begin{eqnarray}
&&\widehat{\textbf{0}} \in \partial \textbf{T}(\bar{y}) \oplus \sum_{j=1}^{p} \delta_j \partial g_j(\bar{y})\label{p1}\\
&\text{and}&\delta_j g_j(\bar{y})=0,~j=1,2,\ldots,p.\label{p2}
\end{eqnarray} 
\end{thm}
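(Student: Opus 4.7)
The plan is to prove the two directions separately, using Theorem~\ref{fri} (Fritz-John) for necessity and direct subgradient manipulations for sufficiency.

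For the necessity part, I would first invoke Theorem~\ref{fri} to obtain multipliers $\delta_0, \delta_1, \ldots, \delta_p \geq 0$, not all zero, satisfying $\widehat{\textbf{0}} \in \delta_0 \odot \partial\textbf{T}(\bar{y}) \oplus \sum_{j=1}^p \delta_j \partial g_j(\bar{y})$ together with complementary slackness $\delta_j g_j(\bar{y}) = 0$. The core task is then to upgrade this to a KKT statement by showing $\delta_0 > 0$, after which dividing each $\delta_j$ by $\delta_0$ gives \eqref{p1}-\eqref{p2}. To force $\delta_0 > 0$, I would argue by contradiction: if $\delta_0 = 0$, then some $\delta_{j_0} > 0$ for $j_0 \geq 1$, and there exist $u_j \in \partial g_j(\bar{y})$ with $\sum_{j=1}^p \delta_j u_j = 0$ in $\mathbb{R}^n$. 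Let $y^{*} \in \mathcal{Y}$ be a Slater point, so $g_j(y^{*}) < 0$ for every $j$. The real-valued convex subgradient inequality gives $(y^{*} - \bar{y})^\top u_j \leq g_j(y^{*}) - g_j(\bar{y})$; multiplying by $\delta_j$, summing, and using complementary slackness yields
\begin{equation*}
0 \;=\; (y^{*} - \bar{y})^\top \sum_{j=1}^p \delta_j u_j \;\leq\; \sum_{j=1}^p \delta_j g_j(y^{*}) \;<\; 0,
\end{equation*}
a contradiction. Hence $\delta_0 > 0$ and the normalization completes the necessity part.

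For the sufficiency part, I would start from \eqref{p1} by picking $\widehat{\textbf{G}} \in \partial\textbf{T}(\bar{y})$ and $u_j \in \partial g_j(\bar{y})$ with $\widehat{\textbf{G}} \oplus \sum_{j=1}^p \delta_j u_j = \widehat{\textbf{0}}$, so that $\widehat{\textbf{G}} = -\sum_{j=1}^p \delta_j u_j \in \mathbb{R}^n$ (a degenerate interval vector). For an arbitrary feasible $y$, the $gH$-subgradient inequality for $\textbf{T}$ gives $(y-\bar{y})^\top \odot \widehat{\textbf{G}} \preceq \textbf{T}(y) \ominus_{gH} \textbf{T}(\bar{y})$. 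The real-valued subgradient inequality for each $g_j$ together with $\delta_j \geq 0$, $\delta_j g_j(\bar{y}) = 0$, and $g_j(y) \leq 0$ yields
\begin{equation*}
\sum_{j=1}^p \delta_j (y-\bar{y})^\top u_j \;\leq\; \sum_{j=1}^p \delta_j \bigl(g_j(y) - g_j(\bar{y})\bigr) \;=\; \sum_{j=1}^p \delta_j g_j(y) \;\leq\; 0,
\end{equation*}
so $(y-\bar{y})^\top \widehat{\textbf{G}} = -\sum_{j=1}^p \delta_j (y-\bar{y})^\top u_j \geq 0$. Writing this nonnegative real number as a degenerate interval and unpacking the definition of $\ominus_{gH}$ gives $\underline{t}(\bar{y}) \leq \underline{t}(y)$ and $\overline{t}(\bar{y}) \leq \overline{t}(y)$, i.e., $\textbf{T}(\bar{y}) \preceq \textbf{T}(y)$, establishing weak efficiency.

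The main obstacle I anticipate is the necessity direction, specifically the $\delta_0 > 0$ step: the Fritz-John result lives in $I(\mathbb{R})^n$ while Slater's argument must be executed in $\mathbb{R}^n$, so care is needed to justify that the real-valued component obtained by reading the inclusion coordinate-wise in $\mathbb{R}^n$ is exactly what the subgradient inequalities for $g_j$ require. The sufficiency direction is then essentially algebraic, provided one is careful with the $\preceq$-behavior of $\ominus_{gH}$ when the left-hand side degenerates to a real number.
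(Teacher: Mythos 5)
Your proof is correct, and it diverges from the paper's in a way worth noting. For necessity you follow the same skeleton as the paper: apply the Fritz-John Theorem~\ref{fri}, then use the Slater point to rule out $\delta_0=0$. The paper outsources that last step to Theorem~3.78 of Beck's book (and states the resulting inequality somewhat opaquely), whereas you carry it out explicitly via the real subgradient inequality $(y^*-\bar y)^\top u_j\le g_j(y^*)-g_j(\bar y)$, complementary slackness, and $\sum_j\delta_j g_j(y^*)<0$; this is self-contained and arguably cleaner, and your observation that the inclusion with $\delta_0=0$ collapses to a statement purely in $\mathbb{R}^n$ (since the $g_j$ are real-valued) is exactly the point that needs care. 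For sufficiency your route is genuinely different: the paper defines the Lagrangian-type IVF $\textbf{H}(y)=\textbf{T}(y)\oplus\sum_j\delta_j g_j(y)$, asserts the sum rule $\partial\textbf{H}(\bar y)=\partial\textbf{T}(\bar y)\oplus\sum_j\delta_j\partial g_j(\bar y)$, invokes the Fermat-type Theorem~\ref{fermets} to get weak efficiency of $\bar y$ for $\textbf{H}$, and then sandwiches $\textbf{T}(\bar y)=\textbf{H}(\bar y)\preceq\textbf{H}(\hat y)\preceq\textbf{T}(\hat y)$. You instead unpack the inclusion \eqref{p1} directly: since the $u_j$ are real vectors, the chosen $\widehat{\textbf{G}}\in\partial\textbf{T}(\bar y)$ is forced to be the degenerate vector $-\sum_j\delta_j u_j$, and the two subgradient inequalities combine to give $\textbf{0}\preceq\textbf{T}(y)\ominus_{gH}\textbf{T}(\bar y)$, hence $\textbf{T}(\bar y)\preceq\textbf{T}(y)$. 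Your version buys you independence from the $gH$-subdifferential sum rule (which the paper uses without proof, though only the easy inclusion is actually needed there), at the cost of being slightly more computational; both arguments are valid.
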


\begin{proof}
Let $\bar{y}$ be a weak efficient solution of (\ref{p3}). Then, from Theorem \ref{fri}, there exist $\tilde{\delta}_{i} \geq 0,$ not all zeros, such that  
\begin{eqnarray}
&&\widehat{\textbf{0}} \in \tilde{\delta}_{0} \odot \partial \textbf{T}(\bar{y}) \oplus \sum_{j=1}^{p} \tilde{\delta}_j \partial g_j(\bar{y})\text{ and }\label{a1}\\
&&\tilde{\delta}_{j} g_j(\bar{y})=0,~j=1,2,\ldots,p.\label{a3}
\end{eqnarray}
If $\tilde{\delta}_{0}\not= 0$, then the result is true by taking $\delta_j=\frac{\tilde{\delta}_{j}}{\delta_0},~j=1,2,\ldots,p$ in (\ref{a1}). Let us assume contrarily that $\tilde{\delta}_{0}= 0$. Then,
\[
\widehat{\textbf{0}} \in  \sum_{j=1}^{p} \tilde{\delta}_j \partial g_j(\bar{y}) \implies \widehat{0} \in  \sum_{j=1}^{p} \tilde{\delta}_j \partial g_j(\bar{y}).
\]
From Theorem 3.78 of \cite{amir}, for a point $x^*$ which satisfies the Slater's condition (\ref{crt}), we have $ \sum_{j=1}^{p} \tilde{\delta}_{j}g_{j}({x}^{*}) \geq 0,$
which controverts the presumption that $\tilde{\delta}_{j} \geq 0$ and $g_j(x^{*})<0$ for some $j$ and all $\tilde{\delta}_{j}$'s are not zero. Thus, $\tilde{\delta}_{0}>0.$\\ 
To prove the converse part, assume that $\bar{y}$ satisfies (\ref{p1}) and (\ref{p2}) for some $\delta_j\geq 0,~j=1,2,\ldots,p.$\\
Define a convex IVF $\textbf{H}$ by $\textbf{H}(y)=\textbf{T}(y)\oplus \sum_{j=1}^{p}\delta_j g_j(y).$ 
Then,
\[\partial\textbf{H}(\bar{y})=\partial\textbf{T}(\bar{y})\oplus \sum_{j=1}^{p}\delta_j \partial g_j(\bar{y}).\]
Since $\widehat{\textbf{0}}\in \partial \textbf{H}(\bar{y})$, $\bar{y}$ is a weak efficient point of $\textbf{H}$. Therefore, by (\ref{a3}) and Theorem \ref{fermets}, 
\begin{equation}\label{uu}
  \textbf{T}(\bar{y})=\textbf{T}(\bar{y})\oplus \sum_{j=1}^{p} \delta_j g_j(\bar{y})=\textbf{H}(\bar{y}).  
\end{equation}
Let $\hat{y}$ be a feasible point of (\ref{p3}). Then, 
\begin{equation}\label{uu1}
\textbf{H}(\hat{y})= \textbf{T}(\hat{y}) \oplus \sum_{j=1}^{p} \delta_j g_j(\hat{y}) \preceq \textbf{T}(\hat{y}).
\end{equation}
From \eqref{uu} and \eqref{uu1}, we obtain $ \textbf{T}(\bar{y}) \preceq \textbf{T}(\hat{y})$, and therefore $\bar{y}$ is a weak efficient solution of (\ref{p3}).
\end{proof}




\section{Application of $gH$-subdifferentials in nonconvex composite optimization models} \label{section4}
In this section, we derive a necessary efficiency condition for nonconvex composite IOPs and a sufficient condition of convex IOPs with the help of $gH$-subdifferentials.

\begin{thm}{\emph{(Efficiency conditions for the composite model).}} \label{non1}
Let $\mathcal{Y}$ be a nonempty convex subset of $\mathbb{R}^n$. Let $\textbf{T} : \mathcal{Y} \rightarrow I(\mathbb{R})\cup \{\boldsymbol{+\infty}\}$ be a proper IVF and let $\textbf{H} : \mathcal{Y} \rightarrow I(\mathbb{R})\cup \{\boldsymbol{+\infty}\}$ be a proper convex IVF such that dom($\textbf{H}$) is a subset of the interior of dom($\textbf{T}$).  
Consider the IOP $(\textbf{P})$:
\begin{eqnarray} \label{0_7}
\underset{y \in \mathcal{Y}}{\inf}~\textbf{T}(y) \oplus \textbf{H}(y).
\end{eqnarray}
If $\bar{y} \in dom(\textbf{H})$ is a weak efficient solution of (\ref{0_7}) 
and $\textbf{T}$ is $gH$-differentiable at $\bar{y}$, then 
    \begin{eqnarray}\label{0_8}
   (-1) \odot\nabla\textbf{T}(\bar{y}) \in \partial \textbf{H}(\bar{y}).
    \end{eqnarray}
The converse is true if $\textbf{T}$ is convex on $\mathcal{Y}$.
\end{thm}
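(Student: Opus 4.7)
My plan is to prove necessity via a perturbation argument along the segment $y_\delta := \bar{y}+\delta(y-\bar{y})$ combined with Lemma \ref{00}(\ref{3_3}), and sufficiency by descending to endpoint functions using Lemma \ref{lc1}. The forward direction is a clean chain of inequalities, while the converse requires careful endpoint analysis because $\widehat{\textbf{S}}\oplus((-1)\odot\widehat{\textbf{S}})$ is \emph{not} $\widehat{\textbf{0}}$ for non-degenerate intervals.

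For necessity, I would fix $y\in\operatorname{dom}(\textbf{H})$ and choose $\delta>0$ small enough that $y_\delta\in\operatorname{dom}(\textbf{H})\subseteq\operatorname{int}(\operatorname{dom}(\textbf{T}))$. Weak efficiency yields $\textbf{T}(\bar{y})\oplus\textbf{H}(\bar{y})\preceq\textbf{T}(y_\delta)\oplus\textbf{H}(y_\delta)$; convexity of $\textbf{H}$ together with Note \ref{noteii} then upgrades the right-hand side to produce
\[
\textbf{T}(\bar{y})\oplus\textbf{H}(\bar{y})\preceq\textbf{T}(y_\delta)\oplus(1-\delta)\odot\textbf{H}(\bar{y})\oplus\delta\odot\textbf{H}(y).
\]
Next I would apply Lemma \ref{00}(\ref{3_3}) with $\textbf{X}=\textbf{T}(\bar{y})$, $\textbf{Y}=\textbf{H}(\bar{y})$, $\textbf{Z}=\textbf{T}(y_\delta)$, $\textbf{W}=(1-\delta)\odot\textbf{H}(\bar{y})\oplus\delta\odot\textbf{H}(y)$, and observe by direct endpoint computation that $\textbf{W}\ominus_{gH}\textbf{H}(\bar{y})=\delta\odot(\textbf{H}(y)\ominus_{gH}\textbf{H}(\bar{y}))$, giving
\[
\textbf{T}(\bar{y})\ominus_{gH}\textbf{T}(y_\delta)\preceq\delta\odot\bigl(\textbf{H}(y)\ominus_{gH}\textbf{H}(\bar{y})\bigr).
\]
Dividing by $\delta$, rewriting the left side using $\textbf{A}\ominus_{gH}\textbf{B}=(-1)\odot(\textbf{B}\ominus_{gH}\textbf{A})$, and letting $\delta\to 0^+$ via Lemma \ref{compdd} and the $gH$-differentiability of $\textbf{T}$ produces $(y-\bar{y})^\top\odot\bigl((-1)\odot\nabla\textbf{T}(\bar{y})\bigr)\preceq\textbf{H}(y)\ominus_{gH}\textbf{H}(\bar{y})$, which is the defining $gH$-subgradient inequality.

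For sufficiency, convexity of $\textbf{T}$ together with Theorem \ref{diffder} delivers $\nabla\textbf{T}(\bar{y})\in\partial\textbf{T}(\bar{y})$, so I have the two subgradient inequalities $\widehat{\textbf{S}}\preceq\textbf{T}(y)\ominus_{gH}\textbf{T}(\bar{y})$ and $(-1)\odot\widehat{\textbf{S}}\preceq\textbf{H}(y)\ominus_{gH}\textbf{H}(\bar{y})$, where $\widehat{\textbf{S}}:=(y-\bar{y})^\top\odot\nabla\textbf{T}(\bar{y})$. The goal is $\textbf{T}(\bar{y})\oplus\textbf{H}(\bar{y})\preceq\textbf{T}(y)\oplus\textbf{H}(y)$, which by Lemma \ref{lc1} amounts to the two scalar inequalities $\underline{t}(\bar{y})+\underline{h}(\bar{y})\leq\underline{t}(y)+\underline{h}(y)$ and $\overline{t}(\bar{y})+\overline{h}(\bar{y})\leq\overline{t}(y)+\overline{h}(y)$. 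I would expand each interval subgradient inequality using the $\min/\max$ formula for $\ominus_{gH}$ into scalar inequalities on the endpoints of $\widehat{\textbf{S}}$, split into cases based on the sign of $\bigl(\underline{t}(y)-\underline{t}(\bar{y})\bigr)-\bigl(\overline{t}(y)-\overline{t}(\bar{y})\bigr)$ and the analogous quantity for $\textbf{H}$, and use the valid-interval constraints $\underline{t}\leq\overline{t}$, $\underline{h}\leq\overline{h}$ to resolve the resulting ambiguities, yielding both target scalar inequalities.

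The main obstacle will be precisely this endpoint case analysis in the converse. A naive interval-level addition of the two subgradient inequalities fails because $\widehat{\textbf{S}}\oplus\bigl((-1)\odot\widehat{\textbf{S}}\bigr)$ is a symmetric non-degenerate interval rather than $\widehat{\textbf{0}}$, and the remark following Lemma \ref{00} shows that $gH$-difference does not distribute over $\oplus$; hence no clean subdifferential sum rule of the form $\partial(\textbf{T}\oplus\textbf{H})(\bar{y})=\partial\textbf{T}(\bar{y})\oplus\partial\textbf{H}(\bar{y})$ is available. One therefore must reduce to the real-valued setting via Lemma \ref{lc1}, and the fact that the anti-symmetric pairing of $\nabla\textbf{T}(\bar{y})$ with $(-1)\odot\nabla\textbf{T}(\bar{y})$ combined with the valid-interval constraints $\underline{t}\leq\overline{t}$, $\underline{h}\leq\overline{h}$ makes the four-way case split consistent is the key subtlety.
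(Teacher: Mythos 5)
Your necessity argument is essentially the paper's: the same segment $y_\delta=(1-\delta)\bar y+\delta y$, the same use of convexity of $\textbf{H}$ followed by Lemma \ref{00}(\ref{3_3}) to isolate $\frac{1}{\delta}\odot(\textbf{T}(\bar y)\ominus_{gH}\textbf{T}(y_\delta))$, and the same passage to the limit; that half is fine.

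The converse is where the proposal breaks down. You plan to combine the two pointwise inequalities $\widehat{\textbf{S}}\preceq\textbf{T}(y)\ominus_{gH}\textbf{T}(\bar y)$ and $(-1)\odot\widehat{\textbf{S}}\preceq\textbf{H}(y)\ominus_{gH}\textbf{H}(\bar y)$ by an endpoint case analysis and assert that the constraints $\underline t\leq\overline t$, $\underline h\leq\overline h$ resolve it. They do not. Write $a=\underline t(y)-\underline t(\bar y)$, $b=\overline t(y)-\overline t(\bar y)$, $c=\underline h(y)-\underline h(\bar y)$, $d=\overline h(y)-\overline h(\bar y)$, $\widehat{\textbf{S}}=[\underline s,\overline s]$. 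The hypotheses read $\underline s\leq\min\{a,b\}$, $\overline s\leq\max\{a,b\}$, $-\overline s\leq\min\{c,d\}$, $-\underline s\leq\max\{c,d\}$, and the target is $a+c\geq 0$ and $b+d\geq 0$. Take $\underline s=-1$, $\overline s=1$, $a=c=-1$, $b=d=1$ (realized, e.g., by $\textbf{T}(\bar y)=\textbf{H}(\bar y)=[0,0]$ and $\textbf{T}(y)=\textbf{H}(y)=[-1,1]$, all valid intervals): every hypothesis holds, yet $a+c=-2$. The only cancellation available is $a+c\geq\underline s-\overline s$, which is useless precisely when $\nabla\textbf{T}(\bar y)$ is non-degenerate, so no finer bookkeeping at the single point $y$ can close the argument.

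In fact the converse statement itself fails under the paper's definitions, so this step cannot be repaired. On $\mathcal{Y}=\mathbb{R}$ take $\textbf{T}(y)=[-y,\ y^2-y]$ and $\textbf{H}(y)=[0,\ 2|y-1|]$, both proper and convex with full domain, and $\bar y=1$. A direct computation gives $\nabla\textbf{T}(1)=[-1,1]$, and $(-1)\odot\nabla\textbf{T}(1)=[-1,1]\in\partial\textbf{H}(1)$ because $(y-1)\odot[-1,1]=[-|y-1|,\ |y-1|]\preceq[0,\ 2|y-1|]=\textbf{H}(y)\ominus_{gH}\textbf{H}(1)$ for all $y$; yet $\textbf{T}(1)\oplus\textbf{H}(1)=[-1,0]\npreceq[-2,4]=\textbf{T}(2)\oplus\textbf{H}(2)$, so $\bar y=1$ is not weak efficient. (The paper's own converse proof fares no better: it negates weak efficiency as ``$\textbf{P}(y')\preceq\textbf{P}(\bar y)$ for some $y'$,'' which ignores incomparable pairs, and its final ``contradiction'' amounts only to two intervals dominating each other.) You correctly identified $\widehat{\textbf{S}}\oplus((-1)\odot\widehat{\textbf{S}})\neq\widehat{\textbf{0}}$ as the obstruction, but the right conclusion is that sufficiency needs an additional hypothesis (degeneracy of $\nabla\textbf{T}(\bar y)$, or comparability of $\textbf{P}(\bar y)$ with every $\textbf{P}(y)$), not that the four-way case split is consistent.
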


\begin{proof}
Let $\bar{y}, y \in \ $dom($\textbf{H}$) and $\delta \in (0,1)$ such that $y_{\delta} = (1-\delta)\bar{y} + \delta y  \in$ dom($\textbf{H}$). \\
Since $\bar{y}$ is a weak efficient solution of (\ref{0_7}), 
\begin{eqnarray*}
   &&\textbf{P}(\bar{y})~\preceq~\textbf{P}(y_{\delta})\\
   &\implies&  \textbf{T}(\bar{y})\oplus\textbf{H}(\bar{y})~\preceq~\textbf{T}(y_{\delta}) \oplus\textbf{H}(y_{\delta}) \\
   &\implies&\textbf{T}(\bar{y})\oplus\textbf{H}(\bar{y})~\preceq~\textbf{T}((1-\delta)\bar{y} + \delta y) \oplus \textbf{H}( \delta y + (1-\delta)\bar{y} ).
\end{eqnarray*}
Due to convexity of $\textbf{H}$ and (\ref{3_5}) of Lemma \ref{00}, we have
\begin{eqnarray*}\label{equ_19}
   &&\textbf{T}(\bar{y})\oplus\textbf{H}(\bar{y})~\preceq~\textbf{T}((1-\delta)\bar{y} + \delta y) \oplus \delta\odot \textbf{H}(y) \oplus (1-\delta)\odot\textbf{H}(\bar{y})\nonumber\\
   &\implies&\textbf{T}(\bar{y}) \oplus \delta \odot\textbf{H}(\bar{y})~\preceq~\textbf{T}((1-\delta)\bar{y} + \delta y) \oplus \delta\odot \textbf{H}(y)~\text{by Note~\ref{notei}}\\
  &\implies& \frac{1}{\delta}\odot\big(\textbf{T}(\bar{y})\ominus_{gH}\textbf{T}((1-\delta)\bar{y}+\delta y)\big)~\preceq~\textbf{H}(y) \ominus_{gH} \textbf{H}(\bar{y})~\text{by (\ref{3_3}) of Lemma~\ref{00}}\\
  &\implies& (-1)\odot\frac{1}{\delta}\odot\bigg(\textbf{T}((1-\delta)\bar{y} + \delta y)\ominus_{gH}\textbf{T}(\bar{y})\bigg)~\preceq~\textbf{H}(y) \ominus_{gH} \textbf{H}(\bar{y}).
\end{eqnarray*}
 Since $\textbf{T}$ is $gH$-differentiable, as $\delta \to 0^+$, we have
\begin{eqnarray*}
  && (y-\bar{y})^\top \odot ( (-1) \odot \nabla \textbf{T}(\bar{y}) ) \preceq \textbf{H}(y) \ominus_{gH} \textbf{H}(\bar{y})\\
  &\Longrightarrow&(-1) \odot \nabla \textbf{T}(\bar{y}) \in \partial \textbf{H}(\bar{y}).
\end{eqnarray*}
To prove the latter part, we assume that $\textbf{T}$ is convex on $\mathcal{Y}$ and $(-1) \odot \nabla \textbf{T}(\bar{y}) \in \partial \textbf{H}(\bar{y})$. If possible, let $\bar{y}$ is not a weak efficient point of the problem (\ref{0_7}). Then, there exists a point $y{'} \in$ dom($\textbf{H}$) such that $
   \textbf{P}(y{'})~\preceq~\textbf{P}(\bar{y}).$ 
Therefore, for any $\delta, \delta' \in (0,1)$ with $\delta+\delta'=1$, we have
\begin{align*}
   &\delta\odot\textbf{P}(y{'})~\preceq~\delta\odot\textbf{P}(\bar{y})\\
   \text{or, }~&\delta\odot\textbf{P}(y{'})\oplus\delta{'}\odot\textbf{P}(\bar{y})~\preceq~\delta\odot\textbf{P}(\bar{y})\oplus\delta{'}\odot\textbf{P}(\bar{y})\\
   \text{or, }~&\delta\odot\textbf{P}(y{'})\oplus\delta{'}\odot\textbf{P}(\bar{y})~\preceq~(\delta\oplus\delta{'})\odot\textbf{P}(\bar{y})~=~\textbf{P}(\bar{y}).
\end{align*}
Due to convexity of $\textbf{T}$ and $\textbf{H}$ on $\mathcal{Y}$, we have
\begin{eqnarray*}
    &&\textbf{T}(\delta y{'}+\delta{'}\bar{y})\oplus\delta\odot\textbf{H}(y{'})\oplus\delta{'}\odot\textbf{H}(\bar{y})\\
    &\preceq&\delta\odot\textbf{T}(y{'})\oplus\delta{'}\odot\textbf{T}(\bar{y})\oplus\delta\odot\textbf{H}(y){'}\oplus\delta{'}\odot\textbf{H}(\bar{y})\\
    &\preceq&\textbf{T}(\bar{y})\oplus \textbf{H}(\bar{y}).
\end{eqnarray*}
Thus, from (\ref{3_3}) of Lemma \ref{00}, we get
\begin{eqnarray*}
&& \textbf{T}(\bar{y}+\delta(y{'}-\bar{y}))\oplus\delta\odot\textbf{H}(y{'})~\preceq~\textbf{T}(\bar{y})\oplus \delta\odot\textbf{H}(\bar{y})\\
&\text{or,}& \textbf{H}(y{'}) \ominus_{gH}\textbf{H}(\bar{y})~\preceq~\frac{1}{\delta}\odot(\textbf{T}(\bar{y})\ominus_{gH}\textbf{T}(\bar{y}+\delta(y{'}-\bar{y}))\\
 &\text{or,}&\textbf{H}(y{'})\ominus_{gH}\textbf{H}(\bar{y})~\preceq~(-1)\odot\frac{1}{\delta}\odot(\textbf{T}(\bar{y}+\delta(y{'}-\bar{y})).
\end{eqnarray*}
This is a contradiction to the asuumption that  $(-1) \odot \nabla \textbf{T}(\bar{y})\in \partial \textbf{H}(\bar{y})$ for all $y \in \mathcal{Y}$.\\
Hence, $\bar{y}$ is a weak efficient point of $\textbf{T}$.
\end{proof}

\begin{note}\label{no2}
The converse of Theorem \ref{non1} is not true if $\textbf{T}$ is a nonconvex IVF. For instance, consider the IOP
\begin{eqnarray}\label{pka}
\underset{y \in [-5,5]}{\inf}~\textbf{T}(y) \oplus \textbf{H}(y),\end{eqnarray}
where $\textbf{T},\textbf{H}:[-5,5] \to I(\mathbb{R})$ are defined by
\[
\textbf{T}(y) = [2,4] \odot y^{3} \oplus [1,1]~\text{and}~\textbf{H}(y) = [3,3].
\]
Therefore,
\[
\underline{t}(y)=\begin{cases}2y^3+1, & \text{for } y \geq 0 \\
4y^3+1, & \text{for } y < 0,
\end{cases}
~\text{and}~\overline{t}(y)=\begin{cases}
4y^3+1, & \text{for } y \geq 0 \\
2y^3+1,  & \text{for } y < 0.
\end{cases}
\]

\begin{figure} 
\begin{center}	
\begin{tikzpicture}[scale=1.2]
\begin{axis}[domain=-1:1,xmin=-1.2, xmax=1.2, ymin=-3.5, ymax=5.5, axis x line=middle, axis y line=middle, xlabel=$y$, ylabel=$T$, 
xtick = {-1, -0.5, ..., 1}, 
minor xtick = {-0.75, -0.5, ..., 1},
ytick = {-2, 0, ..., 4}, 
minor ytick = {-2, -1, ..., 4}, 
no markers]
\addplot+[name path=A,red]{2*x^3+1};
\addplot+[name path=B,red]{4*x^3+1};
\addplot[mylightblue] fill between[of=A and B];
\draw[color=black]  (41,350) -- (57, 350);

\draw (112,318) node {$O$};

\draw[->]  (41,490) -- (55, 400);
\draw (39,500) node {$\overline{t}$};

\draw[->]  (161,615) -- (175, 525);
\draw (159,635) node {$\overline{t}$};

\draw[->]  (55, 160) -- (41,250);
\draw (59,170) node {$\underline{t}$};

\draw[->]  (205, 430) -- (191,520);
\draw (208, 410) node {$\underline{t}$};

\end{axis}
\end{tikzpicture}
\caption{The IVF $\textbf{T}$ of Note \ref{no2}}\label{fexcd2}
\end{center}
\end{figure}

\noindent	
From Lemma \ref{lc1}, it is clear that the IVF $\textbf{T}$ is not convex as $\underline{t}$ and $\overline{t}$ are not convex (refer to Figure \ref{fexcd2}). 
Note that $\textbf{T}$ and $\textbf{H}$ are $gH$-differentiable and 
\[
\nabla \textbf{T}(y) = [6,12] \odot y^{2}~\text{and}~\nabla \textbf{H}(y) = \textbf{0}.
\]
At $\bar{y}=0$, $\partial \textbf{H}(\bar{y})=\{ \nabla \textbf{H}(\bar{y}) \}=\{\textbf{0}\}$, and $\nabla \textbf{T}(\bar{y}) = \textbf{0}$. 
Hence, $(-1) \odot \nabla \textbf{T}({0}) \in \partial \textbf{H}({\bar{y}})$. However, $\bar{y}$ is not an efficient point of the IOP (\ref{pka}) (refer to Figure \ref{fexcd2}). 
\end{note}

In the below Examples \ref{empl1} and \ref{empl2}, we exemplify Theorem \ref{non1} by considering some special cases for $\textbf{H}$.

\begin{dfn} {(Indicator function for IVF)}. 
Let $\mathcal{Y}$ be a subset of $\mathbb{R}^{n}.$ Then, the indicator function $ \boldsymbol{\delta}_{\mathcal{Y}}:\mathbb{R}^{n}\rightarrow \overline{I(\mathbb{R})}$ at a point $y$, is defined as 
\[
 \boldsymbol{\delta}_{\mathcal{Y}}(y) =
\begin{cases}
 \textbf{0}, & \text{if}~ y \in \mathcal{Y}\\
  \boldsymbol{+\infty}, & \text{if}~ y \notin \mathcal{Y}.
\end{cases}
\]
\end{dfn}

\begin{example}{(Convex constrained nonconvex programming problem for IVF).}\label{empl1} Let $\mathcal{Y}$ be a nonempty convex subset of $\mathbb{R}^n$. Let $\textbf{T} : \mathcal{Y} \rightarrow I(\mathbb{R})\cup \{\boldsymbol{+\infty}\} $ be a nonconvex $gH$-differentiable IVF and $\textbf{H}:\mathcal{Y} \rightarrow \overline{I(\mathbb{R})}$ be a convex IVF which is defined by $\textbf{H}(y) = \boldsymbol{\delta}_{\mathcal{Y}}(y)$.
Consider the IOP
\begin{eqnarray}\label{0_9}
&& \underset{y \in \mathcal{Y}}{\inf}~\textbf{T}(y)\oplus \textbf{H}(y).
\end{eqnarray}
If $\bar{y} \in \mathcal{Y}$ is a weak efficient point of (\ref{0_9}), then from Theorem \ref{non1}, we have 
\[
(-1) \odot \nabla \textbf{T}(\bar{y}) \in  \partial \textbf{H}(\bar{y}) \implies(-1) \odot \nabla \textbf{T}(\bar{y}) \in \partial \boldsymbol{\delta}_{\mathcal{Y}}(\bar{y}).
\]
We observe that if $\textbf{G}\in I(\mathbb{R})$ is a subgradient of $\boldsymbol{\delta}_{\mathcal{Y}}$ at $\bar{y}\in \mathcal{Y},$ then for all $y\in \mathcal{Y}$
\begin{eqnarray*}
&&(y-\bar{y})^\top\odot\textbf{G} \preceq \boldsymbol{\delta}_{\mathcal{Y}}(y)\ominus_{gH} \boldsymbol{\delta}_{\mathcal{Y}}(\bar{y})\\
&\implies& (y-\bar{y})^\top\odot\textbf{G} \preceq \textbf{0}.
\end{eqnarray*}
Therefore, we have
\[
 (-1) \odot (y-\bar{y})^\top\odot \nabla \textbf{T}(\bar{y}) \preceq \textbf{0}.
\]
\end{example}
%
%
\begin{example}\label{empl2} Let the generic element of $\mathbb{R}^n$ be $y = (y_1, y_2, \ldots, y_n)^\top$ and $p \in \{1, 2, \ldots, n\}$.  Let $\textbf{T} : \mathbb{R}^n \rightarrow I(\mathbb{R}) \cup \{+\boldsymbol{\infty}\}$ be a $gH$-differentiable IVF and $\textbf{H} : \mathbb{R}^n \rightarrow  I(\mathbb{R}) \cup \{\boldsymbol{+\infty}\}$ be a convex IVF. Consider the IOP
\begin{eqnarray}\label{1_0}
&& \underset{y \in \mathcal{Y}}{\inf}~\textbf{T}(y)\oplus  \textbf{H}(y)
\end{eqnarray}
where $\textbf{0} \preceq \textbf{C}$ and $\textbf{H}(y) = \textbf{C} \odot |y_p |$. \\
If $\bar{y} \in \mathcal{Y}$ is a weak efficient point of (\ref{1_0}), then by  Theorem \ref{non1}, we have 
\[
(-1) \odot \nabla \textbf{T}(\bar{y}) \in \partial \textbf{H}(\bar{y}).
\]
From \cite{Ghosh2020lasso}, the $gH$-subdifferential set of $\textbf{H}$ at any  $\bar{y}=(\bar{y}_1,\bar{y}_2,\ldots,\bar{y}_{p-1},0,\bar{y}_{p+1},\ldots,\bar{y}_n)^\top$ in the plane $y_p = 0$ is given by 
\[
\{(\textbf{G}_1,\textbf{G}_2,\ldots,\textbf{G}_n)^\top \in I(\mathbb{R})^n : (-1)\odot \textbf{C} \preceq \textbf{G}_j \preceq \textbf{C}, \text{ for all } j = 1, 2, \ldots, n\}.
\] Therefore,
\[
	\nabla\textbf{T}(\bar{y})=(D_1\textbf{T}(\bar{y}),D_2\textbf{T}(\bar{y}),\ldots,D_n\textbf{T}(\bar{y}))^\top,\]
is given by 
\begin{eqnarray}\label{crst} 
{D}_{j}{\textbf{T}}(\bar{y}) =
\begin{cases} \label{}
(-1)\odot \textbf{C} & \text{if }\bar{y}_{j} < 0,\\
~ \textbf{C} & \text{if }\bar{y}_{j} > 0,\\
\textbf{G}_i \in I(\mathbb{R}): (-1)\odot \textbf{C} \preceq \textbf{G}_j \preceq \textbf{C}  & \text{if }\bar{y}_{j} = 0.
\end{cases}
\end{eqnarray}
for each $j = 1, 2, \ldots, n$. Thus, (\ref{crst}) is a necessary condition for $\bar{y}$ to be a weak efficient point of the IOP (\ref{1_0}). 
\end{example}
%
%
%

\section{Conclusion and future directions}\label{section5}
In this article, three major results on IVFs and IOPs have been derived---$gH$-directional derivative of the maximum of IVFs (Theorem \ref{dd_max}), Fritz-John-type necessary efficiency condition (Theorem \ref{fri}), and KKT-type necessary and sufficient efficiency condition for IOPs (Theorem \ref{kkeff}). To derive these results, we have defined and analyzed the concepts of infimum (Definition \ref{inf}), supremum (Definition \ref{sup}), closedness (Definition \ref{closedset}), boundedness (Definition \ref{boundedset}), and convex hull (Definition \ref{cohull}) in $I(\mathbb{R})$; also, we have derived some properties related to these concepts. One can trivially notice that in the degenerate case, Definitions \ref{inf} and \ref{sup} reduces to the respective conventional definition for the real-valued functions (see \cite{amir,dhara}). By using the proposed  calculus for IVFs, a characterization of the efficient solutions of a nonconvex composite model with IVFs (Theorem \ref{non1}) has been derived. \\

In connection with the proposed research, few future directions are as follows:
\begin{itemize}[$\bullet$]
\item  A $gH$-subgradient method for unconstrained nonconvex nonsmooth IOPs can be attempted to be developed.
\item  The convexity properties of the optimal value function can be analyzed under inequality and equality constraints.
\item  We may attempt to derive a sufficient conditions which ensures the existence of a subgradient of the value function associated with convex IOPs. Towards this direction, one may consider the following IOP:  
\begin{align*}
\text{min}~&~\textbf{F}(y)\\
\text{subject to}~&~\textbf{G}_{j}(y) \preceq \textbf0,~j=1,2,\ldots,m\\
~&~\textbf{H}_{l}(y)= \textbf0,~l=1,2,\ldots,p \\
~&~y \in \mathcal{Y},
\end{align*}
where $\textbf{F},\textbf{G}_{j},\textbf{H}_l:\mathcal{Y} \to I(\mathbb{R})\cup \{\boldsymbol{+\infty}\}$ are  extended convex IVFs. The value function with above problem is the IVF $\textbf{V} : \mathbb{R}^m\times \mathbb{R}^p\to \overline{I(\mathbb{R})} $, which may be defined as follows: 
\[
\textbf{V}(u,t)=\underset{y \in \mathcal{Y}}{\text{inf}} \{\textbf{F}(y):\textbf{G}_{j}(y)\preceq [u_j,u_j],~~\textbf{H}_{l}(y)= [t_l,t_l] \text{ for all } j \text{ and } l\}.\]
For such a value function, one may attempt to develop the subdifferential set with the help of the proposed calculus rules of this articles and try to analyze its sensitivity with variational inequality constraints.
\end{itemize}



%

\appendix

\section{Proof of Lemma \ref{00}} \label{appendix_inq}
\begin{proof}
 Let $\textbf{X}=[\underline{x},\overline{x}],~ \textbf{Y}=[\underline{y},\overline{y}],~ \textbf{Z}=[\underline{z},\overline{z}],~\text{and}~\textbf{W}=[\underline{w},\overline{w}]$.
\begin{enumerate}[(i)]



  \item Since $\textbf{X} \oplus \textbf{Y} \preceq \textbf{Z} \oplus \textbf{W}$,
  \begin{align}\label{ineq_11}
  & \underline{x}+\underline{y}~\preceq~\underline{z}+\underline{w}~\text{and}~\overline{x}+\overline{y}~\preceq~\overline{z}+\overline{w}\nonumber\\
  \implies& \underline{x}-\underline{z}~\preceq~\underline{w}-\underline{y}~\text{and}~\overline{x}-\overline{z}~\preceq~\overline{s}-\overline{y}\nonumber\\
  \implies& \text{min}\{\underline{x}-\underline{z},\overline{x}-\overline{z}\}~\preceq~\text{min}\{\underline{w}-\underline{y},\overline{w}-\overline{y}\},~\text{and} \nonumber\\
  &\text{max}\{\underline{x}-\underline{z},\overline{x}-\overline{z}\}~\preceq~\text{max}\{\underline{w}-\underline{y},\overline{w}-\overline{s}\}. 
  \end{align}
  Hence, $\textbf{X} \ominus_{gH} \textbf{Z}~\preceq~\textbf{W} \ominus_{gH} \textbf{Y}$.


\item 
Since $\textbf{X}\preceq\textbf{Y}$ and $\textbf{Y}\preceq\textbf{Z}$, we have $\underline{x}\leq\underline{y},~ \overline{x}\leq\overline{y}$ and $\underline{y}\leq\underline{z},~ \overline{y}\leq\overline{z}$.\\
This implies $\underline{x}\leq\underline{z}~\text{and}~\overline{x}\leq\overline{z}, \text{ i.e., }\textbf{X}\preceq\textbf{Z}.$

\end{enumerate}
\end{proof}


\section{Proof of Lemma \ref{infsup}} \label{appendix_insu}
\begin{proof}
Let $\textbf{B}_{1}=\{\textbf{T}(\widehat{\textbf{X}}):\widehat{\textbf{X}}\in\textbf{S}_{1}\}$, $\textbf{B}_{2}=\{\textbf{T}(\widehat{\textbf{X}}):\widehat{\textbf{X}}\in\textbf{S}_{2}\}$, and $\textbf{B}=\{\textbf{T}(\widehat{\textbf{X}}):\widehat{\textbf{X}}\in\textbf{S}\}$.\\
\begin{enumerate}[(i)]
\item From Definition \ref{inf}, we have
   \begin{eqnarray*}
    \underset{\textbf{S}_{2}}{\inf}~\textbf{T}= \text{inf}\{\textbf{T}(\widehat{\textbf{X}}):\widehat{\textbf{X}}\in\textbf{S}_{2}\}=\text{inf}~\textbf{B}_{2}=\overline{\textbf{M}},\text{ where }\overline{\textbf{M}}\in I(\mathbb{R}).
    \end{eqnarray*}
    Since $\overline{\textbf{M}}$ is a lower bound of $\textbf{B}_{2}$ and $\textbf{S}_{1} \subseteq \textbf{S}_{2}$, $\overline{\textbf{M}}$ is also a lower bound of $\textbf{B}_{1}$.  This implies that
    \begin{eqnarray*}
     \overline{\textbf{M}}\preceq\underset{\textbf{S}_{1}}{\inf}~\textbf{T}
    \implies \underset{\textbf{S}_{2}}{\inf}~\textbf{T}\preceq\underset{\textbf{S}_{1}}{\inf}~\textbf{T}.
    \end{eqnarray*}
    

\item 
From Definition \ref{sup}, we have
   \begin{eqnarray*}
   \underset{\textbf{S}_{2}}{\sup}~\textbf{T}= \text{sup}\{\textbf{T}(\widehat{\textbf{X}}):\widehat{\textbf{X}}\in\textbf{S}_{2}\}=\text{inf}~\textbf{B}_{2}=\overline{\textbf{N}},\text{ where }\overline{\textbf{N}}\in I(\mathbb{R}).
    \end{eqnarray*}
    Since $\overline{\textbf{N}}$ is an upper bound of $\textbf{B}_{2}$ and $\textbf{S}_{1} \subseteq \textbf{S}_{2}$, therefore $\overline{\textbf{N}}$ is also an upper bound of $\textbf{B}_{1}$. This implies that
    \begin{eqnarray*}
    &&\underset{\textbf{S}_{1}}{\sup}~\textbf{T}\preceq\overline{\textbf{M}}
    \implies \underset{\textbf{S}_{1}}{\sup}\textbf{T}\preceq\underset{\textbf{S}_{2}}{\sup}~\textbf{T}.
    \end{eqnarray*}


\item 
From Definition \ref{inf}, we have
  \begin{eqnarray*}
  \underset{\textbf{S}}{\inf}~\textbf{T}=\text{inf}\{\textbf{T}(\widehat{\textbf{X}}):\widehat{\textbf{X}}\in\textbf{S}\}=\text{inf}~\textbf{B}=\overline{\textbf{M}}.
  \end{eqnarray*}
   Thus, for each $\widehat{\textbf{X}}\in\textbf{S}$ and for every $\delta \geq$ 0, we get 
      \begin{eqnarray}\label{equa_b1}
      \overline{\textbf{M}}\preceq\textbf{T}(\widehat{\textbf{X}})
      \implies\delta\odot\overline{\textbf{M}}\preceq\delta\odot\textbf{T}(\widehat{\textbf{X}}) 
      \implies \delta\odot\overline{\textbf{M}}\preceq\underset{\textbf{S}}{\inf}~(\delta\odot\textbf{T}).
      \end{eqnarray}
    Since $\overline{\textbf{M}}$ is an infimum of $\textbf{B}$, for given $\epsilon >0$ and $\delta > 0$, we have 
      \begin{eqnarray*}
      &&\textbf{T}(\widehat{\textbf{X}}_{1})\prec\overline{\textbf{M}}\oplus\left[\tfrac{\epsilon}{\delta},\tfrac{\epsilon}{\delta}\right]~\text{for some~}\widehat{\textbf{X}}_{1}\in\textbf{S}\\
      \implies&&\delta\odot\textbf{T}(\widehat{\textbf{X}}_{1})\prec\delta\odot\left(\overline{\textbf{M}}\oplus\left[\tfrac{\epsilon}{\delta},\tfrac{\epsilon}{\delta}\right]\right)\\
        \implies&&\delta\odot\textbf{T}(\widehat{\textbf{X}}_{1})\prec(\delta\odot\overline{\textbf{M}})\oplus[\epsilon,\epsilon].
       \end{eqnarray*}
       Due to arbitrariness of $\epsilon$, any interval $\textbf{C}\in I(\mathbb{R})$ such that $\delta\odot\overline{\textbf{M}}\prec\textbf{C}$ cannot be a lower bound of $\delta\odot\textbf{B}$. Therefore,
        \begin{eqnarray} \label{equa_b2}
        \underset{\textbf{S}}{\text{inf}}(\delta\odot\textbf{T})\preceq\delta\odot\overline{\textbf{M}}.
         \end{eqnarray}
         From (\ref{equa_b1}) and (\ref{equa_b2}), we obtain
 $\underset{\textbf{S}}{\inf}~(\delta\odot\textbf{T})~=~\delta\odot\underset{\textbf{S}}{\inf}~\textbf{T}.$


\item 
From Definition \ref{sup}, we have
   \begin{eqnarray*}
  \underset{\textbf{S}}{\sup}~\textbf{T}=\text{sup}\{\textbf{T}(\widehat{\textbf{X}}):\widehat{\textbf{X}}\in\textbf{S}\}=\text{sup}~\textbf{B}= \overline{\textbf{N}}.
  \end{eqnarray*}
     Thus, for each $\widehat{\textbf{X}}\in\textbf{S}$ and $\delta \geq 0$, we get
      \begin{eqnarray}\label{eqa_1}
      \textbf{T}(\widehat{\textbf{X}})\preceq\overline{\textbf{N}}
      &\implies&\delta\odot\textbf{T}(\widehat{\textbf{X}})\preceq\delta\odot\overline{\textbf{N}}
     \implies \underset{\textbf{S}}{\sup}~(\delta\odot\textbf{T})\preceq\delta\odot\overline{\textbf{N}}.
      \end{eqnarray}
      Since $\overline{\textbf{N}}$ is a supremum of $\textbf{B}$, for given $\epsilon >0$ and $\delta >0$, we have
      \begin{eqnarray*}
      &&\overline{\textbf{N}}\ominus_{gH}\left[\tfrac{\epsilon}{\delta},\tfrac{\epsilon}{\delta}\right]\prec\textbf{F}(\widehat{\textbf{X}}_{1})~\text{for some~}\widehat{\textbf{X}}_{1}\in\textbf{S}\\
      \implies&&\delta\odot\left(\overline{\textbf{N}}\ominus_{gH}\left[\tfrac{\epsilon}{\delta},\tfrac{\epsilon}{\delta}\right]\right)\prec\delta\odot\textbf{T}(\widehat{\textbf{X}}_{1})\\
        \implies&&\delta\odot\overline{\textbf{N}}\ominus_{gH}[\epsilon,\epsilon]\prec\delta\odot\textbf{T}(\widehat{\textbf{X}}_{1}).
       \end{eqnarray*}
       Due to arbitrariness of $\epsilon$, any interval $\textbf{C}\in I(\mathbb{R})$ such that $\textbf{C}\prec\delta\odot\overline{\textbf{N}}$ cannot be an upper bound of $\delta\odot\textbf{B}$. Therefore,
        \begin{eqnarray} \label{eqa_2}
        \delta\odot\overline{\textbf{N}}\preceq\underset{\textbf{S}}{\text{sup}}(\delta\odot\textbf{T}).
         \end{eqnarray}
         In view of (\ref{eqa_1}) and (\ref{eqa_2}), we obtain $\underset{\textbf{S}}{\sup}~(\delta\odot\textbf{T})~=~\delta\odot\underset{\textbf{S}}{\sup}~\textbf{T}.$
  \end{enumerate}

\end{proof}


%
\section{Proof of Lemma \ref{supinf}} \label{appendix_suprinfm}
\begin{proof}
\begin{enumerate}[(i)]
\item From Definitions \ref{lbound} and \ref{inf}, for each $\widehat{\textbf{X}}\in\textbf{S}$ we have
\begin{eqnarray*}
   &&\underset{\textbf{S}}{\text{inf}}~\textbf{T}_1\preceq\textbf{T}_1(\widehat{\textbf{X}})
    ~\text{and}~\underset{\textbf{S}}{\text{inf}}~\textbf{T}_2\preceq\textbf{T}_2(\widehat{\textbf{X}})\\
 \implies&&\underset{\textbf{S}}{\text{inf}}~\textbf{T}_1\oplus\underset{\textbf{S}}{\text{inf}}~\textbf{T}_2\preceq\textbf{T}_1(\widehat{\textbf{X}})\oplus\textbf{T}_2(\widehat{\textbf{X}})~\text{from Lemma 2.5 of \cite{gourav2020}}.
   \end{eqnarray*}
   Since $\underset{\textbf{S}}{\inf}~(\textbf{T}_1 \oplus \textbf{T}_2)$ is the infimum of $\textbf{T}_1(\widehat{\textbf{X}})\oplus\textbf{T}_2(\widehat{\textbf{X}})~~\text{for each}~\widehat{\textbf{X}}\in\textbf{S}$,
   \[\underset{\textbf{S}}{\text{inf}}~\textbf{T}_1\oplus\underset{\textbf{S}}{\text{inf}}~\textbf{T}_2\preceq\underset{\textbf{S}}{\inf}~(\textbf{T}_1 \oplus \textbf{T}_2).\]
   

   \item From Definitions \ref{ubound} and \ref{sup}, for each $\widehat{\textbf{X}}\in\textbf{S}$ we have
   \begin{eqnarray*}
   &&\textbf{T}_1(\widehat{\textbf{X}})\preceq\underset{\textbf{S}}{\text{sup}}~\textbf{T}_1~
    \text{and}~\textbf{T}_2(\widehat{\textbf{X}})\preceq\underset{\textbf{S}}{\text{sup}}~\textbf{T}\\
    \implies&&\textbf{T}_1(\widehat{\textbf{X}})\oplus\textbf{T}_2(\widehat{\textbf{X}})\preceq\underset{\textbf{S}}{\text{sup}}~\textbf{T}_1\oplus\underset{\textbf{S}}{\text{sup}}~\textbf{T}_2~\text{from Lemma 2.5 of \cite{gourav2020}}.
   \end{eqnarray*}
   Since $\underset{\textbf{S}}{\sup}~(\textbf{T}_1 \oplus \textbf{T}_2)$ is the supremum of $\textbf{T}_1(\widehat{\textbf{X}})\oplus\textbf{F}_2(\widehat{\textbf{X}})~~\text{for each}~\widehat{\textbf{X}}\in\textbf{S}$,  we have
   \[\underset{\textbf{S}}{\sup}~(\textbf{T}_1 \oplus \textbf{T}_2)\preceq\underset{\textbf{S}}{\text{sup}}~\textbf{T}_1\oplus\underset{\textbf{S}}{\text{sup}}~\textbf{T}_2.\]

\end{enumerate}
\end{proof}


\section*{Acknowledgement}
Radko Mesiar acknowledges the financial support of the grant APVV-18-0052 of the Slovak Research and Development Agency and
the grant Palacky University Olomouc IGAPrF2021.


\begin{thebibliography}{99}



\bibitem{bagirov} Bagirov, A. M., Jin, L., Karmitsa, N., Al Nuaimat, A., and Sultanova, N. (2013), Subgradient method for nonconvex nonsmooth optimization, \emph{Journal of Optimization Theory and Applications}, 157(2), 416–435.


\bibitem{amir} Beck, A. (2017), \emph{First-Order Methods in Optimization}, Society for Industrial and Applied Mathematics.


\bibitem{burjee2012} Bhurjee, A. K. and Panda, G. (2012), Efficient solution of interval optimization problem, \emph{Mathematical Methods of Operations Research}, 76(3), 273–288.


\bibitem{chalco2012} Chalco-Cano, Y., Lodwick, W.A. and Rufi\'an-Lizana, A. (2013), Optimality conditions of type KKT for optimization problems with interval-valued objective function via generalized derivative, \emph{Fuzzy Optimization and Decision Making}, 12(3), 305-322.

\bibitem{chalco2013} Chalco-Cano, Y., Rufi\'an-Lizana, A., Rom\'an-Flores H., and Jim\'enez-Gamero M. D. (2013), Calculus for interval-valued functions using generalized Hukuhara derivative and
applications, \emph{Fuzzy Sets and Systems}, 219, 49–67.







\bibitem{dhara} Dhara, A., Dutta, J. (2011), \emph{Optimality Conditions in Convex Optimization}, a finite-dimensional view, CRC Press Taylor and Francis Group, New York. 






\bibitem{Ghosh2020lasso}Debnath, A. K., Ghosh, D., Mesiar, R., and Chauhan, R. S. (2021). Generalized-Hukuhara Subgradient and its Application in Optimization Problem with Interval-valued Functions, arXiv:2104.07497.  


\bibitem{Ghosh2016newton} Ghosh, D. (2017), Newton method to obtain efficient solutions of the optimization problems
with interval-valued objective functions, \emph{Journal of Applied Mathematics and Computing}, 53(1-2), 709–731.


\bibitem{Ghosh2017quasi} Ghosh, D. (2017), A quasi-Newton method with rank-two update to solve interval optimization problems, \emph{International Journal of Applied and Computational Mathematics}
3(3), 1719–1738.


\bibitem{Ghosh2019derivative} Ghosh, D., Chauhan, R. S., Mesiar, R., and Debnath, A. K. (2020), Generalized Hukuhara
G\^ateaux and Fr\'echet derivatives of interval-valued functions and their application in optimization with interval-valued functions, \emph{Information Sciences}, 510, 317--340.


\bibitem{Ghosh2018saadle} Ghosh, D., Ghosh, D., Bhuiya, S. K., and Patra, L. K. (2018), A saddle point characterization
of efficient solutions for interval optimization problems, \emph{Journal of Applied Mathematics and Computing}, 58(1), 193–217.


\bibitem{Ghosh2019gradient} Ghosh, D., Debnath, A. K., Chauhan, R. S., and Castillo, O. (2020). Generalized-Hukuhara-Gradient efficient-direction method to solve optimization problems with interval-valued functions and its application in least squares problems, arXiv: 2011.10462.




\bibitem{Ghosh2019kkt} Ghosh, D., Singh, A., Shukla, K.K., Manchanda, K. (2019), Extended Karush-Kuhn-Tucker condition for constrained interval optimization problems and its application in support vector machines. \emph{Information Sciences}, 504, 276–292.






\bibitem{hukuhara} Hukuhara, M. (1967), Int\'egration des applications mesurables dont la valeur est un compact
convexe, \emph{Funkcialaj Ekvacioj}, 10(3), 205–223.


\bibitem{Ishibuchi1990} Ishibuchi, H. and Tanaka, H. (1990), Multiobjective programming in optimization of the
interval objective function, \emph{European Journal of Operational Research}, 48(2), 219–225.


\bibitem{jayswal} Jayswal, A., Ahmad, I., and Banerjee, J. (2016). Nonsmooth interval-valued optimization and saddle-point optimality criteria. \emph{Bulletin of the Malaysian Mathematical Sciences Society}, 39(4), 1391-1411.





\bibitem{kiwi} Kiwiel, K. C. (1983), An aggregate subgradient method for nonsmooth convex minimization, \emph{Mathematical Programming}, 27(3), 320–341.


\bibitem{gourav2020} Kumar, G., Ghosh, D., Ekeland's variational principle for interval-valued functions,  arXiv 2104.11167.  


\bibitem{liu2007} Liu, S. T. and Wang, R. T. (2007), A numerical solution method to interval quadratic programming, \emph{Applied Mathematics and Computation}, 189(2), 1274–1281.


\bibitem{sunwang} Sun, Y. and Wang, L. (2013) Optimality conditions and duality in nondifferentiable interval-valued programming, \emph{Journal of Industrial and Management Optimization}, 9 (1) 131-142. 

 
\bibitem{markov} Markov, S. (1979), Calculus for interval functions of real variable, \emph{Computing}, 22(4), 325–337.



\bibitem{Moore1966} Moore, R. E. (1966), \emph{Interval Analysis}, Prentice-Hall, Englewood Cliffs, New Jersey.


\bibitem{Moore1979} Moore, R.E. (1979), \emph{Methods and Applications of Interval Analysis}, Society for Industrial and Applied Mathematics.






\bibitem{dsingh} Singh, D., Dar, B.A., Goyal, A. (2014), KKT optimality conditions for interval-valued optimization problems, \emph{Journal of Nonlinear Analysis and Optimization}, 5(2), 91–103.

\bibitem{singh} Singh, D., Dar, B.A., Kim, D.S. (2016), KKT Optimality conditions in interval valued multiobjective programming with generalized differentiable functions, \emph{European Journal of Operation Research}, 254(1), 29–39.




\bibitem{stefanini2009} Stefanini, L. and Bede, B. (2009), Generalized Hukuhara differentiability of interval-valued functions and interval differential equations, \emph{Nonlinear Analysis: Methos and Applications} 71, 1311–1328.

\bibitem{sun} Sun, X. K., and Fu, H. Y. (2014), A note on optimality conditions for dc programs involving composite functions, \emph{Abstract and Applied Analysis}, (2014), 203467, 6 pp.

\bibitem{sunaga} Sunaga, T. (1958), Theory of interval algebra and its application to numerical analysis, \emph{RAAG memoirs}, 2(29-46),209.

\bibitem{wu2007karush} Wu, H.C. (2007), The Karush-Kuhn-Tucker optimality conditions in an optimization problem with interval-valued objective function, \emph{European Journal of Operational Research}, 176(1), 46–59.



\bibitem{zhou} Zhou, H. C., and Wang, Y. J. (2009), Optimality condition and mixed duality for interval-valued optimization. In Cao, B. et al. (eds.) \emph{Fuzzy Information and Engineering}, Volume 2,  pp. 1315--1323. Springer, Berlin, Heidelberg.



\end{thebibliography}
\end{document}